\newtheorem{theorem}{Theorem}
\newtheorem{definition}[theorem]{Definition}
\newtheorem{proposition}[theorem]{Proposition}
\newtheorem{corollary}[theorem]{Corollary}
\newtheorem{lemma}[theorem]{Lemma}
\newcommand{\R}{\mathbb R}
\newcommand{\D}{\left(1-\frac{2M}{r}\right)}
\newcommand{\nablas}{\slashed{\nabla}}
\newcommand{\nablab}{\nabla}
\newcommand{\Div}{\textup{div}}
\numberwithin{theorem}{section}
\numberwithin{equation}{section}
\begin{document} 

\title[linear stability of Schwarzschild in harmonic gauge: odd part]{The linear stability of the Schwarzschild spacetime in the harmonic gauge: odd part}

\author{Pei-Ken Hung}
\address{Pei-Ken Hung\\
Department of Mathematics\\
Columbia University, USA}
\email{pkhung@math.columbia.edu}

\begin{abstract}
In this paper, we study the odd solution of the linearlized Einstein equation on the Schwarzschild background and in the harmonic gauge. With the aid of Regge-Wheeler quantities, we are able to estimate the odd part of Lichnerowicz d'Alembertian equation. In particular, we prove the solution decays at rate $\tau^{-1+\delta}$ to a linearlized Kerr solution.
\end{abstract}
\maketitle

\section{Introduction}
One major open problem of mathematical general relativity is the stability of Kerr spacetimes as solutions of the vacuum Einstein equation
\begin{align}\label{VEE}
Ric(g)=0.
\end{align}
Equation (\ref{VEE}) is a quasilinear weakly hyperbolic system of the metric $g$. The Kerr family is conjectured to be stable for it to be physically relevant. However, in very few special cases the non-linear stability is mathematically understood. The non-linear stability of the Minkowski spacetime was proved in the monumental work of Christodoulou and Klainerman \cite{Christodoulou-Klainerman}. See also \cite{Klainerman-Nicolo, Bieri, Lindblad-Rodnianski, Hintz-Vasy} for various approaches. Recently, non-linear stability of the Schwarzschild spacetime was established by Klainerman and Szeftel \cite{Klainerman-Szeftel} for axial symmetric polarized perturbations. However, general non-linear stability of the Schwarzschild spactime remains open.\\

A simplified model problem is to study the linearlization of equation (\ref{VEE}) on a specific background
\begin{equation}\label{LVEE}
\begin{split}
\delta Ric\Big|_{g}(h)=0,
 \end{split}
\end{equation}
where $g$ is a metric satisfying (\ref{VEE}). A solution $h$ of the equation (\ref{LVEE}) is referred to as metric perturbation and whether $h$ remains bounded or even decays is the problem of linear stability. Because the vacuum Einstein equation (\ref{VEE}) is invariant under the diffeomorphism group, its linearlization (\ref{LVEE}) has a infinite dimensional solution space consisting of deformation tensors. Therefore gauge-invariant quantities play a crucial role in both non-linear and linear stability.\\

The study of metric perturbation on the Schwarzschild background was initiated by Regge and Wheeler \cite{Regge-Wheeler}. The authors derived the Regge-Wheeler equation for the odd solution of (\ref{LVEE}). For even solutions, there is a similar equation discovered by Zerilli \cite{Zerilli}. Later, Moncrief \cite{Moncrief, Moncrief2} formulated the Regge-Wheeler and Zerilli quantities in a gauge-invariant way. Bardeen and Press \cite{Bardeen-Press} adapted the Newman-Penrose formalism to study equation (\ref{LVEE}). This approach was extended to Kerr spacetimes by Teukolsky \cite{Teukolsky}, showing that the extreme Weyl curvature components satisfy the Teukolsky equations. In the Schwarzschild spacetime, the transformation theory of Wald \cite{Wald} and Chandrasekhar \cite{Chandrasekhar} relates Regge-Wheeler-Zerilli-Moncrief system to the Teukolsky equations. See also \cite{Aksteiner-Andersson-Backdahl-Shah-Whiting} for further refinement of the transformation theory. These works accumulated to the proof of mode stability for Kerr by Whiting \cite{Whiting}.\\

A major breakthrough which goes beyond mode stability is the recent work of quantitative linear stability of Schwarzschild by Dafermos, Holzegel and Rodnianski \cite{Dafermos-Holzegel-Rodnianski}. This work can be roughly divided into three parts. First, the authors imposes the double null gauge on the metric perturbation $h$ and showed that the gauged equation is well-posed. Second, the vector field method developed by the first and the third author \cite{Dafermos-Rodnianski, Dafermos-Rodnianski2} is applied to estimate the Regge-Wheeler equation. The estimates for the Teukolsky equations are then derived via the transformation theory. Since the Regge-Wheeler and Teukolsky quantities are gauge-invariant, the estimates of this part in fact hold for all solutions of $(\ref{LVEE})$ without any gauge condition. Third, the estimates of the remaining gauge dependent quantities are derived by integrating the gauged equation together with elliptic estimates.\\

In this paper we study the odd solution of (\ref{LVEE}) under the harmonic gauge condition:
\begin{align}\label{HG}
\nabla^a\left(h_{ab}-\frac{1}{2}(\textup{tr}_g h)g_{ab}\right)=0.
\end{align}
The harmonic gauge is the linearlization of the harmonic map gauge used by Choquet-Bruhat \cite{Choquet-Bruhat} and Choquet-Bruhat-Geroch \cite{Choquet-Bruhat-Geroch} in proving short time existence and uniqueness of the vacuum Einstein equation (\ref{VEE}) and its maximal development. In the Riemannian setting, DeTurck \cite{DeTurk} applied the harmonic map gauge to prove the short time existence and the uniqueness of compact Ricci flow. Under the harmonic gauge, linearlized equation (\ref{LVEE}) is reduced to the Lichnerowicz d'Alembertian equation
\begin{align}\label{LVEEG}
\Box h_{ab}+2R_{acbd}h_{cd}=0.
\end{align}
From the standard theory of linear hyperbolic equations, the Cauchy problem for equation (\ref{LVEEG}) is well-posed and the solution exists for all time for regular initial data. Furthermore, the gauge condition (\ref{HG}) is preserved under (\ref{LVEEG}). For any solution $h$ of (\ref{LVEEG}), the gauge 1-from $\Gamma_b[h]$ satisfies the wave equation 
\begin{align*}
\Gamma_b[h]&:=\nabla^a\left(h_{ab}-\frac{1}{2}(\textup{tr}_g h)g_{ab}\right),\\
\Box \Gamma_b&=0.
\end{align*}
Therefore a solution of (\ref{LVEEG}) is also a solution of (\ref{LVEE}) provided $\Gamma_b$ and its normal derivative vanish initially. Conversely, any solution of (\ref{LVEE}) can be put into the harmonic gauge after subtracting a deformation tensor ${}^X\pi=\mathcal{L}_X g$. The gauge 1-form of ${}^X\pi$ is
\begin{align*}
\Gamma_b [{}^X\pi]=\Box X_b.
\end{align*}
Thus for any solution $h$ of (\ref{LVEE}), one can solve $\Box X_b=\Gamma_b [h]$ and then $\tilde{h}=h-{}^X\pi$ is a solution of (\ref{LVEEG}) and (\ref{HG}) simultaneously.\\

The main difference between this paper and \cite{Dafermos-Holzegel-Rodnianski} is the method to estimate gauge dependent quantities. Equation (\ref{LVEEG}) is a wave equation for the metric perturbation and we want to use the vector field method to control it directly. One nice property of (\ref{LVEEG}) is that it is reduced to ten linear scalar wave equations in the Minkowski spacetime. Therefore its behavior is well understood near the spatial infinity. Then we use the Regge-Wheeler quantities to control the error terms coming from nonzero mass $M>0$. Our proof requires further decomposition according to the angular mode. Recently, Johnson \cite{Johnson} uses a generalized harmonic gauge to study the linearlized gravity (\ref{LVEE}). In \cite{Johnson}, even/odd or angular mode decomposition is not needed and the decay estimate is obtained under the adapted Regge-Wheeler gauge.\\

Summarizing our result, we have the following linear stability theorem on the Schwarzschild spacetime for odd solutions:
\begin{theorem}[rough version]
Let $h$ be an odd solution of (\ref{LVEEG}) under the harmonic gauge condition (\ref{HG}). Define
\begin{align*}
\hat{h}:=h-\sum_{m=-1,0,1}d_m K_m,
\end{align*}
where $d_m$ are three numbers determined by $h$ and $K_m$ correspond to linearlized angular momentum solutions(see section \ref{ddd}). Further assume that $\hat{h}$ decays toward infinity fast enough on $\Sigma_0$ in the sense that some weighted norms are bounded. Then certain weighted $L^2$ norm and $L^\infty$ norm decay at rate $\tau^{-1+\delta}$ along the foliation $\Sigma_\tau$.
\end{theorem}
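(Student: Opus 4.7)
My plan is to follow the broad framework of \cite{Dafermos-Holzegel-Rodnianski}, with the harmonic-gauge Lichnerowicz equation (\ref{LVEEG}) playing the role of their double null gauged system: extract a gauge-invariant Regge-Wheeler scalar from $h$, estimate it by the vector field method, and then recover the remaining gauge-dependent components by integrating (\ref{LVEEG}) together with the constraint (\ref{HG}). The new point is that, since (\ref{LVEEG}) is already a tensorial wave equation, one can attempt to run the vector field method directly on the scalar components of $h$, treating the curvature coupling $2R_{acbd}h_{cd}$ as a lower-order error controlled through the Regge-Wheeler scalar.

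First I decompose $h$ into odd tensor spherical harmonics; the odd sector of a symmetric $(0,2)$-tensor carries three independent scalar components per mode $(\ell,m)$ for $\ell\geq 2$ and fewer in low modes. The system (\ref{LVEEG}) then becomes a countable family of coupled $1{+}1$-dimensional wave systems on the $(t,r)$ quotient, and I split the analysis by angular mode. For $\ell\geq 2$, I construct the odd Regge-Wheeler scalar $\Psi$, verify the Regge-Wheeler equation on the quotient, and apply the integrated local energy decay and $r^p$ hierarchy of \cite{Dafermos-Rodnianski,Dafermos-Rodnianski2} to obtain $\tau^{-1+\delta}$ decay for $\Psi$ along $\Sigma_\tau$, with Sobolev embedding giving the corresponding $L^\infty$ statement. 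The remaining gauge-dependent components are then controlled by combining direct energy and $r^p$ estimates on (\ref{LVEEG}) for each scalar component with transport equations along the ingoing/outgoing null directions coming from the mode-decomposed harmonic constraint (\ref{HG}). The $\ell=1$ sector must be treated separately: it is the unique odd sector admitting non-trivial stationary solutions, namely the linearized Kerr perturbations $K_{-1},K_0,K_1$, and the constants $d_m$ are the coordinates of the projection of $h$ onto $\mathrm{span}\{K_{-1},K_0,K_1\}$, chosen precisely so that $\hat h$ has no stationary $\ell=1$ component and hence a chance to decay.

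The principal obstacle is closing the estimates for the gauge-dependent components when $\ell\geq 2$. Running the vector field method on (\ref{LVEEG}) directly produces curvature error terms of schematic form $Mr^{-3}h$, which are borderline critical for the $r^p$ hierarchy and cannot be absorbed by a straightforward Gr\"onwall argument. To overcome this I will iterate between the scalar wave estimates for the individual components of $h$ and the Regge-Wheeler estimates for $\Psi$, using the improved decay of $\Psi$ to re-absorb the curvature error at each step, with the choice of multipliers dictated mode by mode. The loss $\delta>0$ in the final rate will be, as in \cite{Dafermos-Rodnianski2}, the price of pushing the $r^p$ hierarchy up to the critical exponent, and the weighted norms assumed initially on $\hat h|_{\Sigma_0}$ are exactly those needed to seed that hierarchy. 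For the $\ell=1$ residual, where the Regge-Wheeler quantity degenerates, decay will have to be obtained by isolating a conserved charge that fixes $d_m$ uniquely and then running a direct energy-multiplier argument on $\hat h|_{\ell=1}$, with the mass term again absorbed via the $r^p$ iteration.
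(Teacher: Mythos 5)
Your overall architecture coincides with the paper's: decompose the odd tensor $h$ into the components $H_0=h_{t\alpha}dx^\alpha$, $H_1=(1-\frac{2M}{r})h_{r\alpha}dx^\alpha$, $H_2=-h_{\alpha\beta}dx^\alpha dx^\beta$; use the gauge-invariant Regge--Wheeler quantities $P,Q$ (which satisfy (\ref{Regge Wheeler}) and decay by the Dafermos--Rodnianski machinery) as the source of decay; run the vector field method directly on the Lichnerowicz system; and iterate, losing derivatives at each step, to upgrade the rate --- the iteration count is exactly what produces the $m(\delta)$ loss in the paper. The $\ell=1$ subtraction of the $K_m$ is also as in the paper. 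However, two of your steps would not go through as stated.

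First, for $\ell\geq 2$ you locate the obstruction in curvature errors of size $Mr^{-3}h$, but the harder coupling is the one between $H_1$ and $H_2$ in (\ref{eqiation_H1H2}): the terms $\frac{1}{r^2}\left(1-\frac{3M}{r}\right)D^\dagger H_2$ and $\frac{2}{r^2}DH_1$ are $O(r^{-2})$ times an angular derivative and survive at $M=0$; they are not small errors absorbable through Regge--Wheeler decay. The paper handles them by building a single combined current for the pair $(H_1,H_2)$ containing the cross term $-\frac{2}{r^2}DH_1\cdot H_2\,g_{ab}$, verifying positivity of the resulting quadratic forms mode by mode (with separate constructions on $[2M,3M]$ and $[3M,\infty)$ and explicit checks for small $\ell$), and using the gauge condition (\ref{wavegauge}) together with (\ref{def_P}) and (\ref{def_Q}) to trade the residual off-diagonal pieces for $P$, $Q$ and $H_0$. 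Estimating ``each scalar component'' separately with transport along null directions does not supply this positivity. Second, your $\ell=1$ step --- a direct energy-multiplier argument on $\hat h|_{\ell=1}$ --- fails for $H_1$: the effective potential $V_1+\frac{\ell(\ell+1)-1}{r^2}$ is negative at the horizon when $\ell=1$, which is precisely why the paper does not estimate $H_1$ from its wave equation in this mode. Instead it notes that, after the $K_m$ subtraction, $P$ vanishes, so (\ref{wavegauge}) and (\ref{def_P}) express $\nabla_r(r^3H_1)$ and $\nabla_t(r^3H_1)$ in terms of $H_0$; it integrates radially from the horizon (where $H_1=H_0$) to bound $H_1$ pointwise by energies of $H_0$ on compact $r$-regions, and only then adds a large multiple of the $H_0$-current to restore positivity of the $H_1$-current. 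Without that radial integration your $\ell=1$ argument does not close.
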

This paper is organized as follows. In section 2 we introduce the notations and at the end the precise statement of the main theorem. In section 3 and 4 we estimate the solutions with $\ell\geq 2$. In section 5 we discuss the case $\ell=1$ and prove the main theorem. Appendix contains some basic computations.
\section*{Acknowledgments} 
The author is grateful to Simon Brendle for suggesting this problem to him and for his continuous support. The author also thanks Sergiu Klainerman and Mu-Tao Wang for their encouragement. The author thanks T\"ubingen University where part of this work was carried out.
\newpage
\section{Preliminary}
\subsection{Schwarzschild spacetime}
The Schwarzschild spacetime with mass $M>0$ is the spherical symmetric, static Lorenzian manifold $(\mathcal{M},g_M)$ which satisfies the vacuum Einstein equation (\ref{VEE}). The Schwarzschild spacetime was discovered by Schwarzschild as the first non-trivial solution of (\ref{VEE}). The Schwarzschild exterior can be parametrized by a coordinate $(t,r,\theta,\phi)=(x^0,x^1,x^2,x^3)$ with $t\in \R$, $r>2M$ and $(\theta,\phi)\in S^2$. Under this coordinate system, the metric is of the form
\begin{align}
{g}_M=-\left(1-\frac{2M}{r} \right)dt^2+\left(1-\frac{2M}{r}\right)^{-1}dr^2+r^2(d\theta^2+\sin^2\theta d\phi^2).
\end{align}
We use the following indices notation: $a,b,c,\dots =0,1,2,3$ for the spacetime indices; $A,B,C,\dots =0,1$ for the quotient indices; $\alpha,\beta,\gamma\dots 2,3$ for the spherical indices. The quotient metric $\tilde{g}$ and the spherical metric $\mathring{\sigma}$ are defined by
\begin{align*}
\tilde{g}_{AB}dx^Adx^B&:=-\left(1-\frac{2M}{r} \right)dt^2+\left(1-\frac{2M}{r}\right)^{-1}dr^2\\
\mathring{\sigma}_{\alpha\beta}dx^\alpha dx^\beta&:=d\theta^2+\sin^2\theta d\phi^2.
\end{align*}
Their Levi-Civita connections are denoted by $\tilde{\nabla}$ and $\mathring{\nabla}$ separately and are extended trivially to tensor bundles on $\mathcal{M}$. In particular
\begin{equation*}
\begin{split}
&\tilde{\nabla}_A dx^B=-\tilde{\Gamma}^B_{AC}dx^C,\\
&\tilde{\nabla}_A dx^\alpha=0,\\
&\mathring{\nabla}_\alpha dx^\beta=-\mathring{\Gamma}^\beta_{\alpha\gamma}dx^\gamma,\\
&\mathring{\nabla}_\alpha dx^B=0.
\end{split}
\end{equation*}
The Schwarzschild spacetime has four linearly independent Killing vector fields,
\begin{align*}
T&=\frac{\partial}{\partial t},\ \Omega_1=\frac{\partial}{\partial \phi},\\
\Omega_2&=\cos\phi\frac{\partial}{\partial \theta}-\cot\theta\sin\phi\frac{\partial}{\partial \phi},\\
\Omega_3&=\sin\phi\frac{\partial}{\partial \theta}+\cot\theta\cos\phi\frac{\partial}{\partial \phi}.
\end{align*}
We use these Killing vectors as commutators and denote them by $\Gamma=\{T,\Omega_1,\Omega_2,\Omega_3\}$.\\

Synge \cite{Synge} and Kruskal \cite{Kruskal} showed the Schwarzschild metric can be extended analytically beyond $r=2M$ as a vacuum spacetime. The maximal analytic extension can be described by the Penrose diagram which represents a conformal compactification of the spacetime. \\
\begin{figure}[h]
\begin{center}
\centering
\includegraphics{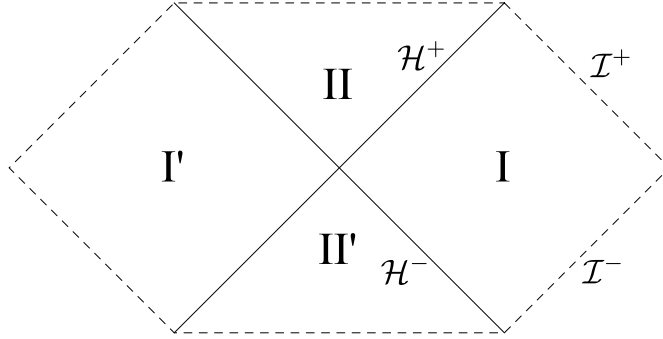}
\caption{Penrose diagram}
\end{center}
\end{figure}
The $(t,r,\theta,\phi)$ coordinate covers the exterior region I. In this paper we focus on region I together with the future horizon $\mathcal{H}^+$. On the horizon, it's convenient to work with the incoming Eddington-Finkelstein coordinate
\begin{align*}
v=t+r_*,\ R=r,
\end{align*}
where $r_*=r+2M\log (r-2M)-3M-2M\log M$. The spacetime metric is of the form
\begin{align*}
g=-\left(1-\frac{2M}{R}\right)dv^2+2dvdR+R^2\Big( d\theta^2+\sin^2\theta d\phi^2 \Big).
\end{align*}
The null vectors play an important role in Lorenzian geometry. In particular, they capture the radiation property of the wave equation. We fix the following two null vector fields:
\begin{align*}
L&=\frac{\partial }{\partial t}+\left(1-\frac{2M}{r}\right)\frac{\partial}{\partial r},\\
\underline{L}&=\frac{\partial }{\partial t}-\left(1-\frac{2M}{r}\right)\frac{\partial}{\partial r}.
\end{align*}
Both $L$ and $\underline{L}$ are smooth up to the future horizon. On the horizon, $L=2\frac{\partial}{\partial v}$ and $\underline{L}=0$.\\

We work with spacelike hypersurfaces defined as follows. Let $r_1$ be a fixed number in $(2M,3M)$. $\Sigma_0$ is a spherical symmetric spacelike hypersurface with the following properties: First, $\Sigma_0$ intersects the future horizon transversely. Second, $\Sigma_0\cap\{r\geq r_1\}=\{t=0,\ r\geq r_1\}$. We define the following notations: \\\\
$\Sigma_\tau$: the image of $\Sigma_0$ under the diffeomorphism $\Phi_\tau$ generated by $T$\\
$D(\tau_1,\tau_2)$: the domain bounded by $\Sigma_{\tau_1},\ \Sigma_{\tau_2}$ and $\mathcal{H}^+$\\
$n_{\Sigma_\tau}$: the unit future normal vector of $\Sigma_\tau$\\
$\frac{\partial}{\partial \rho}$: the radial tangent vector of $\Sigma_\tau$ such that $\frac{\partial r}{\partial \rho}=1$\\
$dVol_{\Sigma_\tau}$: the induced volume form of $\Sigma_\tau$\\
$dVol_{S^2}$: the volume form of the unit sphere.
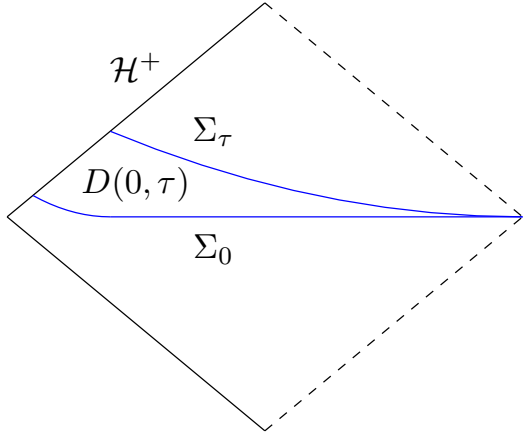
\begin{figure}[h]
\begin{center}
\begin{tikzpicture}[scale=1.2, transform shape]
\begin{axis}[
    hide axis,    
]

\addplot [
    domain=0:1, 
    samples=10,
]
{x};
 \addplot [
    domain=0:1, 
    samples=10,    
]
{-x};
\addplot [
    dashed,
    domain=1:2, 
    samples=10,         
]
{x-2};
\addplot [
    domain=1:2, 
    samples=10, 
    dashed,
]
{-x+2};
\addplot [
    domain=0.4:2, 
    samples=10, 
    blue,
]
{0};

\addplot [
    domain=0.1:0.4, 
    samples=10, 
    blue,
]
{10/9*(x-0.4)^2};

\addplot [
    domain=0.4:2, 
    samples=10, 
    blue,
]
{5/32*(x-2)^2};

\node at (axis cs:0.8,-0.15) {$\Sigma_0$};
\node at (axis cs:0.8,0.4) {$\Sigma_\tau$};
\node at (axis cs:0.5,0.15) {$D(0,\tau)$};
\node at (axis cs:0.5,0.7) {$\mathcal{H}^+$};
\end{axis}
\end{tikzpicture}
\caption{Foliation of spacelike hypersurfaces $\Sigma_\tau$}
\end{center}
\end{figure}
\\\\
For more information for the Schwarzschild spacetime, we refer readers to \cite{Wald3}.
\subsection{spherical vector bundles}
The main quantities of this paper would be sections of vector bundles $\mathcal{L}(-1)$ or $\mathcal{L}(-2)$ defined as follows.
\begin{definition}
$\mathcal{L}(-1)\subset T^*\mathcal{M}$ is the subbundle of spherical one forms. Locally, a section $\Phi$ of $\mathcal{L}(-1)$ can be written as
\begin{align*}
\Phi=\Phi_\alpha dx^\alpha.
\end{align*} 
$\mathcal{L}(-2)\subset T^*\mathcal{M}\otimes_s T^*\mathcal{M}$ is the subbundle of traceless spherical 2-tensors which are symmetric. Locally, a section $\Psi$ of $\mathcal{L}(-2)$ can be written as
\begin{align*}
\Psi=\Psi_{\alpha\beta} dx^\alpha dx^\beta,\ \mathring{\sigma}^{\alpha\beta}\Psi_{\alpha\beta}=0.
\end{align*}
The connections on $\mathcal{L}(-1)$ and $\mathcal{L}(-2)$ induced by the Levi-Civita connection are denoted by ${}^1\nabla$ and ${}^2\nabla$ separately.
\end{definition}
We omit the superscript in ${}^1 \nabla$ and ${}^2\nabla$ when it doesn't cause confusion. Alternatively, one can define $\mathcal{E}(-1)=T^* S^2$, and define $\mathcal{E}(-2)\subset T^* S^2\otimes_s T^* S^2$ to be the bundle of symmetric traceless tensors. Then $\mathcal{L}(-1)$ and $\mathcal{L}(-2)$ are the pull back bundle of $\mathcal{E}(-1)$ and $\mathcal{E}(-2)$ through the projection map. The eigensections of $\mathring{\Delta}$ on $\mathcal{E}(-1)$ and $\mathcal{E}(-2)$ are called spin-weighted spherical harmonics and can be expressed explicitly by scalar spherical harmonics. 
\begin{lemma}
Let $Y^{m\ell}(\theta,\phi)$ , $\ell\geq 0$ and $-\ell\leq m\leq \ell$, be the spherical harmonics on $S^2$. Define
\[Y^{m\ell}_\alpha:=\mathring{\nabla}_\alpha Y^{m\ell},\] 
and 
\[Z^{m\ell}_\alpha:=\mathring{\epsilon}_\alpha^{\ \beta}\mathring{\nabla}_\beta Y^{m\ell}.\] 
Then $\Big\{Y^{m\ell}_\alpha,Z^{m\ell}_\alpha\ \Big|\ell\geq 1 \Big\}$ forms a complete $L^2$ basis of eigensections of $\mathring{\Delta}$ on $\mathcal{E}(-1)$ with eigenvalues $-\ell (\ell+1)+1$.\\
Similarly, define
\[Y^{m\ell}_{\alpha\beta}:=\mathring{\nabla}_\alpha Y_\beta^{m\ell}+\mathring{\nabla}_\beta Y_\alpha^{m\ell}-\mathring{\nabla}^\gamma Y_\gamma^{m\ell}\mathring{\sigma}_{\alpha\beta},\] 
and 
\[Z^{m\ell}_{\alpha\beta}:=\mathring{\nabla}_\alpha Z_\beta^{m\ell}+\mathring{\nabla}_\beta Z_\alpha^{m\ell}.\] 
Then $\Big\{Y^{m\ell}_{\alpha\beta},Z^{m\ell}_{\alpha\beta}\ \Big|\ell\geq 2 \Big\}$  forms a complete $L^2$ basis of eigensections of $\mathring{\Delta}$ on $\mathcal{E}(-2)$ with eigenvalues $-\ell (\ell+1)+4$.
\end{lemma}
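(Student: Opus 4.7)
The plan is to establish three properties of the listed tensor fields: (i) they are eigensections of $\mathring{\Delta}$ with the stated eigenvalues, (ii) they are pairwise $L^2$-orthogonal (after fixing normalizations), and (iii) together they span a dense subspace of $L^2$. The main tools are the Ricci identity on $(S^2,\mathring{\sigma})$ (which has $Ric_{\alpha\beta}=\mathring{\sigma}_{\alpha\beta}$ and Gauss curvature $1$), the Hodge decomposition of 1-forms, and ultimately the spectral theorem for the elliptic self-adjoint operator $\mathring{\Delta}$ on the relevant bundle over the compact manifold $S^2$.

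For (i) on $\mathcal{E}(-1)$: the Ricci identity on a function $f$ yields $\mathring{\Delta}\mathring{\nabla}_\alpha f=\mathring{\nabla}_\alpha\mathring{\Delta}f+\mathring{\nabla}_\alpha f$, so $\mathring{\Delta}Y^{m\ell}_\alpha=(-\ell(\ell+1)+1)Y^{m\ell}_\alpha$, and the parallelism of $\mathring{\epsilon}$ gives the same eigenvalue for $Z^{m\ell}_\alpha$. For (i) on $\mathcal{E}(-2)$: write $Y^{m\ell}_{\alpha\beta}=L(dY^{m\ell})_{\alpha\beta}$, where $L$ is the conformal Killing operator $L\omega_{\alpha\beta}=\mathring{\nabla}_\alpha\omega_\beta+\mathring{\nabla}_\beta\omega_\alpha-\mathring{\nabla}^\gamma\omega_\gamma\,\mathring{\sigma}_{\alpha\beta}$. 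A second application of the Ricci identity — now commuting $\mathring{\Delta}$ past $L$ on $S^2$ — produces an additional shift of $3$, giving the eigenvalue $-\ell(\ell+1)+4$. The identical argument with $*dY^{m\ell}$ in place of $dY^{m\ell}$ handles $Z^{m\ell}_{\alpha\beta}$.

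For (ii): orthogonality among the $Y^{m\ell}_\alpha$ reduces to
\[
\int_{S^2}\mathring{\sigma}^{\alpha\beta}Y^{m\ell}_\alpha Y^{m'\ell'}_\beta\, dVol_{S^2}=-\int_{S^2}Y^{m\ell}\mathring{\Delta}Y^{m'\ell'}\, dVol_{S^2}=\ell'(\ell'+1)\delta_{mm'}\delta_{\ell\ell'},
\]
while $\mathring{\nabla}^\alpha Z^{m\ell}_\alpha=\mathring{\epsilon}^{\alpha\beta}\mathring{\nabla}_\alpha\mathring{\nabla}_\beta Y^{m\ell}=0$ combined with integration by parts shows $Y^{m\ell}_\alpha\perp Z^{m'\ell'}_\alpha$; a parallel computation gives $Z^{m\ell}_\alpha\perp Z^{m'\ell'}_\alpha$ for distinct indices. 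Iterated integration by parts disposes of the 2-tensor basis. Crucially, nondegeneracy forces $\ell\geq 2$ because on $S^2$ the vector fields dual to $Y^{m1}_\alpha$ are conformal Killing and those dual to $Z^{m1}_\alpha$ are Killing, so $L(dY^{m1})$ and $L(*dY^{m1})$ vanish identically.

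For (iii): on $S^2$, $H^1(S^2)=0$ gives the Hodge decomposition $\omega=df+*dg$ for any 1-form, and expanding $f,g$ in scalar spherical harmonics (the constant $\ell=0$ modes being killed by $d$) writes $\omega$ as an $L^2$-convergent sum over $Y^{m\ell}_\alpha$ and $Z^{m\ell}_\alpha$ with $\ell\geq 1$. For $\mathcal{E}(-2)$ one first writes every $\Psi$ as $L\omega_1+L(*\omega_2)$ for 1-forms $\omega_j$ and then applies the 1-form decomposition; the Weitzenböck identity $L^*L=-2\mathring{\Delta}-2$ on 1-forms of $S^2$ shows that $\ker L$ is exactly the six-dimensional space of conformal Killing forms (the $\ell=1$ modes of $\mathcal{E}(-1)$), which explains why only $\ell\geq 2$ appears in the basis. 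The main obstacle is verifying the surjectivity of $L$ modulo its kernel; the cleanest route is to invoke the spectral theorem directly, since $\mathring{\Delta}$ on $\mathcal{E}(-2)$ is elliptic and self-adjoint on the compact manifold $S^2$ and its eigensections therefore form a complete $L^2$ basis — (i) and (ii) then identify these eigensections with the tensors listed in the statement.
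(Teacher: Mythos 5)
The paper does not prove this lemma at all: it is stated as a known fact (the sentence preceding it identifies these objects as spin-weighted spherical harmonics, expressible through scalar spherical harmonics), and the only place the paper ``uses'' a proof-level ingredient is in the appendix, where $\mathring{\Delta}Y_\alpha=(-\ell(\ell+1)+1)Y_\alpha$ is quoted without derivation. So your write-up is supplying an argument the paper omits, and the argument you give is the standard one: the Ricci identity on the unit sphere for the eigenvalue shifts, Hodge theory ($H^1(S^2)=0$) for completeness on $\mathcal{E}(-1)$, and the conformal Killing operator $L$ for $\mathcal{E}(-2)$. Your eigenvalue and orthogonality computations are correct, as is the identification of the $\ell=1$ kernel of $L$ with the (conformal) Killing one-forms, which explains the restriction to $\ell\geq 2$.

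The one soft spot is exactly the step you flag as ``the main obstacle'': surjectivity of $L$ onto traceless symmetric $2$-tensors. Writing every $\Psi$ as $L\omega_1+L(\ast\omega_2)$ presupposes that $S^2$ carries no nonzero transverse-traceless symmetric $2$-tensors, i.e.\ $\ker L^*\cap\{\textup{tr}=0\}=0$; and the proposed fallback of ``invoking the spectral theorem'' does not by itself close the gap, since the spectral theorem gives completeness of \emph{all} eigensections of $\mathring{\Delta}$ on $\mathcal{E}(-2)$ but does not show that the listed tensors exhaust each eigenspace. The clean way to finish is: if $\Psi\perp Y^{m\ell}_{\alpha\beta}$ and $\Psi\perp Z^{m\ell}_{\alpha\beta}$ for all $\ell\geq 2$, then by the already-established completeness on $\mathcal{E}(-1)$ and the vanishing of $L$ on the $\ell=1$ modes, $\Psi\perp L\omega$ for a dense set of one-forms $\omega$, hence $L^*\Psi=0$ weakly; elliptic regularity makes $\Psi$ a smooth TT tensor, and such tensors correspond to holomorphic quadratic differentials on $S^2$, which vanish (Riemann--Roch, or a Weitzenb\"ock formula for $LL^*$). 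With that fact supplied, your proof is complete and correct.
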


Any section $\Phi$ of $\mathcal{L}(-1)$ can be decomposed as
\begin{align*}
\Phi=\sum_{m,\ell} \Big(f_1^{m\ell}(t,r)Y^{m\ell}_{\alpha}+f_2^{m\ell}(t,r)Z^{m\ell}_{\alpha}\Big)dx^\alpha.
\end{align*} 
$\Phi$ is said to be supported on the angular mode $\ell=k$ if $f_{1}^{m\ell}=f_2^{m\ell}\equiv 0$ for any $\ell\neq k$ and the definition for $\Psi\in \Gamma(\mathcal{L}(-2))$ is the same.\\

We also define the angular operators $D: \mathcal{L}(-1)\longrightarrow \mathcal{L}(-2)$ and $D^\dagger:\mathcal{L}(-2)\longrightarrow \mathcal{L}(-1)$ as
\begin{align}
D(\Phi_\alpha dx^\alpha)&=r\Big(\mathring{\nabla}_\alpha \Phi_\beta+\mathring{\nabla}_\beta \Phi_\alpha-\mathring{\nabla}^\gamma \Phi_\gamma \mathring{\sigma}_{\alpha\beta}\Big)dx^\alpha dx^\beta,\\
D^\dagger (\Psi_{\alpha\beta} dx^\alpha dx^\beta )&=-\frac{2}{r}\mathring{\sigma}^{\gamma\beta}\mathring{\nabla}_{\gamma}\Psi_{\alpha\beta}dx^\alpha.
\end{align}
It's straightforward to verify the following properties of $D$ and $D^\dagger$.
\begin{lemma}
For any $\Phi\in\mathcal{L}(-1)$ and $\Psi\in\mathcal{L}(-2)$,
\begin{align*}
\int_{S^2(t,r)} D\Phi\cdot\Psi dVol_{S^2}&=\int_{S^2(t,r)} \Phi\cdot D^\dagger\Psi dVol_{S^2},\\
\int_{S^2(t,r)} |D\Phi|^2 dVol_{S^2}&=\int_{S^2(t,r)} 2r^2|\nablas \Phi|^2-2|\Phi|^2 dVol_{S^2},\\
\int_{S^2(t,r)} |D^\dagger\Psi|^2 dVol_{S^2}&=\int_{S^2(t,r)} 2r^2|\nablas \Psi|^2+4|\Psi|^2 dVol_{S^2}.
\end{align*}
Furthermore,
\begin{align*}
[D,\nabla_A]=0,\ [D^\dagger,\nabla_A]=0.
\end{align*}
\end{lemma}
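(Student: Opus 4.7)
The four assertions separate into two groups: the adjointness and the two norm identities are intrinsic to $S^2$, while the commutators with $\nabla_A$ are a calculation in the warped-product structure of $g_M$. The only subtle point is bookkeeping of $r$-weights and of whether the pointwise pairings are taken with $\mathring{\sigma}$ or with the physical sphere metric $r^2\mathring{\sigma}$; the stated identities hold precisely when one uses the physical pairing, which I fix throughout.

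For the adjointness I would pair $D\Phi$ with $\Psi$ pointwise using the physical metric. Because $\Psi$ is symmetric and traceless, the trace term $-\mathring{\nabla}^\gamma\Phi_\gamma\mathring{\sigma}_{\alpha\beta}$ in the definition of $D\Phi$ drops out and the two symmetric terms contribute equally, so after raising indices the integrand becomes a multiple of $\mathring{\nabla}^\alpha\Phi^\beta\Psi_{\alpha\beta}$. A single integration by parts on $S^2$ moves the divergence onto $\Psi$, and combined with the normalization $-2/r$ in the definition of $D^\dagger$ this produces exactly $\Phi\cdot D^\dagger\Psi$ pointwise.

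For the two $L^2$ identities the plan is a Bochner-type computation. Setting $A_{\alpha\beta}=\mathring{\nabla}_\alpha\Phi_\beta+\mathring{\nabla}_\beta\Phi_\alpha$ and $B=\mathring{\nabla}^\gamma\Phi_\gamma$, the pointwise expansion together with $A^\alpha{}_\alpha = 2B$ and $\mathring{\sigma}^{\alpha\beta}\mathring{\sigma}_{\alpha\beta}=2$ gives
\begin{align*}
|A-B\mathring{\sigma}|^2 = 2|\mathring{\nabla}\Phi|^2 + 2\,\mathring{\nabla}_\alpha\Phi_\beta\,\mathring{\nabla}^\beta\Phi^\alpha - 2B^2.
\end{align*}
The first and third terms are already in the desired form; to handle the cross term I would integrate by parts twice on $S^2$ and use the commutator $[\mathring{\nabla}^\beta,\mathring{\nabla}_\alpha]\Phi_\beta=\Phi_\alpha$, which follows from the fact that the unit sphere has Ricci tensor $\mathring{\sigma}$. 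This yields $\int_{S^2}\mathring{\nabla}_\alpha\Phi_\beta\mathring{\nabla}^\beta\Phi^\alpha\,dVol_{S^2} = \int_{S^2}(B^2-|\Phi|^2)\,dVol_{S^2}$, after which the $B^2$ terms cancel and the stated $-2|\Phi|^2$ correction emerges once the $r$-weights from the physical pairing are restored. The identity for $|D^\dagger\Psi|^2$ is proved identically; the change of coefficient from $-2$ to $+4$ reflects the fact that the same Ricci commutator is applied on both angular slots of the traceless symmetric $\Psi$, so that the two curvature contributions add rather than partially cancel.

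Finally, $[D,\nabla_A]=0$ and $[D^\dagger,\nabla_A]=0$ are direct Christoffel computations. In the Schwarzschild coordinates one reads off $\Gamma^B{}_{A\alpha}=0$ and $\Gamma^\gamma{}_{A\alpha}=r^{-1}(\partial_A r)\delta^\gamma_\alpha$, so $\nabla_A$ acts on a spherical $1$-form by $\partial_A-r^{-1}(\partial_A r)$ and on a spherical $(0,2)$-tensor by $\partial_A-2r^{-1}(\partial_A r)$. Since the spherical Christoffels are independent of $t$ and $r$, the operator $\partial_A$ commutes with $\mathring{\nabla}$ on pulled-back tensors. One then checks that both $\nabla_A(D\Phi)$ and $D(\nabla_A\Phi)$ equal $r\,\partial_A(A_{\alpha\beta}-B\mathring{\sigma}_{\alpha\beta}) - (\partial_A r)(A_{\alpha\beta}-B\mathring{\sigma}_{\alpha\beta})$, and the analogous calculation with weight $r^{-1}$ in place of $r$ handles $D^\dagger$. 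The only real obstacle in the whole argument is clerical: once the pairing conventions are fixed, the four claims are essentially one- or two-line computations.
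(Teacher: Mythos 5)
The paper offers no proof of this lemma (it is introduced with ``It's straightforward to verify\dots''), so there is no argument of the author's to compare against; I can only assess your proposal on its own terms. Your treatment of the adjointness identity, of the identity for $|D\Phi|^2$, and of the commutators $[D,\nabla_A]=[D^\dagger,\nabla_A]=0$ is correct. The weight bookkeeping with the physical metric $g^{\alpha\beta}=r^{-2}\mathring{\sigma}^{\alpha\beta}$ is exactly what makes the powers of $r$ cancel in the first identity; the expansion $|A-B\mathring{\sigma}|^2=2|\mathring{\nabla}\Phi|^2+2\mathring{\nabla}_\alpha\Phi_\beta\mathring{\nabla}^\beta\Phi^\alpha-2B^2$ together with the cross-term identity $\int\mathring{\nabla}_\alpha\Phi_\beta\mathring{\nabla}^\beta\Phi^\alpha=\int(B^2-|\Phi|^2)$ is right; and the Christoffel computation $\Gamma^\gamma_{A\alpha}=r^{-1}(\partial_Ar)\delta^\gamma_\alpha$ yields the two commutators exactly as you describe (consistently with the formulas for ${}^1\nabla_A$ and ${}^2\nabla_A$ in the paper's appendix).

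The gap is in the third identity, which is \emph{not} ``proved identically.'' For $|D\Phi|^2$ you expand the norm of the full symmetrized traceless gradient and one Ricci commutation closes the argument; $|D^\dagger\Psi|^2$ is instead the norm of a divergence, and you must manufacture $|\mathring{\nabla}\Psi|^2$ out of it. Running the same steps --- integrate by parts, commute using $[\mathring{\nabla}_\gamma,\mathring{\nabla}^\beta]\Psi_{\alpha\beta}=-2\Psi_{\alpha\gamma}$, integrate by parts again --- leaves you with (all contractions taken with $\mathring{\sigma}$)
\begin{align*}
\int_{S^2}|\mathring{\nabla}^\beta\Psi_{\alpha\beta}|^2 \,dVol_{S^2}= \int_{S^2} \mathring{\nabla}^\beta\Psi^{\alpha\gamma}\,\mathring{\nabla}_\gamma\Psi_{\alpha\beta}\,dVol_{S^2} + 2\int_{S^2}|\Psi|^2\,dVol_{S^2},
\end{align*}
and the cross term on the right is not $|\mathring{\nabla}\Psi|^2$. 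Finishing requires input special to traceless symmetric $2$-tensors in two dimensions: the antisymmetrization identity $\mathring{\nabla}_\gamma\Psi_{\alpha\beta}-\mathring{\nabla}_\alpha\Psi_{\gamma\beta}=\mathring{\epsilon}_{\gamma\alpha}\,\mathring{\epsilon}^{\mu\nu}\mathring{\nabla}_\mu\Psi_{\nu\beta}$ converts the cross term into $\int|\mathring{\nabla}\Psi|^2-|\mathring{\epsilon}^{\mu\nu}\mathring{\nabla}_\mu\Psi_{\nu\beta}|^2$, and then the purely algebraic pointwise fact that $|\mathring{\sigma}^{\gamma\alpha}T_{\gamma\alpha\beta}|=|\mathring{\epsilon}^{\gamma\alpha}T_{\gamma\alpha\beta}|$ for any $T$ traceless and symmetric in $(\alpha\beta)$ identifies the curl norm with the divergence norm and closes the computation. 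Your heuristic that ``the two curvature contributions add'' explains neither step and does not account for where the relation between $|\mathrm{div}\,\Psi|^2$ and $|\mathring{\nabla}\Psi|^2$ comes from. Alternatively, and probably closest to what the author intends by ``straightforward,'' the identity can be verified directly on the $L^2$ basis $Y^{m\ell}_{\alpha\beta},Z^{m\ell}_{\alpha\beta}$ using $\mathring{\nabla}^\beta Y^{m\ell}_{\alpha\beta}=\bigl(2-\ell(\ell+1)\bigr)Y^{m\ell}_\alpha$ and the stated eigenvalue $-\ell(\ell+1)+4$ of $\mathring{\Delta}$ on $\mathcal{E}(-2)$; either route supplies the missing ingredient.
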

For any section $\Phi$ of $\mathcal{L}(-1)$ or $\mathcal{L}(-2)$, its stress-energy tensor is defined by
\begin{align}
T_{ab}[\Phi]:=\nabla_a\Phi\cdot \nabla_b\Phi-\frac{1}{2}(\nabla_c\Phi \cdot\nabla^c\Phi )g_{ab}.
\end{align}
$T_{ab}[\Phi]$ satisfies the energy condition that $T_{ab}[\Phi]V^aW^b$ is non-negative as $V$ and $W$ are future timelike or null vectors. Since the curvatures of $\mathcal{L}(-1)$ and $\mathcal{L}(-2)$ are supported along the angular directions, for any vector field $W$ orthogonal to spheres,
\begin{align}
\nabla^a (T_{ab}[\Phi]W^a)=\Box\Phi\cdot \nabla_W\Phi+T_{ab}[\Phi]\nablab^a W^b.
\end{align} 
For any vector field $W$, we define 
\begin{align*}
e^W[\Phi]&:=T_{ab}[\Phi]W^b n_{\Sigma_\tau}^b.
\end{align*}
Following \cite{Ionescu-Klainerman}, we define the unweighted and weighted energies.
\begin{align}
e[\Phi]&:=|\nabla_L \Phi|^2+|\slashed{\nabla}\Phi|^2+|\nabla_{\rho}\Phi|^2+\frac{1}{r^2}|\Phi|^2,\\
e^p[\Phi]&:=r^p\left( |\nabla_L \Phi|^2+|\slashed{\nabla}\Phi|^2+\frac{1}{r^2}|\nabla_{\rho}\Phi|^2+\frac{1}{r^2}|\Phi|^2\right),\\
E^p[\Phi](\tau)&:=\int_{\Sigma_\tau}e^p[\Phi]dVol_{\Sigma_\tau},
\end{align}
where $|\nablas \Phi|^2$ consists of derivatives along the angular directions. 
\begin{align*}
|\nablas \Phi|^2:=\frac{1}{r^2}\left( |\nabla_\theta \Phi|^2+\frac{1}{\sin^2\theta}|\nabla_\phi \Phi|^2 \right).
\end{align*}
The degenerated weighted energy is defined by
\begin{align}
e^{p,deg}[\Phi]&:=r^p\left( \left(1-\frac{3M}{r}\right)^2\Big(|\nabla_L \Phi|^2+|\slashed{\nabla}\Phi|^2\Big)+\frac{1}{r^2}|\nabla_{\rho}\Phi|^2+\frac{1}{r^2}|\Phi|^2 \right),\\
E^{p,deg}[\Phi](\tau)&:=\int_{\Sigma_\tau}e^{p,deg}[\Phi]dVol_{\Sigma_\tau}.
\end{align}
We use the Killing vector fields $\Gamma=\{ T,\Omega_1,\ \Omega_2,\ \Omega_3\}$ as commutators to define higher order energies by
\begin{align}
E^{p}[\Phi]^{\leq s}(\tau)&:=\sum_{j=0}^s E^p[\mathcal{L}^j_\Gamma \Phi](\tau),\\
\bar{E}[\Phi]^{\leq s}&:=E^{p=2}[\Phi]^{\leq s}(0).
\end{align}
From the Sobolev embedding, we have the following $L^\infty$ bound:
\begin{lemma}\label{Linfty}
For any $p\geq 0$ and any section $\Phi$ of $\mathcal{L}(-1)$ or $\mathcal{L}(-2)$, we have
\begin{align}
|\Phi(\tau,r)|^2   \leq \frac{C}{r^{p}} E^{p}[\Phi]^{\leq 2}(\tau).
\end{align}
\end{lemma}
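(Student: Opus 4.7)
This is a standard weighted Sobolev embedding on $\Sigma_\tau$ obtained by combining (i) a two-dimensional Sobolev inequality on each sphere $S^2(\tau, r)$, consuming up to two angular commutators $\Omega_i\in\Gamma$, with (ii) a one-dimensional radial trace inequality in $\rho$, controlled by the $r^{-2}|\nabla_\rho\Phi|^2$ term of $e^p$. Two commutators in total are needed, matching the index $\leq 2$ in $E^p[\Phi]^{\leq 2}$.

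\textit{Step 1 (angular Sobolev).} Fix $(\tau,r)$ and apply the 2D Sobolev embedding $H^2(S^2)\hookrightarrow L^\infty(S^2)$ on the unit sphere to $\Phi(\tau,r,\cdot)$, regarded as a section of $\mathcal{E}(-1)$ or $\mathcal{E}(-2)$. Since $\Omega_1,\Omega_2,\Omega_3$ span $TS^2$, their Lie derivatives pointwise control the intrinsic covariant derivatives $\mathring\nabla^j\Phi$ for $j\leq 2$. Writing $\hat F_j(r):=\int_{S^2}|\mathcal{L}_\Omega^j\Phi(\tau,r,\cdot)|^2\,d\Omega$ and tracking how the $r$-weights from the relation between the unit-sphere norm and the induced $g$-norm on $\mathcal{L}(-1), \mathcal{L}(-2)$ cancel, this yields
\begin{equation*}
|\Phi(\tau,r,\omega)|^2 \leq C\sum_{j\leq 2}\hat F_j(r).
\end{equation*}

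\textit{Step 2 (radial trace).} For any $\Phi$ with $E^p[\Phi]^{\leq 2}(\tau)<\infty$ one has $r^p\hat F_j(r)\to 0$ along a sequence $r\to\infty$, hence (by a density argument) everywhere. The fundamental theorem of calculus and Cauchy-Schwarz on $S^2$ then give
\begin{equation*}
r^p\hat F_j(r) \leq \int_r^\infty\!\left(p\rho^{p-1}\hat F_j + 2\rho^p\sqrt{\hat F_j\,\hat H_j}\right)d\rho, \qquad \hat H_j(\rho):=\int_{S^2}|\nabla_\rho\mathcal{L}_\Omega^j\Phi|^2\,d\Omega.
\end{equation*}
The elementary inequality $2\sqrt{AB}\leq A+B$ together with $\rho\geq r_1>0$ dominates the integrand by $C\rho^p(\hat F_j+\hat H_j)$.

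\textit{Step 3 (identification with $E^p$).} Because $dVol_{\Sigma_\tau}\sim r^2\,dr\,d\Omega$ asymptotically and is bounded near the horizon, the two terms $r^p\cdot r^{-2}|\mathcal{L}_\Omega^j\Phi|^2$ and $r^p\cdot r^{-2}|\nabla_\rho\mathcal{L}_\Omega^j\Phi|^2$ of $e^p[\mathcal{L}_\Omega^j\Phi]$ integrate (up to bounded factors) exactly to $\int r^p\hat F_j\,dr$ and $\int r^p\hat H_j\,dr$. Thus Step 2 gives $r^p\hat F_j(r)\leq CE^p[\mathcal{L}_\Omega^j\Phi](\tau)$, and combining with Step 1,
\begin{equation*}
r^p|\Phi(\tau,r)|^2 \leq C\sum_{j\leq 2}E^p[\mathcal{L}_\Omega^j\Phi](\tau)\leq C\,E^p[\Phi]^{\leq 2}(\tau),
\end{equation*}
since $\Omega_1,\Omega_2,\Omega_3\in\Gamma$. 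The main subtlety I expect is the uniformity of the 2D Sobolev inequality for sections of the tensor bundles $\mathcal{E}(-1), \mathcal{E}(-2)$ with a universal constant; this can be resolved either by decomposition into spin-weighted spherical harmonics (the eigensections of $\mathring\Delta$ introduced in the preceding lemma) or by applying scalar Sobolev to the pointwise function $|\Phi|^2$ together with a Kato-type inequality.
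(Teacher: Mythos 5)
Your proof is correct and takes essentially the same route as the paper: an $H^2(S^2)\hookrightarrow L^\infty(S^2)$ Sobolev inequality on each sphere consuming two $\Omega$-commutators, followed by a radial integration from infinity along $\Sigma_\tau$ controlled by the $r^{p-2}\left(|\nabla_\rho\Phi|^2+|\Phi|^2\right)$ terms of $e^p$, with a density/approximation argument to justify the vanishing at infinity. The only cosmetic difference is in the radial step: the paper freezes the weight at $(r')^p$ and uses $r^p\geq (r')^p$ for $r\geq r'$, whereas you differentiate $\rho^p\hat F_j$ and absorb the extra $p\rho^{p-1}\hat F_j$ term using $\rho\geq 2M$ --- both are equivalent.
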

\begin{proof}
First we assume $\Phi$ is spatially compactly supported. From the Sobolev inequality on $S^2$,
\begin{align*}
|\Phi(\tau , r')|^2\leq C \int_{S^2(\tau ,r')} \sum_{j=0}^2 |\mathcal{L}^j_\Omega \Phi|^2 dVol_S^2.
\end{align*}
From the compactness of $\textup{supp}(\Phi)$ on $\Sigma_\tau$,
\begin{align*}
\int_{S^2(\tau,r')} |\Phi|^2 dVol_S^2&\leq \int_{r'}^\infty \int_{S^2(\tau,r)} \left(\frac{|\nabla_\rho \Phi|^2}{r^2}+\frac{|\Phi|^2}{r^2}\right)r^2dVol_{S^2} dr\\
&\leq C\int_{\Sigma_\tau\cap\{r\geq r'\}} \frac{1}{(r')^p} e^p[\Phi]dVol_{\Sigma_\tau}\\
&=\frac{C}{(r')^p} E^p[\Phi].
\end{align*}
Then the result follows by applying the same estimate to $\mathcal{L}_\Omega \Phi$ and $\mathcal{L}^2_\Omega\Phi$. The assumption of compact support can then be removed by approximation.
\end{proof}
\subsection{Lichnerowicz d'Alembertian for odd solutions}
We first give the definition of odd and even symmetric 2-tensors.
\begin{definition}
A symmetric 2-tensor $h=h_{ab}dx^adx^b$ is an odd tensor provided 
\begin{align*}
h_{AB}&=0,\\
\mathring{\nabla}^\alpha h_{A\alpha}&=0,\\
\mathring{\nabla}^\alpha\mathring{\nabla}^\beta h_{\alpha\beta}&=0,\ \mathring{\sigma}^{\alpha\beta}h_{\alpha\beta}=0.
\end{align*}
$h$ is an even tensor provided 
\begin{align*}
\mathring{\epsilon}^{\beta\alpha}\mathring{\nabla}_\beta h_{A\alpha}&=0,\\
\mathring{\epsilon}^{\beta\alpha} \mathring{\nabla}_{\beta}\mathring{\nabla}^\gamma h_{\alpha\gamma}&=0.
\end{align*}
\end{definition}
From the Hodge decomposition, any symmetric $2$-tensor $h$ can be decomposed into even and odd parts $h=h_1+h_2$ and $h$ is a solution of (\ref{LVEE}) if and only if $h_1$ and $h_2$ are both solutions. Thus one can study them separately. From now on we assume $h$ is an odd tensor through the entire paper. For any odd solution of (\ref{LVEE}), one can define two gauge-invariant quantities $P$ and $Q$ as
\begin{align}
P&:=r^3\epsilon^{AB}\tilde{\nabla}_B (r^{-2}h_{A\alpha})dx^\alpha,\\
Q&:=\Big(\tilde{\nabla}^A r\Big)\left(\mathring{\nabla}_\alpha h_{A\beta}+\mathring{\nabla}_\beta h_{A\alpha}-r^2\tilde{\nabla}_A (r^{-2}h_{\alpha\beta})\right) dx^\alpha dx^\beta.
\end{align}
They satisfy the Regge-Wheeler equation \cite{Regge-Wheeler,Hung-Keller-Wang}
\begin{equation}\label{Regge Wheeler}
\begin{split}
{}^1\Box P-V_PP&=0,\ V_P=\frac{1}{r^2}\left(1-\frac{8M}{r}\right),\\
{}^2\Box Q-V_QQ&=0,\ V_Q=\frac{4}{r^2}\left(1-\frac{2M}{r}\right).\\
\end{split}
\end{equation}
For an odd tensor $h$, we define the following three sections:
\begin{align}
H_0:=h_{t\alpha}dx^\alpha,\ H_1:=\left(1-\frac{2M}{r}\right)h_{r\alpha}dx^\alpha\in\ \Gamma(\mathcal{L}(-1)),
\end{align} 
and
\begin{align}
H_2=-h_{\alpha\beta}dx^\alpha dx^\beta \in \ \Gamma(\mathcal{L}(-2)).
\end{align}
Under this notation, the harmonic gauge condition (\ref{HG}) can be written as
\begin{align}\label{wavegauge}
-\left(1-\frac{2M}{r}\right)^{-1}\nabla_t H_0+\nabla_r H_1+\frac{1}{2r}D^\dagger H_2+\frac{3}{r}H_1=0.
\end{align}
Also, $P$ and $Q$ can be expressed as
\begin{align}\label{def_P}
P=\left(1-\frac{2M}{r}\right)^{-1}r\nabla_t H_1-r\nabla_r H_0+H_0,
\end{align}
and
\begin{align}\label{def_Q}
Q=\frac{1}{r}DH_1+\left(1-\frac{2M}{r}\right)\nabla_r H_2.
\end{align}
\begin{lemma}\label{mainequation}
Under the above notation, (\ref{LVEEG}) can be written as
\begin{align}\label{equation_H0}
{}^1\Box H_0=V_0H_0-\frac{2M}{r^3}P,
\end{align}
and
\begin{equation}\label{eqiation_H1H2}
\begin{split}
{}^1\Box H_1&=V_1H_1+\frac{1}{r^2}\left(1-\frac{3M}{r}\right)D^\dagger H_2,\\
{}^2\Box H_2&=V_2H_2+\frac{2}{r^2}DH_1,
\end{split}
\end{equation}
where $V_0=\frac{1}{r^2}\left(1-\frac{2M}{r}\right),\ V_1=\frac{1}{r^2}\left( 5-\frac{18M}{r} \right)$ and $V_2=\frac{2}{r^2}$.
\end{lemma}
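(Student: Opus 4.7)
The plan is a direct tensor computation that decomposes everything using the warped product structure
\begin{align*}
g_M = \tilde g_{AB}dx^A dx^B + r^2\mathring\sigma_{\alpha\beta}dx^\alpha dx^\beta.
\end{align*}
The only non-vanishing mixed Christoffel symbols are
\begin{align*}
\Gamma^A_{\alpha\beta} = -r(\tilde\nabla^A r)\mathring\sigma_{\alpha\beta},\qquad \Gamma^\alpha_{A\beta} = \frac{1}{r}(\tilde\nabla_A r)\delta^\alpha_\beta,
\end{align*}
so that writing $\Box = \tilde g^{AB}\nabla_A\nabla_B + r^{-2}\mathring\sigma^{\alpha\beta}\nabla_\alpha\nabla_\beta$ and expanding its action on the $h_{A\alpha}$ and $h_{\alpha\beta}$ components of the odd tensor produces ${}^1\Box$ and ${}^2\Box$ applied to $H_0, H_1, H_2$, together with lower order terms depending on $\tilde\nabla r$ and on the other $H_i$. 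I would first record preliminary identities expressing $\Box h_{A\alpha}$ and $\Box h_{\alpha\beta}$ in terms of the bundle d'Alembertians and these cross terms, and then substitute.

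The second step expands the curvature coupling $2R_{acbd}h^{cd}$ using the Schwarzschild Riemann tensor; since $Ric=0$ only the Weyl part contributes, and every component is proportional to $M/r^3$ contracted with $\tilde g$ and $\mathring\sigma$. Pairing each Riemann component with the corresponding piece of $h^{cd}$ produces multiples of $H_0, H_1, H_2$ weighted by $M/r^3$. Adding these to the wave-operator terms, the diagonal contributions deliver $V_0, V_1, V_2$; in the $(H_1, H_2)$ block the off-diagonal pieces reassemble as $r^{-2}(1-3M/r)D^\dagger H_2$ and $2r^{-2}DH_1$ using the definitions of $D, D^\dagger$ and the odd-tensor constraint $\mathring\nabla^\alpha h_{A\alpha}=0$. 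For the $(t,\alpha)$ component the raw computation will produce, in addition to $V_0 H_0$, exactly the combination
\begin{align*}
-\frac{2M}{r^3}\Big(-r\nabla_r H_0 + \big(1-\tfrac{2M}{r}\big)^{-1}r\nabla_t H_1 + H_0\Big),
\end{align*}
which by (\ref{def_P}) equals $-(2M/r^3)P$, giving (\ref{equation_H0}).

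The main obstacle is the algebraic bookkeeping. Every covariant derivative of a mixed-index component introduces a warping factor $r^{-1}\tilde\nabla r$, and recovering the clean potentials $V_0, V_1, V_2$ requires precise cancellations between the connection and curvature contributions. The most subtle check is that all would-be $H_0$ terms cancel in the $(H_1,H_2)$ equations without invoking the harmonic gauge condition (\ref{HG}): the lemma asserts that equation (\ref{LVEEG}) itself decouples as in (\ref{equation_H0}) and (\ref{eqiation_H1H2}), with the $H_0$ evolution picking up the Regge-Wheeler forcing $P$ while $(H_1,H_2)$ close among themselves. Once this structural identification is in hand, the remainder of the proof is routine expansion, which I would relegate to the appendix.
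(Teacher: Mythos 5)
Your strategy differs from the paper's: you expand the Lichnerowicz operator $\Box h_{ab}+2R_{acbd}h^{cd}$ directly via the warped-product structure, whereas the paper quotes from \cite{Hung-Keller-Wang} the component form of $\delta Ric(h)=0$ in terms of $P$, $Q$ and $\tilde{Q}$ (e.g.\ $-\tilde{\nabla}_t(rP_\alpha)+\mathring{\nabla}^\beta Q_{\beta\alpha}=0$) and then substitutes the harmonic gauge condition to eliminate the cross terms. Your route is viable in principle and has the merit of being self-contained, but it fails at exactly the point you single out as ``the most subtle check'': the assertion that the $(H_1,H_2)$ system closes \emph{without} invoking (\ref{HG}) is false. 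The paper's proof uses the gauge condition in each of the three components, and for the $(r,\alpha)$ component this use is essential, not a convenience.

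Concretely, the only way $h_{t\alpha}$ can enter the $(r,\alpha)$ component of (\ref{LVEEG}) is through the quotient Christoffel symbols: since $\nabla_t h_{r\alpha}=\partial_t h_{r\alpha}-\tilde{\Gamma}^t_{tr}h_{t\alpha}$ with $\tilde{\Gamma}^t_{tr}=\frac{M}{r^2}\D^{-1}$, the term $g^{tt}\nabla_t\nabla_t h_{r\alpha}$ contributes
\begin{align*}
+\frac{2M}{r^2}\D^{-2}\partial_t h_{t\alpha}=\frac{2M}{r^2}\D^{-2}\partial_t H_{0,\alpha},
\end{align*}
while the curvature term $2R_{rc\alpha d}h^{cd}$ is diagonal (it produces only $h_{r\alpha}$) and the angular second derivatives couple $h_{r\alpha}$ only to $h_{\alpha\beta}$ through $\tilde{\nabla}^A r$, which is purely radial. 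Nothing cancels this $\partial_t H_0$ term, yet no such term appears in the first equation of (\ref{eqiation_H1H2}). Equivalently, using $\Box h_{ab}+2R_{acbd}h^{cd}=-2\delta Ric(h)_{ab}+\nabla_a\Gamma_b+\nabla_b\Gamma_a$ one finds that $r^2\D$ times the $(r,\alpha)$ component of (\ref{LVEEG}) equals $r^2({}^1\Box H_1-V_1H_1)-\left(1-\frac{3M}{r}\right)D^\dagger H_2$ only up to a residual multiple of $2M\Gamma_\alpha[h]$, which carries the $\partial_t H_0$ term above and also shifts the coefficients of $H_1$ and $D^\dagger H_2$. (For the $(t,\alpha)$ and $(\alpha,\beta)$ components the gauge contributions do happen to cancel, which is presumably what misled you.) So carried out faithfully, your computation would not reproduce the stated $V_1$ and source term; you must use (\ref{wavegauge}) to remove the residual, exactly as the paper does when it ``uses the harmonic gauge condition to replace $\partial_t H_{0,\alpha}$'' in the derivation of the $H_1$ equation.
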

The derivation of these equations is put in the appendix. We adapt the notation that $A\lesssim B$ if $A\leq CB$ for some constant $C$ and that $A\leq_s B$ if
\begin{align*}
\int_{S(t,r)} A dVol_{S^2}\leq \int_{S(t,r)} B dVol_{S^2}.
\end{align*}
For example, the Poincar\'e inequality implies $\frac{1}{r^2}|\Phi|^2\lesssim_s |\nablas \Phi|$. We end this section by the precise statement of the main theorem.
\begin{theorem}[precise version]\label{thm_main}
Let $h$ be an odd solution of (\ref{LVEEG}) under the harmonic gauge condition (\ref{HG}). Define
\begin{align*}
\hat{h}:=h-\sum_{m=-1,0,1}d_m K_m,
\end{align*}
where $d_m$ are three numbers determined by $h$ and $K_m$ correspond to linearlized angular momentum solutions(see section \ref{ddd}). Let $H_0,H_1,H_2,P$ and $Q$ be the sections derived from $\hat{h}$. For any $0<\delta<\frac{1}{8}$, denote $m=m(\delta)=\left \lceil -\log_2\delta \right \rceil+1$. Then there exists a constant $C$ which depends only on $\delta$ such that for any $p\in [\delta,2-\delta]$, 
\begin{align*}
E^p[H_0](\tau)&\leq CI_0\tau^{-2+p+\delta},\\
E^p[H_1,H_2](\tau)&\leq CI_1\tau^{-2+p+\delta},
\end{align*}
where
\begin{align*}\\
I_0&=\bar{E}[H_0]^{\leq 2+m}+\bar{E}[P]^{\leq 5+m},\\
I_1&=\bar{E}[H_1,H_2]^{\leq 2}+\bar{E}[Q]^{\leq 5}+\bar{E}[H_0]^{\leq 5+m}+\bar{E}[P]^{\leq 8+m}.
\end{align*}
\end{theorem}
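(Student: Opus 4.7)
The plan is to split the argument by angular mode: $\ell \geq 2$ versus $\ell = 1$. Odd symmetric 2-tensors have no $\ell = 0$ part, and because $\mathcal{L}(-2)$ carries no $\ell = 1$ sections, both $H_2$ and $Q$ vanish automatically on the $\ell = 1$ piece; since $D$, $D^\dagger$ and $\Box$ are diagonal in the angular decomposition, the two regimes can be treated independently. In each regime, the strategy is to first close a decay estimate for the gauge-invariant Regge-Wheeler scalars $P, Q$ via the vector-field method applied to (\ref{Regge Wheeler}), and then to recover the gauge-dependent sections $H_0, H_1, H_2$ from the coupled inhomogeneous system (\ref{equation_H0})--(\ref{eqiation_H1H2}), treating $P, Q$ as forcing.

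For $\ell \geq 2$, both Regge-Wheeler equations fit the Dafermos-Rodnianski framework: a $T$-energy estimate, a Morawetz integrated local energy decay with trapping at $r=3M$ absorbed through commutation, the red-shift estimate near $\mathcal{H}^+$, and the $r^p$-hierarchy near null infinity. The effective potentials $V_P + \ell(\ell+1)/r^2$ and $V_Q + (\ell(\ell+1)-4)/r^2$ are strictly positive for $\ell \geq 2$, giving non-degenerate bulk terms. Commuting with $\Gamma$ and iterating the $r^p$-hierarchy $m \sim \lceil -\log_2 \delta \rceil$ times, where each step pigeonholes an $E^2$ bound into a $\tau^{-1}$ gain, yields $E^p[P](\tau), E^p[Q](\tau) \lesssim \tau^{-2+p+\delta}$ for $p \in [\delta, 2-\delta]$. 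This accounts for the commutator counts $5+m$ for $P$ and $5$ for $Q$ that appear in $I_0, I_1$.

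With $P, Q$ controlled, I would treat (\ref{equation_H0}), (\ref{eqiation_H1H2}) as inhomogeneous wave equations for $H_0$, $H_1$, $H_2$. On $\ell \geq 2$ the potentials $V_0, V_1, V_2$ combine with the Poincar\'e-coercive angular Laplacian to give a positive stress-energy, so the same Morawetz/$r^p$ machinery applies. The forcing $-\frac{2M}{r^3}P$ in the $H_0$ equation has the favourable weight $r^{-3}$ and is absorbed via Cauchy-Schwarz together with Hardy and the $P$-estimate from the previous step, which explains the elevation of the required data norms on $P$ to $\bar E[P]^{\leq 8+m}$ in $I_1$. The pair $(H_1, H_2)$ must be handled jointly: because $D$ and $D^\dagger$ are formal adjoints by Lemma on angular operators, the cross terms $\frac{1}{r^2}(1-\frac{3M}{r})D^\dagger H_2$ and $\frac{2}{r^2}DH_1$ cancel up to a good sign in the energy identity for the combined stress-energy, so the system behaves like a single coercive wave equation whose gauge-invariant combination is precisely $Q$. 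A second run of the $r^p$-hierarchy then delivers $E^p[H_0] \lesssim I_0 \tau^{-2+p+\delta}$ and $E^p[H_1, H_2] \lesssim I_1 \tau^{-2+p+\delta}$.

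The main obstacle is the $\ell = 1$ case, treated in Section 5. Here $Q \equiv 0$ and $H_2 \equiv 0$, and the Regge-Wheeler scalar $P$ at $\ell = 1$ admits a three-dimensional space of stationary solutions corresponding to the linearised Kerr angular momentum; this is the non-decaying mode that must be removed by subtracting $d_m K_m$. I would define $d_m$ as the projection of the initial data onto this stationary kernel via a suitable conserved bilinear form for (\ref{Regge Wheeler}) on $\ell = 1$, so that $P[\hat h]$ is orthogonal to the kernel and decays by the vector-field argument at the rate $\tau^{-2+p+\delta}$. The delicate part is recovering $H_0, H_1$ at $\ell = 1$ from $P[\hat h]$ alone: the naive Morawetz estimate for $H_0, H_1$ degenerates at $\ell = 1$ because the effective potentials are too weak to dominate the angular term, so one must exploit the defining relation (\ref{def_P}) together with the harmonic gauge constraint (\ref{wavegauge}) to express the relevant derivatives of $H_0, H_1$ algebraically in terms of $P$ and a stationary remainder, the remainder being precisely what the subtraction of $K_m$ annihilates. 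Transferring the $P$-decay back to $H_0, H_1$ by integration in $t$ against the Killing field $T$ then closes the estimate, and combining the $\ell = 1$ and $\ell \geq 2$ pieces by orthogonality of the angular decomposition yields the full result.
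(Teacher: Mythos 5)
Your overall architecture --- split into $\ell\geq 2$ and $\ell=1$, estimate $P,Q$ first by the Dafermos--Rodnianski currents, then treat $H_0$ and the pair $(H_1,H_2)$ as wave equations with controlled sources, and handle $\ell=1$ by subtracting the $K_m$ and tying $H_1$ to $H_0$ through (\ref{def_P}) and (\ref{wavegauge}) --- is the same as the paper's. But there is a gap at the central technical step of Section 4. The cross terms in (\ref{eqiation_H1H2}) do \emph{not} cancel after symmetrization: adding $-\frac{2}{r^2}DH_1\cdot H_2\, g_{ab}$ to the combined stress-energy $2T_{ab}[H_1]+T_{ab}[H_2]$ cancels only the Minkowskian part of the coupling, leaving a residue of the form $-\frac{6M}{r^3}D^\dagger H_2\cdot\nabla_T H_1$ in the $T$-identity (and analogous terms in the Morawetz identity). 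This residue is of the same strength as the bulk terms one is trying to make positive, so the system does not simply behave like ``a single coercive wave equation.'' The paper's key device is to eliminate these residues by substituting the harmonic gauge condition (\ref{wavegauge}) for $D^\dagger H_2$, the definition (\ref{def_P}) for $\nabla_T H_1$, and (\ref{def_Q}) for $DH_1$ (with different currents on $[2M,3M]$ and $[3M,\infty)$), converting them into quadratic errors in $H_0$, $P$, $Q$, which are already controlled. This is precisely why $\bar{E}[H_0]^{\leq 5+m}$ and $\bar{E}[P]^{\leq 8+m}$ appear in $I_1$; your attribution of the $\bar{E}[P]^{\leq 8+m}$ term to the forcing $-\frac{2M}{r^3}P$ in the $H_0$ equation is incorrect (that forcing accounts for the $\bar{E}[P]^{\leq 5+m}$ in $I_0$, and $P$ enters $I_1$ only through the need for $H_0$ with three additional commutations). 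Moreover, even after the substitution, positivity of the resulting quadratic forms for low modes is delicate: the paper must verify discriminant positivity separately for $\ell=2,\dots,6$ and treat $\ell=2$ near the horizon with the special weight $h_1=5$. None of this follows automatically from the adjointness of $D$ and $D^\dagger$.

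On $\ell=1$ your plan is workable but roundabout, and one statement is off. The paper does not arrange for $P[\hat h]$ merely to decay: since every odd $\ell=1$ solution of (\ref{LVEE}) is a linear combination of the $K_m$ and a pure gauge solution, after subtracting $\sum_m d_m K_m$ one has $\hat h={}^X\pi$, so the gauge-invariant $P$ vanishes \emph{identically}. Then (\ref{equation_H0}) is source-free and $H_0$ decays by the standard argument; $H_1$ is recovered not ``from $P[\hat h]$ alone'' but from $H_0$, by integrating $\nabla_\rho(r^3H_1)$ out from the horizon using (\ref{wavegauge}) and (\ref{def_P}) with $P=0$. That pointwise reconstruction is what compensates for the failure of coercivity of $V_1+\frac{\ell(\ell+1)-1}{r^2}$ at the horizon when $\ell=1$, and it is the step your sketch should make explicit.
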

Together with Lemma \ref{Linfty}, we immediately have the following corollary:
\begin{corollary}
Under the same assumption as above, we have
\begin{align*}
\|r^{p/2} H_0 \|^2_{L^\infty (\Sigma_\tau)}&\leq C I_2\tau^{-2+p+\delta},\\
\|r^{p/2} H_1 \|^2_{L^\infty (\Sigma_\tau)}+\|r^{p/2} H_2\|^2_{L^\infty (\Sigma_\tau)}&\leq CI_3\tau^{-2+p+\delta},
\end{align*}
where
\begin{align*}\\
I_2&=\bar{E}[H_0]^{\leq 4+m}+\bar{E}[P]^{\leq 7+m},\\
I_3&=\bar{E}[H_1,H_2]^{\leq 4}+\bar{E}[Q]^{\leq 7}+\bar{E}[H_0]^{\leq 7+m}+\bar{E}[P]^{\leq 10+m}.
\end{align*}
\end{corollary}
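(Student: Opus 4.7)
The corollary is a direct consequence of the $L^\infty$ Sobolev embedding of Lemma \ref{Linfty} combined with the energy decay provided by Theorem \ref{thm_main}. My plan is as follows.

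First, I would apply Lemma \ref{Linfty} pointwise on $\Sigma_\tau$ to each of the sections $H_0$, $H_1$, $H_2$ at the appropriate weight $p$. This gives, at every $r \geq r_1$,
\begin{align*}
r^{p}|H_i(\tau,r)|^2 \leq C\, E^{p}[H_i]^{\leq 2}(\tau),
\end{align*}
so that taking the supremum over $\Sigma_\tau$ reduces the desired $L^\infty$ estimate to a bound on the second-order commuted energy $E^{p}[H_i]^{\leq 2}(\tau)$.

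Next, I would apply Theorem \ref{thm_main} to the Lie-derivatives $\mathcal{L}_\Gamma^j \hat{h}$ for $j=0,1,2$. Because the commutators $\Gamma=\{T,\Omega_1,\Omega_2,\Omega_3\}$ are Killing vector fields, commuting with $\mathcal{L}_\Gamma$ preserves the Lichnerowicz d'Alembertian equation (\ref{LVEEG}), the harmonic gauge condition (\ref{HG}), and the odd/even decomposition. Moreover, the constructions of $P$ and $Q$ in (\ref{def_P})--(\ref{def_Q}) are covariant and commute with $\mathcal{L}_\Gamma$, and the linearized angular momentum solutions $K_m$ are themselves invariant under $T$ and transform among themselves under $\Omega_i$, so subtracting the appropriate combination $\sum d_m K_m$ remains compatible with the commutation. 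Hence Theorem \ref{thm_main} applies to each $\mathcal{L}_\Gamma^j \hat{h}$ with initial data controlled by the same norms but with two additional commutators.

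Summing over $j=0,1,2$ replaces the indices $2+m,5+m$ in $I_0$ by $4+m,7+m$, producing exactly $I_2$, and replaces $2,5,5+m,8+m$ in $I_1$ by $4,7,7+m,10+m$, producing exactly $I_3$. Combining this with the pointwise reduction of the first step yields the claimed $L^\infty$ decay rate $\tau^{-2+p+\delta}$. There is no genuine obstacle here beyond the bookkeeping of commutator counts; the two substantive inputs (the Sobolev embedding and the energy decay theorem) have already been established.
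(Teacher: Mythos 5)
Your proposal is correct and matches the paper's intended argument: the paper gives no explicit proof, stating only that the corollary follows ``immediately'' from Lemma \ref{Linfty} together with Theorem \ref{thm_main}, which is precisely the combination you carry out (Sobolev embedding reducing the $L^\infty$ bound to $E^p[\cdot]^{\leq 2}$, then commuting twice with the Killing fields to shift every derivative index in $I_0$, $I_1$ up by two). Your remark that the $\ell=1$ subtraction is compatible with commutation, since $\mathcal{L}_\Gamma K_m$ stays in the span of the $K_{m'}$ and the residual pure-gauge part remains pure gauge, is a correct and worthwhile detail that the paper leaves implicit.
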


\section{Estimate for $H_0$}
In this section we prove the decay estimates for $H_0$ based on (\ref{equation_H0}) and $P$ is treated as an error term. We use the vector field method and construct currents $J_i[H_0]_a$ with certain positivity. In subsection \ref{sr_H0} and \ref{sM_H0}, we use the red-shift vector and the Morawetz vector introduced by Dafermos and Rodnianski \cite{Dafermos-Rodnianski}. The $r^p$-hierarchy estimate of Dafermos and Rodnianski \cite{Dafermos-Rodnianski2} is used in subsection \ref{sp_H0}. In subsection \ref{sc_H0}, we combine the previous currents in the manner of \cite{Ionescu-Klainerman}. The decay of $H_0$ is presented in subsection \ref{sd_H0}. The construction of this section holds for any spherical harmonic number $\ell\geq 1$.
\subsection{$T$ vector field}\label{sT_H0}
By using $T=\frac{\partial}{\partial t}$ as a multiplier, we define
\begin{align}
J_1[H_0]_a&:=\Big(T_{ab}[H_0]-\frac{1}{2}V_0 (\nabla H_0)^2 g_{ab}\Big)T^b,\\
e_1[H_0]     &:=J_1[H_0]\cdot n_{\Sigma_\tau}.
\end{align}
From direct computation,
\begin{align*}
\Div J_1[H_0]&=\nabla_T H_0\cdot \left( -\frac{2M}{r^3}P\right).
\end{align*}
From the Poincar\'e inequality,
\begin{align*}
e_1[H_0]&\approx_s e^T[H_0] \approx  |\nabla_L H_0|^2+\left(1-\frac{2M}{r}\right)|\nabla_\rho H_0|^2+|\nablas H_0|^2.
\end{align*}
Because $T$ is a null vector on the horizon, there is degeneracy in the $\frac{\partial}{\partial \rho}$ direction. This degeneracy can be removed by the red-shift vector.
\subsection{red-shift vector field}\label{sr_H0}
Consider a vector $Y$ such that
\begin{align*}
Y\Big|_{r=2M}&=-2\frac{\partial}{\partial R},\\
\nabla_Y Y\Big|_{r=2M}&=-sT-\sigma Y,
\end{align*}
for some $s,\sigma>0$ to be determined. The red-shift current $J_2[H_0]$ for $H_0$ is defined by
\begin{align}
J_{2}[H_0]_a&:=T_{ab}[H_0] Y^b,\\
K_2[H_0]&:=\Div  J_2[H_0],\\
e_2[H_0]&:=J_2[H_0]\cdot n_{\Sigma_\tau}.
\end{align}
We denote the contribution of $P$ in $K_2[H_0]$ by $Err_2[H_0]$.
\begin{align*}
K_2[H_0]&=\mathring{K_2}[H_0]+Err_2[H_0],\\
Err_2[H_0]&=\nabla_Y H_0\cdot \left(-\frac{2M}{r^3}P\right).
\end{align*}
From (\ref{g_redshift}), on the horizon $r=2M$ we have
\begin{align*}
\mathring{K}_2[H_0]=&\left(\frac{s}{2}|\nabla_v H_0|^2+\frac{1}{2M}|\nabla_R H_0|^2+\frac{2}{M}\nabla_R H_0\cdot\nabla_v H_0+\frac{\sigma}{2}|\nablas H_0|^2\right)\\
                 &+V_0H_0\cdot\nabla_Y H_0.
\end{align*}
From the Poincar\'e inequality, one can pick $s=\sigma$ large enough such that $\mathring{K}_2[H_0]\gtrsim_s e[H_0]$ on the horizon. By extending $Y$ smoothly to the exterior region such that $Y$ is non-spacelike and $Y=0$ as $r\geq r_1$, we obtain
\begin{align}
\mathring{K}_2[H_0]\gtrsim_s e[H_0]\ \textup{as}\ 2M\leq r\leq r_0
\end{align}
for some $r_0\in(2M,r_1)$ from a continuity argument. In summary, we get
\begin{lemma}
\begin{align*}
\begin{array}{cc}
\mathring{K}_2[H_0] \gtrsim_s e[H_0]\ & \textup{in}\ [2M,r_0],\\
\Big|\mathring{K}_2[H_0]\Big| \lesssim e[H_0]\ & \textup{in}\ [r_0,r_1],\\
\mathring{K}_2[H_0] = 0\ & \textup{in}\  [r_1,\infty ).
\end{array}
\end{align*}
For the boundary term, $e_2[H_0]$ is non-negative and
\begin{equation*}
\begin{split}
e_2[H_0]+e_1[H_0]&\gtrsim_s e[H_0].
\end{split}
\end{equation*}
\end{lemma}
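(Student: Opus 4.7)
The plan is to verify the three pointwise estimates for $\mathring{K}_2[H_0]$ region by region and then to check the two properties of the boundary flux $e_2[H_0]$. The part of the statement covering $r\ge r_1$ is immediate once $Y$ is arranged to vanish there, and on the transition region $[r_0,r_1]$ the bound $|\mathring{K}_2[H_0]|\lesssim e[H_0]$ follows at once from the smoothness of $Y$, the boundedness of its first derivatives, and the fact that the potential $V_0$ is bounded. The real content is therefore the positivity on the horizon slab $[2M,r_0]$ together with the coercivity of the boundary flux.

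First I would exploit the explicit formula for $\mathring{K}_2[H_0]$ at $r=2M$ already displayed above. Since $V_0=\frac{1}{r^2}(1-\frac{2M}{r})$ vanishes on the horizon, the zero-order term drops out there and one is left with
\[
\frac{s}{2}|\nabla_v H_0|^2+\frac{1}{2M}|\nabla_R H_0|^2+\frac{2}{M}\nabla_R H_0\cdot\nabla_v H_0+\frac{\sigma}{2}|\nablas H_0|^2.
\]
Viewed as a quadratic form in $(\nabla_v H_0,\nabla_R H_0)$, the associated $2\times 2$ Gram matrix has determinant $\frac{s}{4M}-\frac{1}{M^2}$, which is strictly positive as soon as $s$ is large enough; in that regime the form controls $|\nabla_v H_0|^2+|\nabla_R H_0|^2$ pointwise. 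Since $H_0$ is a section of $\mathcal{L}(-1)$ supported on modes $\ell\ge 1$, the Poincar\'e inequality $\frac{1}{r^2}|H_0|^2\lesssim_s |\nablas H_0|^2$ is available, so choosing $\sigma=s$ large enough yields $\mathring{K}_2[H_0]\gtrsim_s e[H_0]$ on $\{r=2M\}$. Smoothness of all the relevant coefficients in $r$, combined with the compactness of a small neighborhood of the horizon, then propagates this lower bound to a slab $[2M,r_0]$ for some $r_0\in(2M,r_1)$ to be chosen at the end.

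For the boundary flux, the non-negativity of $e_2[H_0]=T_{ab}[H_0]Y^a n_{\Sigma_\tau}^b$ is a direct consequence of the dominant energy condition for $T_{ab}[H_0]$ recorded just before the definition of $e^W[\Phi]$, because $Y$ is extended to be future-directed non-spacelike on its entire support while $n_{\Sigma_\tau}$ is future timelike. The coercivity $e_2[H_0]+e_1[H_0]\gtrsim_s e[H_0]$ is then checked region by region: on $\{r\ge r_1\}$ we have $Y\equiv 0$ but $T$ is strictly timelike, and $e_1[H_0]\approx_s e^T[H_0]\approx e[H_0]$ after one more application of Poincar\'e to $|H_0|^2/r^2$; on $\{2M\le r<r_1\}$, $T$ is null at the horizon so $e_1[H_0]$ degenerates in the $\partial_\rho$ direction, but the choice $Y|_{r=2M}=-2\partial_R$ restores a multiple of $|\nabla_\rho H_0|^2$, closing the estimate via a final compactness argument.

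The single delicate point is the coupling of the two parameters: $s$ must be taken large enough to beat the indefinite cross term $\frac{2}{M}\nabla_R H_0\cdot\nabla_v H_0$, after which $\sigma$ must be large enough that the $|\nablas H_0|^2$ component of the form, combined with Poincar\'e, absorbs the zero-order perturbation $V_0 H_0\cdot\nabla_Y H_0$ that reappears as soon as one leaves the horizon. This is the standard Dafermos--Rodnianski red-shift mechanism, and the only input specific to the present setting is that $H_0$ is a spherical one-form supported on $\ell\ge 1$, which is precisely what makes the Poincar\'e step legitimate.
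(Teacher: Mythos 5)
Your proposal is correct and follows essentially the same route as the paper: the explicit horizon formula for $\mathring{K}_2[H_0]$ coming from (\ref{g_redshift}), positivity of the quadratic form in $(\nabla_v H_0,\nabla_R H_0)$ for $s$ large, the Poincar\'e inequality to absorb the zeroth-order term with $\sigma=s$ large, and a continuity argument to extend to $[2M,r_0]$, with the boundary claims following from the energy condition and the non-degeneracy of $Y$ at the horizon. The paper states the lemma as a summary of exactly this discussion, so your write-up simply supplies the (routine) details it leaves implicit.
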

\subsection{Morawetz vector field}\label{sM_H0}
For any radial function $f(r)$, let $X=f(r)\frac{\partial}{\partial r_*}=f(r)\left(1-\frac{2M}{r}\right)\frac{\partial}{\partial r}$. Define $g(r):=\frac{1}{r^3}\left(1-\frac{2M}{r}\right) \left(1-\frac{3M}{r} \right)^2$ and $\epsilon_1>0$ be a small positive number to be determined. Let $\omega^X=\left(1-\frac{2M}{r}\right)\left(\frac{\partial f}{\partial r}+\frac{2f}{r}\right)$ and define the Morawetz current as
\begin{equation}
\begin{split}
J_3[H_0]_a:=&\Big(T_{ab}[H_0]-\frac{1}{2}V_0|H_0|^2\Big)X_b+\frac{1}{4}\omega^X \nablab_a |H_0|^2-\frac{1}{4}\nablab_a \omega^X |H_0|^2\\
           &-\frac{1}{2}\epsilon_1 g\nabla_a |H_0|^2+\frac{1}{2}\nabla_a (\epsilon_1 g)|H_0|^2,\\
\end{split}
\end{equation}
\begin{align}           
K_3[H_0]&:=\Div J_3[H_0],\\
e_3[H_0]&:=J_3[H_0]\cdot n_{\Sigma_\tau}.
\end{align}
From (\ref{g_Morawetz}), $K_3[H_0]$ is of the form
\begin{align*}
{K}_3[H_0]=&\mathring{K}_3[H_0]+Err_3[H_0],\\
\mathring{K}_3[H_0]=&\left(1-\frac{2M}{r}\right)\left(\left(1-\frac{2M}{r}\right)\frac{\partial f}{\partial r}-\epsilon_1 g\right)|\nabla_{r}H_0|^2+\epsilon_1 g\left(1-\frac{2M}{r}\right)^{-1}|\nabla_{ t}H_0|^2\\
&+\left(\frac{f}{r}\left(1-\frac{3M}{r}\right)-\epsilon_1 g\right)|\slashed{\nabla}H_0|^2+\left(-\frac{1}{4} \Box\omega^X -\frac{1}{2}f\left(1-\frac{2M}{r}\right)\frac{\partial V_0}{\partial r}-\frac{M}{r^2}fV_0 \right)|H_0|^2\\
&+\epsilon_1\left(\frac{1}{2}\Box g-gV_0\right)|H_0|^2,\\
Err_3[H_0]=&\left(\nabla_X H_0+\frac{1}{2}\omega^X H_0-\epsilon_1g H_0\right)\cdot \left(-\frac{2M}{r^3} P\right).
\end{align*}
We use the multiplier $f=(1+\frac{3M}{2r})^2\left(1-\frac{3M}{r}\right)$ and then $\mathring{K}_3[H_0]\geq_s 0$ as $\epsilon_1=0$. In particular, after using the Poincar\'e inequality,
\begin{align*}
\mathring{K}_3[H_0]&\geq_s \left(-\frac{1}{4} \Box\omega^X -\frac{1}{2}f\left(1-\frac{2M}{r}\right)\frac{\partial V_0}{\partial r}-\frac{M}{r^2}f V_0 +\frac{f}{r^3}\left(1-\frac{3M}{r}\right)\right)|H_0|^2\\
                 &=\frac{1}{4r^3}\left( 8-24\left(\frac{M}{r}\right)-54\left(\frac{M}{r}\right)^2+108\left(\frac{M}{r}\right)^3+513\left(\frac{M}{r}\right)^4-891\left(\frac{M}{r}\right)^5 \right)|H_0|^2,                 
\end{align*}
which is positive in $[2M,\infty)$. Since $f -1\approx -\frac{1}{r^2},\ \frac{\partial f}{\partial r}\approx \frac{1}{r^3},\ \omega^X\approx \frac{2}{r}$ near the spatial infinity, by taking  $\epsilon_1>0$ small enough, one can make $\mathring{K}_3[H_0]$ coercive for bounded $r$ except the degeneracy at the photon sphere and at the horizon. In summary, we have
\begin{lemma}
$\mathring{K}_3[H_0]$ is non-negative. Moreover, in $[r_0,\infty)$,
\begin{align*}
\mathring{K}_3[H_0]\gtrsim_s& \frac{1}{r^3}|\nabla_r H_0|^2+\frac{1}{r^3}\left(1-\frac{3M}{r}\right)^2|\nabla_t H_0|^2+\frac{1}{r}\left(1-\frac{3M}{r}\right)^2|\nablas H_0|^2+\frac{1}{r^3}|H_0|^2.
\end{align*}
\end{lemma}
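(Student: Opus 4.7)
The strategy is to verify the two claims of the lemma by substituting the multiplier $f(r)=\bigl(1+\tfrac{3M}{2r}\bigr)^2\bigl(1-\tfrac{3M}{r}\bigr)$ into the already-expanded formula for $\mathring{K}_3[H_0]$, and then tuning the parameter $\epsilon_1$.

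First I would establish the non-negativity of $\mathring{K}_3[H_0]$ when $\epsilon_1=0$. In that case the coefficient of $|\nabla_r H_0|^2$ reduces to $\D^2 \partial_r f$, which is manifestly non-negative because a direct differentiation of $f$ yields $\partial_r f = \tfrac{27M^2}{2r^3}+\tfrac{81M^3}{4r^4}>0$ on $[2M,\infty)$. The coefficient of $|\slashed{\nabla}H_0|^2$ becomes $\tfrac{f}{r}\bigl(1-\tfrac{3M}{r}\bigr)=\tfrac{1}{r}\bigl(1+\tfrac{3M}{2r}\bigr)^2\bigl(1-\tfrac{3M}{r}\bigr)^2\geq 0$, vanishing only at the photon sphere. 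The coefficient of $|\nabla_t H_0|^2$ vanishes identically. So the only possibly negative contribution is the zeroth-order term in $|H_0|^2$, and here I would use the Poincaré inequality $\tfrac{1}{r^2}|H_0|^2\lesssim_s|\slashed{\nabla}H_0|^2$ to transfer $\tfrac{f}{r^3}\bigl(1-\tfrac{3M}{r}\bigr)|H_0|^2$ out of the angular term. After simplification the remaining $|H_0|^2$ coefficient is $\tfrac{1}{4r^3}\,P(M/r)$, where $P(u)=8-24u-54u^2+108u^3+513u^4-891u^5$. The main computational step is to check that $P(u)\geq 0$ on $[0,1/2]$; I would do this by evaluating at the endpoints and the photon sphere value $u=1/3$, computing $P'$ to locate critical points, and confirming positivity by numerical bounds on each subinterval (a monotonicity/Sturm-style argument rather than a closed-form factorization).

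Next, to upgrade from bare non-negativity to the coercive estimate on $[r_0,\infty)$, I would turn on a small $\epsilon_1>0$. The $\epsilon_1 g\,\D^{-1}|\nabla_t H_0|^2$ contribution is pointwise non-negative and degenerates precisely as $(1-\tfrac{3M}{r})^2$ via the factor $g$, matching the required weight. The $-\epsilon_1 g$ perturbations of the radial and angular coefficients reduce their positivity but only by a controlled amount of order $\epsilon_1 r^{-3}$, which is dominated by $\D^2 \partial_r f\approx r^{-3}$ in the radial slot and by $\tfrac{f}{r}(1-\tfrac{3M}{r})$ in the angular slot on any set $\{r\geq r_0\}$ for $r_0>2M$. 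Finally, the new zeroth-order contribution $\epsilon_1\bigl(\tfrac{1}{2}\Box g - gV_0\bigr)|H_0|^2$ is bounded by $C\epsilon_1 r^{-5}|H_0|^2$, which is absorbed into the already-positive $\tfrac{1}{4r^3}P(M/r)|H_0|^2$ term provided $\epsilon_1$ is sufficiently small. This gives the displayed coercivity.

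The final piece is the asymptotic behavior as $r\to\infty$: I would verify directly from the expression $f=1-\tfrac{27M^2}{4r^2}-\tfrac{27M^3}{4r^3}$ that $f-1\approx -r^{-2}$, $\partial_r f\approx r^{-3}$, and $\omega^X = \D\bigl(\partial_r f + \tfrac{2f}{r}\bigr)\approx \tfrac{2}{r}$, as quoted in the text. These asymptotics ensure that the coefficients in $\mathring{K}_3[H_0]$ do not decay faster than advertised, so the bound is sharp at infinity.

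The main obstacle is the polynomial positivity check for $P(u)$ on $[0,1/2]$: the polynomial has mixed signs and achieves small values near the endpoints (e.g.\ $P(1/2)=7/32$), so a careful interval analysis is required rather than a one-line inequality. Once this is in hand, the perturbative argument in $\epsilon_1$ is routine and the asymptotic verification is a direct Taylor expansion.
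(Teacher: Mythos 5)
Your proposal is correct and follows essentially the same route as the paper: the same multiplier $f=\bigl(1+\tfrac{3M}{2r}\bigr)^2\bigl(1-\tfrac{3M}{r}\bigr)$, the same use of the Poincar\'e inequality to fold the angular term into the zeroth-order coefficient, the same quintic $\tfrac{1}{4r^3}P(M/r)$ whose positivity on $[2M,\infty)$ is the only nontrivial check, and the same small-$\epsilon_1$ perturbation with $g=\tfrac{1}{r^3}\bigl(1-\tfrac{2M}{r}\bigr)\bigl(1-\tfrac{3M}{r}\bigr)^2$ to recover the degenerate $|\nabla_t H_0|^2$ weight. You in fact supply slightly more detail than the text (the explicit $\partial_r f>0$ and the endpoint value $P(1/2)=7/32$), and the only point worth making explicit in a write-up is that one should reserve a fixed fraction of the angular term when applying the Poincar\'e inequality so that the $\tfrac{1}{r}\bigl(1-\tfrac{3M}{r}\bigr)^2|\nablas H_0|^2$ coercivity survives.
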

The next lemma concerns the boundary terms.
\begin{lemma}\label{H0_Morawetz_boundary}
There exists $C_0\geq 1$ such that we have the following estimates:
\begin{align*}
e^X[H_0]\geq_s -C_0 e_1[H_0].
\end{align*}
For any $R_1$ large enough, 
\begin{align*}
\left|\int_{\Sigma_\tau} e_3[H_0]-e^X[H_0] dVol_{\Sigma_\tau}\right|\leq &C_0\int_{\Sigma_\tau\cap\{2M\leq r\leq R_1\}} e_1[H_0]dVol_{\Sigma_\tau}\\
                                                                    +&C_0\int_{\Sigma_\tau\cap\{R_1\leq r\}} e^{L}[H_0]dVol_{\Sigma_\tau}.
\end{align*}
\end{lemma}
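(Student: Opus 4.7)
The proof of both inequalities reduces to a pointwise computation and Cauchy--Schwarz. The plan is to expand $e_3[H_0] - e^X[H_0]$ from the definition of $J_3[H_0]$ and to control each piece separately in the compact and asymptotic regions.

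For the lower bound $e^X[H_0]\geq_s -C_0 e_1[H_0]$, I would decompose $X = \tfrac{f}{2}(L-\underline{L})$ in the null frame. Since $|X|_g^2 = f^2(1-2M/r)$ is uniformly bounded on the exterior and vanishes on the horizon, the dominant energy condition for $T_{ab}[H_0]$ applied to the future timelike $n_{\Sigma_\tau}$ yields the pointwise estimate $|T_{ab}[H_0]\,X^b n^a| \leq C_0\, T_{ab}[H_0]\,T^b n^a$. Combined with the Poincar\'e inequality to absorb the $V_0|H_0|^2$ correction appearing in $e_1$ compared to $e^T$, this gives the stated bound.

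For the second inequality a direct expansion from the definition of $J_3$ gives
\begin{align*}
e_3[H_0] - e^X[H_0] &= -\tfrac{V_0}{2}|H_0|^2 g(X,n) + \Big(\tfrac{\omega^X}{4} - \tfrac{\epsilon_1 g}{2}\Big)\nabla_n |H_0|^2\\
&\quad - \tfrac{1}{4}(\nabla_n \omega^X)|H_0|^2 + \tfrac{1}{2}\nabla_n(\epsilon_1 g)|H_0|^2,
\end{align*}
which is a sum of weighted $|H_0|^2$ terms and cross terms of the form $F(r)\,H_0\cdot\nabla_n H_0$. Split $\Sigma_\tau$ along $r = R_1$. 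In the compact region $\{2M\leq r\leq R_1\}$ all coefficients are uniformly bounded, so Cauchy--Schwarz applied to the cross terms, together with Poincar\'e $|H_0|^2/r^2 \lesssim_s |\nablas H_0|^2$, bounds the integrand pointwise by $C_0 e_1[H_0]$. In the asymptotic region $\{r\geq R_1\}$ the slice is $\{t=\tau\}$, so $n\propto T$; because $\omega^X$ and $\epsilon_1 g$ depend only on $r$, the two terms of the form $(\nabla_n F)|H_0|^2$ vanish outright. The surviving $V_0|H_0|^2$ and $\omega^X H_0\cdot\nabla_n H_0$ pieces are then handled by a weighted Cauchy--Schwarz, using the asymptotics $V_0, \omega^X/r \sim 1/r^2$ and $\omega^X r \sim 2$, which reduces everything to $|H_0|^2/r^2$ plus the $\nabla_L$ and $\nablas$ components of $\nabla_n H_0$; these match $e^L[H_0]$ up to a constant.

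The main technical obstacle, and the reason the asymptotic bound involves $e^L$ rather than the weaker $e_1$, is to absorb the $\underline{L}$--component of $\nabla_n H_0$ appearing in the cross term into $e^L[H_0]$. This requires choosing $R_1$ large enough that $\omega^X r$ is as close to $2$ as necessary and tracking the null decomposition of $T$ and $n$ carefully, using that on $\{t=\tau\}$ one has $2T_{ab}[H_0]L^b n^a = (1-2M/r)^{-1/2}|\nabla_L H_0|^2 + (1-2M/r)^{1/2}|\nablas H_0|^2$, so that after integration the mixed $\nabla_L H_0\cdot\nabla_{\underline{L}}H_0$ terms from Cauchy--Schwarz collapse to multiples of $|\nabla_L H_0|^2 + |\nablas H_0|^2$ modulo strictly lower--order contributions absorbed by Poincar\'e. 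Once this bookkeeping is done and $C_0$ is enlarged if necessary, the two regional estimates combine to give the lemma.
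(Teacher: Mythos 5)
Your treatment of the first inequality (null decomposition of $X$, positivity of $T_{ab}[H_0]$ on future causal pairs, Poincar\'e to pass from $e^T$ to $e_1$) and your expansion of $e_3[H_0]-e^X[H_0]$ agree with the paper, and the compact-region estimate is fine. The gap is in the region $\{r\geq R_1\}$, exactly at the point you yourself flag as the main obstacle: the resolution you sketch does not work. Writing $n_{\Sigma_\tau}=\D^{-1/2}\partial_t$ and $\nabla_t H_0=\nabla_LH_0-\D\nabla_rH_0$, the surviving cross term $\frac12(\omega^X-2\epsilon_1 g)H_0\cdot\nabla_{n}H_0$ contains the piece
\begin{align*}
-\tfrac12\D^{1/2}(\omega^X-2\epsilon_1 g)\,H_0\cdot\nabla_rH_0\ \approx\ -\tfrac{1}{2r}\D^{1/2}\nabla_r|H_0|^2,
\end{align*}
and no pointwise Cauchy--Schwarz, however the weights are tuned, can bound this by $e^L[H_0]$: as your own displayed identity shows, $e^L[H_0]$ consists only of $|\nabla_LH_0|^2$ and $|\nablas H_0|^2$ and has no $\nabla_{\underline L}$ (equivalently $\nabla_r$) content, so the $|\nabla_rH_0|^2$ that Cauchy--Schwarz produces has nowhere to be absorbed. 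Taking $R_1$ large or $\omega^Xr$ close to $2$ does not change this, and there are no ``mixed $\nabla_LH_0\cdot\nabla_{\underline L}H_0$ terms'' available to ``collapse'': the obstruction is a genuine first-order term in $\nabla_rH_0$ multiplying $H_0$.

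The missing idea is an integration by parts in $r$ over $\Sigma_\tau\cap\{r\geq R_1\}$. Since the offending term is a total radial derivative with a radial coefficient, integrating by parts converts it into a zeroth-order bulk term of size $\frac{1}{r^2}\D^{1/2}|H_0|^2$, which Poincar\'e bounds by $\D^{1/2}|\nablas H_0|^2\lesssim_s e^L[H_0]$, plus a boundary term on $\{r=R_1\}$ that is absorbed into the compact-region integral $C_0\int_{\{2M\le r\le R_1\}}e_1[H_0]$. This is precisely how the paper closes the estimate, and it is also why the lemma is stated as a bound on integrals over $\Sigma_\tau$ rather than a pointwise (or sphere-averaged) bound. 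A further minor slip: in the far region the term $-\frac12V_0|H_0|^2\,X\cdot n_{\Sigma_\tau}$ does not survive at all, since there $X\propto\partial_r$ is tangent to $\{t=\tau\}$ and hence orthogonal to $n_{\Sigma_\tau}$; only the cross term needs to be estimated.
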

\begin{proof}
Because $\left(1-\frac{2M}{r}\right)\frac{\partial}{\partial r}+\frac{\partial}{\partial t}$ is a null vector, $e^{X+|f|T}[H_0]\geq 0$. Then the first inequality follows since $e_1$ is comparable to $e^T$. For the second inequality, note that
\begin{align*}
e_3[H_0]-e^X[H_0]=-\frac{1}{2}V_0 |H_0|^2 X\cdot n_{\Sigma_\tau}+\frac{1}{2}(\omega^X-2\epsilon_1 g) H_0\cdot\nabla_{n_{\Sigma_\tau}}H_0-\frac{1}{4}\nablab_{n_{\Sigma_\tau}}(\omega^X-2\epsilon_1 g)  |H_0|^2.
\end{align*}
For bounded $r$, each term is bounded by $e^T[H_0]$. For $r$ large, the only nonzero term is $\frac{1}{2}(\omega^X-2\epsilon_1 g) H_0\cdot\nabla_{n_{\Sigma_\tau}}H_0$. 
\begin{align*}
&\frac{1}{2}(\omega^X-2\epsilon_1 g) H_0\cdot\nabla_{n_{\Sigma_\tau}}H_0\\
=& \frac{1}{2}\D^{-1/2}(\omega^X-2\epsilon_1 g) H_0\cdot\nabla_{t}H_0\\
                                                   =&\frac{1}{2}\D^{-1/2}(\omega^X-2\epsilon_1 g) H_0\cdot\nabla_{L}H_0-\frac{1}{2}\left(1-\frac{2M}{r}\right)^{1/2}(\omega^X-2\epsilon_1 g) H_0\cdot\nabla_{r}H_0\\
                                                   =& \frac{1}{r}H_0\cdot\nabla_L H_0-\frac{1}{2r}\D^{1/2}\nablab_r|H_0|^2+l.o.t.
\end{align*}
The first term is controlled by $e^L[H_0]$ as
\begin{align*}
e^L[H_0]&=\frac{1}{2}\left(\D^{-1/2}|\nabla_L H_0|^2+\D^{1/2}|\nablas H_0|^2\right)\\
      &\geq_s\frac{1}{2}\left(\D^{-1/2}|\nabla_L H_0|^2+\frac{1}{r^2}\D^{1/2}|H_0|^2\right)\\
      &\gtrsim \frac{1}{r}|\nabla_L H_0||H_0|.
\end{align*}
We integrate by parts the last term,
\begin{align*}
&\left| \int_{\Sigma_\tau\cap\{R_1\leq r\}} \frac{1}{2r}\D^{1/2}\nablab_r|H_0|^2 dVol_{\Sigma_\tau}\right|\\
= &\left| \int_{\Sigma_\tau\cap\{R_1\leq r\}} \frac{1}{2r^2}\D^{1/2}|H_0|^2 dVol_{\Sigma_\tau}+ \textup{boundary term}\right|\\
\lesssim_s &\int_{\Sigma_\tau\cap\{R_1\leq r\}} e^L[H_0] dVol_{\Sigma_\tau}+ \textup{boundary term}.
\end{align*}
By absorbing the boundary term into $e^T[H_0]$, the result follows.
\end{proof}
\subsection{$r^p$-hierarchy}\label{sp_H0}
Fix two small numbers $\delta_1,\delta_2>0$. The constants below can depend on $\delta_1$ and $\delta_2$. For any $p\in [\delta_1,2-\delta_2]$, define
\begin{align}
\tilde{J}^p_4[H_0]_a:=\frac{r^p}{r^2\left(1-\frac{2M}{r}\right)}T_{ab}[rH_0]L^b-\frac{Mr^p}{r^3\left(1-\frac{2M}{r}\right)}|rH_0|^2L_a-\frac{1}{2}\frac{r^p}{r^2\left(1-\frac{2M}{r}\right)}V_0 |rH_0|^2L_a.
\end{align}
From (\ref{g_rp}),
\begin{align*}
&\Div \tilde{J}_4^p[H_0]\\
=&\left(1-\frac{2M}{r}\right)^{-2}r^{p-3}\left( \frac{p}{2}\left(1-\frac{2M}{r}\right)-\frac{M}{r} \right)|\nabla_L (rH_0)|^2+\left(1-\frac{p}{2}\right)r^{p-3}|\slashed{\nabla}(rH_0)|^2\\
                     &+\left( (3-p)Mr^{p-6}-\frac{r^p}{r^2\left(1-\frac{2M}{r}\right)}\frac{1}{r}\left(1-\frac{M}{r}\right)V_0-\left(1-\frac{2M}{r}\right)\partial_r\left( \frac{r^p}{2r^2 \D}V_0 \right) \right)|(rH_0)|^2\\
                     &+\frac{r^p}{r^2\D}\Big(\nabla_L (rH_0)\Big)\cdot r\left(-\frac{2M}{r^3}P\right).
\end{align*}
Because $V_0\approx 1/r^2$ and $\delta_1\leq p\leq 2-\delta_2$,
\begin{align*}
\Div \tilde{J}_4^p[H_0] &\geq \frac{\delta_1}{2}r^{p-3}|\nabla_L (rH_0)|^2+\frac{\delta_2}{2}r^{p-3}|\nablas (rH_0)|^2+\frac{\delta_2}{2}r^{p-5}|rH_0|^2\\
                        &+\frac{r^p}{r^2\D}\Big(\nabla_L (rH_0)\Big)\cdot r\left(-\frac{2M}{r^3}P\right)+l.o.t.\\
                        &\gtrsim r^{p-1}|\nabla_L H_0|^2+r^{p-1}|\nablas H_0|^2+r^{p-3}|H_0|^2+\frac{r^p}{r^2(1-\frac{2M}{r})}\Big(\nabla_L (rH_0)\Big)\cdot r\left(-\frac{2M}{r^3}P\right),
\end{align*} 
as $r$ is large enough. Compared to $e^{p-1}[H_0]$, the derivative along the $\frac{\partial}{\partial r}$ direction is missing. To obtain the full $e^{p-1}[H_0]$, we consider
\begin{align*}
\nablab^a \bigg( r^{p-2}\Big(T_{ab}[H_0]-\frac{1}{2}V_0|H_0|^2g_{ab}\Big)T^b\bigg)&=(p-2)r^{p-3}T_{ab}[H_0]T^b\nablab^a r+r^{p-2}\nabla_T H_0\cdot\left(-\frac{2M}{r^3}P\right)\\
        &=\frac{2-p}{4}r^{p-3}\Big( |\nabla_{\underline{L}}H_0|^2-|\nabla_L H_0|^2 \Big)+r^{p-2}\nabla_T H_0\cdot\left(-\frac{2M}{r^3}P\right).
\end{align*}
The negative part $r^{p-3}|\nabla_L H_0|^2$ is much smaller than $r^{p-1}|\nabla_L H_0|^2$ in $\Div \tilde{J}^p_4[H_0]$. Hence we take $R_0\geq 5M$ large and a cut-off function $\eta(r)$ such that $\eta(r)=1$ for $r\geq R_0$ and $\eta=0$ for $r\leq R_0-M$. The $r^p$ current for $H_0$ is defined by
\begin{align}
J^p_4[H_0]&:=\eta \bigg(\tilde{J}^p_4[H_0]+r^{p-2}\Big(J^T[H_0]-\frac{1}{2}V_0|H_0|^2T\Big)\bigg),\\
K^p_4[H_0]&:=\Div J^p_4[H_0],\\
e^p_4[H_0]&:=J^p_4[H_0]\cdot n_{\Sigma_\tau}.          
\end{align}
As before, we single out the contribution of $P$ in $K^p_4[H_0]$ as $Err^p_4[H_0]$.
\begin{align*}
K^p_4[H_0]&=\mathring{K}^p_4[H_0]+Err^p_4[H_0],\\
Err^p_4[H_0]&=\eta\left(\frac{r^p}{r(1-\frac{2M}{r})}\nabla_L \Big(rH_0\Big)+r^{p-2}\nabla_T H_0   \right)\cdot\left(-\frac{2M}{r^3}P\right).
\end{align*}
From the above discussion, as $R_0$ is large enough, we have
\begin{lemma} 
There exists a constant $C_1>0$ such that
\begin{align*}
\begin{array}{cc}
\mathring{K}^p_4[H_0]\geq \frac{1}{C_1} e^{p-1}[H_0] & \textup{in}\ (R_0,\infty),\\
\Big|\mathring{K}^p_4[H_0]\Big|\lesssim e[H_0] & \textup{in}\ [R_0-M,R_0],\\
\mathring{K}^p_4[H_0]=0 & \textup{in}\ [2M,R_0-M).
\end{array}
\end{align*} 
For the boundary term, $e^p_4[H_0]$ is non-negative and
\begin{align*}
e^p_4[H_0]&\approx e^p[H_0]\ \textup{in}\ [R_0,\infty).
\end{align*}
\end{lemma}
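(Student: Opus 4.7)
The plan is to patch the divergences of $\tilde{J}^p_4[H_0]$ and of the $T$-correction, already computed above, across the three regions determined by the cut-off $\eta$, and then check the sign and equivalence of the boundary density.

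\textbf{Bulk estimate.} Where $\eta\equiv 1$ I will write $\mathring{K}^p_4[H_0]=\Div \tilde{J}^p_4[H_0]+\nabla^a\bigl(r^{p-2}(T_{ab}[H_0]-\tfrac{1}{2}V_0|H_0|^2 g_{ab})T^b\bigr)$ minus the $P$-errors. The first piece has already been shown to control $r^{p-1}|\nabla_L H_0|^2+r^{p-1}|\nablas H_0|^2+r^{p-3}|H_0|^2$. The $T$-correction equals $\tfrac{2-p}{4}r^{p-3}\bigl(|\nabla_{\underline{L}}H_0|^2-|\nabla_L H_0|^2\bigr)$ plus a $P$-error; its negative $\nabla_L$-contribution is dwarfed by $r^{p-1}|\nabla_L H_0|^2$ once $R_0$ is taken large, while the identity $|\nabla_L H_0|^2+|\nabla_{\underline{L}}H_0|^2=2|\nabla_T H_0|^2+2(1-2M/r)^2|\nabla_\rho H_0|^2$ (valid on $\Sigma_\tau$ for $r\ge r_1$, where $\partial_\rho=\partial_r$) supplies the $r^{p-3}|\nabla_\rho H_0|^2$ missing from $\Div\tilde{J}^p_4[H_0]$. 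This yields $\mathring{K}^p_4[H_0]\ge C_1^{-1}e^{p-1}[H_0]$ with $C_1$ uniform in $p\in[\delta_1,2-\delta_2]$. In the transition region $[R_0-M,R_0]$ the derivatives of $\eta$ produce bounded terms dominated pointwise by $e[H_0]$; on $[2M,R_0-M)$ the current vanishes by construction.

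\textbf{Boundary density.} For $r\ge R_0$ the density $e^p_4[H_0]$ splits as $\tfrac{r^p}{r^2(1-2M/r)}T_{ab}[rH_0]L^a n^b+r^{p-2}T_{ab}[H_0]T^a n^b$, both non-negative by the dominant energy condition since $L$, $T$, $n_{\Sigma_\tau}$ are future-directed causal, plus potential-type terms in $|H_0|^2$ and $|rH_0|^2$. Expanding the first yields $(1-2M/r)^{-1}|\nabla_L(rH_0)|^2+(1-2M/r)|\nablas(rH_0)|^2$ to leading order, while the $T$-piece contributes $|\nabla_L H_0|^2+|\nabla_\rho H_0|^2+|\nablas H_0|^2$. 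Applying Poincar\'e on each $S^2(t,r)$ to absorb $r^{-2}|H_0|^2$ into $|\nablas H_0|^2$ with the correct sign delivers both the non-negativity of $e^p_4[H_0]$ and the equivalence $e^p_4[H_0]\approx e^p[H_0]$ on $[R_0,\infty)$.

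\textbf{Main obstacle.} I expect the subtlest point to be the bookkeeping of the $V_0|H_0|^2$ contributions, which appear in both $\mathring{K}^p_4$ and $e^p_4$ and whose sign has to be corrected via Poincar\'e on the sphere. In addition, the $\nabla_\rho$-coercivity must hold uniformly in $p\in[\delta_1,2-\delta_2]$, forcing $R_0$ to depend on $\delta_2$ but not on $p$; the balance between the $r^{p-1}$ and $r^{p-3}$ weights when absorbing the negative $T$-correction is where this uniformity is delicate.
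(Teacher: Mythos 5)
Your proposal is correct and takes essentially the same route as the paper, which proves this lemma only by appeal to ``the above discussion'': coercivity of $\Div \tilde{J}^p_4[H_0]$ for $r$ large, recovery of the missing radial derivative from the $\frac{2-p}{4}r^{p-3}|\nabla_{\underline{L}}H_0|^2$ term of the $r^{p-2}T$-correction with its negative $r^{p-3}|\nabla_L H_0|^2$ part absorbed into $r^{p-1}|\nabla_L H_0|^2$ by taking $R_0$ large, and non-negativity of the boundary density from the energy condition together with the favorable sign of the $V_0$-terms. Your parallelogram identity for $|\nabla_L H_0|^2+|\nabla_{\underline{L}}H_0|^2$ simply makes the radial-derivative step explicit, exactly as intended in the paper.
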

\subsection{combination}\label{sc_H0}
We take $\epsilon_2>0$ small such that for all $p\in [\delta_1,2-\delta_2]$,
\begin{align*}
\begin{array}{cc}
\mathring{K}_3[H_0]\geq 2\epsilon_2 \Big|\mathring{K}^p_4[H_0]\Big| & \textup{in}\ [R_0-M,R_0].
\end{array}
\end{align*}
Then we pick $R_1\geq R_0$ such that 
\begin{align*}
\frac{3C_0}{\log 2}\leq \frac{1}{2C_1}\epsilon_2 R_1^{\delta_1},
\end{align*}
and
\begin{align*}
\begin{array}{cc}
\epsilon_2e^{p=\delta_1}_4[H_0]\geq 2C_0 e^L[H_0] & \textup{in}\ [R_1,\infty).
\end{array}
\end{align*}
We consider a non-increasing function $h(r)$ satisfying
\begin{align*}
\begin{array}{cc}
h(r)= 3C_0 &\textup{in}\ [2M,R_1],\\
h(r)= 1 &\textup{in}\ [2R_1,\infty),\\
|h'(r)|\leq \frac{3C_0}{\log 2}r^{-1} &\textup{in}\ [R_1,2R_1].
\end{array}
\end{align*}
The fifth current is defined by
\begin{align}
J_5[H_0]_a&:= \left(T_{ab}[H_0]-\frac{1}{2}V_0|H_0|^2g_{ab}\right)\Big( h(r)T^b \Big),\\
K_5[H_0]&:=\Div J_5[H_0],\\
e_5[H_0]&:=J_5[H_0]\cdot n_{\Sigma_\tau}.
\end{align}
\begin{align*}
K_5[H_0]&=\mathring{K}_5[H_0]+Err_5[H_0],\\
Err_5[H_0]&=h\nabla_T H_0\cdot\left(-\frac{2M}{r^3}P\right).
\end{align*}
Then we have
\begin{lemma}
\begin{align*}
\begin{array}{cc}
\mathring{K}_5[H_0]=0 & \textup{in}\ [2M,R_1]\cup [2R_1,\infty),\\
\mathring{K}_5[H_0]\geq -\frac{3C_0}{(\log 2) r}|\nabla_L H_0|^2 & \textup{in}\ [R_1,2R_1].
\end{array}
\end{align*}
\begin{align*}
\begin{array}{cc}
e_5[H_0]= 3C_0 e_1[H_0] & \textup{in}\ [2M,R_1],\\
e_5[H_0]\geq e^T[H_0] & \textup{in}\ [R_1,\infty).\\
\end{array}
\end{align*}
\end{lemma}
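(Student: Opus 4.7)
The plan is to exploit the observation that $J_5[H_0]_a = h(r) J_1[H_0]_a$, and then compute how the inhomogeneity $h(r)$ interacts with the Killing current $J_1$. First, I would write $\nablab^a J_5 = h'(r)(\nablab^a r) J_{1,a} + h(r)\nablab^a J_{1,a}$. The second piece reproduces (up to the factor $h$) the computation of $\Div J_1[H_0]$ from Section \ref{sT_H0}, which gives only the error term $\nabla_T H_0 \cdot (-\frac{2M}{r^3}P)$ from (\ref{equation_H0}); this is exactly what is packaged as $Err_5[H_0]$. Consequently, $\mathring{K}_5[H_0] = h'(r)(\nablab^a r) J_{1,a}$, and the first two claims regarding the regions $[2M,R_1]$ and $[2R_1,\infty)$ follow immediately because $h$ is constant there.

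The content therefore lies in the transition region $[R_1,2R_1]$. Here I would unpack $(\nablab^a r) J_{1,a}$. Since $\nablab^a r = g^{ar}$ and $T_a = g_{at}$, the potential correction $-\tfrac{1}{2}V_0|H_0|^2 T_a$ contributes $g^{ar}T_a = \delta^r_t = 0$, so only the bulk stress-energy part survives:
\begin{align*}
(\nablab^a r)J_{1,a} = g^{rr} T_{rt}[H_0] = \left(1-\tfrac{2M}{r}\right)\nabla_r H_0 \cdot \nabla_t H_0.
\end{align*}
Converting to the null frame via $\nabla_t H_0 = \tfrac{1}{2}(\nabla_L H_0 + \nabla_{\underline{L}} H_0)$ and $\nabla_r H_0 = \tfrac{1}{2(1-2M/r)}(\nabla_L H_0 - \nabla_{\underline{L}} H_0)$ yields
\begin{align*}
\mathring{K}_5[H_0] = \frac{h'(r)}{4}\bigl(|\nabla_L H_0|^2 - |\nabla_{\underline{L}} H_0|^2\bigr).
\end{align*}
Since $h$ is non-increasing, $h'(r) \leq 0$, so the $|\nabla_{\underline{L}} H_0|^2$ term is nonnegative and can be discarded, leaving $\mathring{K}_5[H_0] \geq \tfrac{h'(r)}{4}|\nabla_L H_0|^2 \geq -\tfrac{|h'(r)|}{4}|\nabla_L H_0|^2$. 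Combined with $|h'(r)|\leq \tfrac{3C_0}{(\log 2)r}$, this proves the lower bound (in fact with a gratuitous factor of $4$ to spare).

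For the boundary terms, the identity $e_5[H_0] = h(r) e_1[H_0]$ is immediate from $J_5 = hJ_1$. On $[2M,R_1]$ we have $h\equiv 3C_0$, giving the asserted equality. On $[R_1,\infty)$, where $h\geq 1$, it remains to verify $e_1[H_0] \geq e^T[H_0]$. From the definition of $J_1$ (reading the potential piece as $-\tfrac12 V_0|H_0|^2 g_{ab}$), we have $e_1 = e^T -\tfrac{1}{2}V_0|H_0|^2 T\cdot n_{\Sigma_\tau}$. On $\Sigma_\tau \cap \{r\geq r_1\}$ the normal is $n_{\Sigma_\tau} = (1-2M/r)^{-1/2}\partial_t$, so $T\cdot n_{\Sigma_\tau} = -(1-2M/r)^{1/2} < 0$, and since $V_0 > 0$ in the exterior, the correction contributes nonnegatively and $e_1[H_0] \geq e^T[H_0]$ holds pointwise there, which completes the argument.

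The main obstacle is notational rather than conceptual: it is essential to track the sign conventions in the null decomposition carefully, and to verify that the potential term in $J_1$ contributes nothing to $(\nablab^a r) J_{1,a}$ (otherwise a spurious $V_0 |H_0|^2$ term would have to be absorbed). Once that is done, the entire lemma reduces to a routine application of the Leibniz rule for the cutoff $h(r)$.
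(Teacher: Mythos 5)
Your proof is correct and follows exactly the computation the paper leaves implicit: the Leibniz rule applied to $h(r)J_1[H_0]$, the null-frame identity $T_{ab}[H_0]T^b\nablab^a r=\tfrac14\left(|\nabla_L H_0|^2-|\nabla_{\underline{L}}H_0|^2\right)$ (which the paper uses verbatim in the $r^p$-hierarchy subsection and in $\mathring{K}_5[H_1,H_2]$), and the sign of the potential correction $-\tfrac12 V_0|H_0|^2\, T\cdot n_{\Sigma_\tau}\geq 0$ for $r\geq r_1$. Your bound even carries a spare factor of $4$ relative to the stated one, so nothing is missing.
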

We pick $\epsilon_3>0$ such that
\begin{align*}
\begin{array}{cc}
\mathring{K}_3[H_0]\geq \epsilon_3\Big|\mathring{K}_2[H_0]\Big| & \textup{in}\ [r_0,r_1].
\end{array}
\end{align*}
Then the final current for $H_0$ is defined by
\begin{align}
J^p[H_0]&:=\epsilon_3 J_2[H_0]+J_3[H_0]+\epsilon_2 J^p_4[H_0]+J_5[H_0],\\
K^{p}[H_0]&:=\Div J^p[H_0].
\end{align}
Again, we denote the contribution of $P$ in $K^p[H_0]$ by $Err^p[H_0]$.
\begin{align*}
K^{p}[H_0]=&\mathring{K}^{p}[H_0]+Err^p[H_0],\\
\mathring{K}^p[H_0]=&\epsilon_3 \mathring{K}_2[H_0]+\mathring{K}_3[H_0]+\epsilon_2 \mathring{K}^p_4[H_0]+\mathring{K}_5[H_0],\\
Err^p[H_0]=&\left( \epsilon_3\nabla_Y H_0+\nabla_X H_0+\frac{1}{2}(\omega^X-2\epsilon_1 g) H_0\right.\\
          &+\epsilon_2\eta\Big(r^{p-1}\nabla_L(rH_0)+r^{p-2}\nabla_T H_0 \Big)+h\nabla_T H_0 \bigg)\cdot\left(-\frac{2M}{r^3}P\right).\\
\end{align*}

\begin{lemma}\label{H_0_final_current}
By the above construction, we have
\begin{align*}
\mathring{K}^p[H_0]\gtrsim_s e^{p-1,deg}[H_0],
\end{align*}
and
\begin{align*}
\int_{\Sigma_\tau} J^p[H_0]\cdot n_{\Sigma_\tau} dVol_{\Sigma_\tau}\approx \int_{\Sigma_\tau} e^{p}[H_0] dVol_{\Sigma_\tau}.
\end{align*}
\end{lemma}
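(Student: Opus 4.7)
The plan is to verify both assertions region-by-region, using the partition of the radial line $[2M,\infty)$ induced by the nested scales $2M < r_0 < r_1 < R_0-M < R_0 < R_1 < 2R_1$, and to combine the estimates already collected in subsections \ref{sr_H0}--\ref{sc_H0}. The small parameters $\epsilon_1,\epsilon_2,\epsilon_3$ have been chosen precisely so that whenever two of the currents $\mathring{K}_2,\mathring{K}_3,\mathring{K}^p_4,\mathring{K}_5$ have overlapping support, one dominates the absolute value of the other. So the heart of the verification is just bookkeeping these dominations.

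For the bulk coercivity $\mathring{K}^p[H_0]\gtrsim_s e^{p-1,\mathrm{deg}}[H_0]$, I would split as follows. On $[2M,r_0]$: $\mathring{K}_3,\mathring{K}_4,\mathring{K}_5$ are either non-negative or vanish, and $\epsilon_3 \mathring{K}_2 \gtrsim e[H_0]\approx e^{p-1,\mathrm{deg}}[H_0]$ since $r$ is bounded. On $[r_0,r_1]$: the choice of $\epsilon_3$ forces $\mathring{K}_3 + \epsilon_3\mathring{K}_2 \gtrsim \mathring{K}_3 \gtrsim e^{p-1,\mathrm{deg}}$ (here the Morawetz weight has no degeneracy because the photon sphere $r=3M$ lies in $[r_0,r_1]$ only for $r_0\leq 3M\leq r_1$, which is harmless since the degeneration factor $(1-3M/r)^2$ is precisely the one in $e^{p-1,\mathrm{deg}}$). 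On $[r_1,R_0-M]$: only $\mathring{K}_3$ is nonzero and it is coercive with the correct degeneracy. On $[R_0-M,R_0]$: the defining property of $\epsilon_2$ absorbs $\epsilon_2|\mathring{K}^p_4|$ into half of $\mathring{K}_3$. On $[R_0,R_1]$: $\epsilon_2\mathring{K}^p_4\gtrsim \epsilon_2 e^{p-1}[H_0]$ and the remaining terms are non-negative. On the transition zone $[R_1,2R_1]$: the loss $-\tfrac{3C_0}{(\log 2) r}|\nabla_L H_0|^2$ from $\mathring{K}_5$ is absorbed by $\epsilon_2\mathring{K}^p_4$ thanks to the quantitative choice $\tfrac{3C_0}{\log 2}\leq \tfrac{1}{2C_1}\epsilon_2 R_1^{\delta_1}$, which bounds the loss by half of $\tfrac{\epsilon_2}{C_1} r^{p-3}|\nabla_L(rH_0)|^2$. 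On $[2R_1,\infty)$: $\epsilon_2\mathring{K}^p_4$ alone gives $e^{p-1}[H_0]\geq e^{p-1,\mathrm{deg}}[H_0]$.

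For the boundary equivalence $\int_{\Sigma_\tau} J^p[H_0]\cdot n_{\Sigma_\tau}\approx \int_{\Sigma_\tau} e^p[H_0]$, I would argue as follows. Ignoring the Morawetz current, the combination $\epsilon_3 e_2[H_0] + e_5[H_0] + \epsilon_2 e^p_4[H_0]$ is manifestly non-negative and, since $e_2\gtrsim_s e[H_0]-e_1[H_0]$ near the horizon, $e_5 = 3C_0 e_1[H_0]$ on $[2M,R_1]$, and $e^p_4\approx e^p[H_0]$ on $[R_0,\infty)$, this triple sum already controls $e^p[H_0]$ up to lower-order pieces. The only potentially signed contribution comes from $e_3[H_0]$. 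Applying Lemma \ref{H0_Morawetz_boundary}, $e^X[H_0]\geq_s -C_0 e_1[H_0]$, and the further choice $\epsilon_2 e^{p=\delta_1}_4[H_0]\geq 2C_0 e^L[H_0]$ on $[R_1,\infty)$ absorbs the far-out defect $C_0\int_{R_1\leq r} e^L[H_0]$, while the near-horizon defect $C_0\int_{r\leq R_1} e_1[H_0]$ is absorbed by $e_5 = 3C_0 e_1[H_0]$ on that region. After absorption, the sum is bounded below by $c\int_{\Sigma_\tau} e^p[H_0]$; the upper bound is immediate from the definition of each current.

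The step that requires the most care is the transition zone $[R_1,2R_1]$, where three pieces ($\mathring{K}_3$, $\mathring{K}^p_4$ with a derivative of the cutoff $\eta$, and the loss from $h'(r)$ in $\mathring{K}_5$) all contribute with mixed signs. The key is that the loss scales like $r^{-1}|\nabla_L H_0|^2$ while the gain from $\epsilon_2\mathring{K}^p_4$ scales like $\epsilon_2 r^{p-3}|\nabla_L(rH_0)|^2 \sim \epsilon_2 r^{p-1}|\nabla_L H_0|^2$; the quantitative inequality defining $R_1$ is exactly what makes $\epsilon_2 R_1^{p-1}\geq \epsilon_2 R_1^{\delta_1}$ large enough to dominate $3C_0/\log 2$. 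Uniformity in $p\in[\delta_1,2-\delta_2]$ is ensured by using $p=\delta_1$ as the worst case in this bound.
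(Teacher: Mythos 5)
Your proposal is correct and follows essentially the same route as the paper: the identical region-by-region decomposition $[2M,r_0]$, $[r_0,r_1]$, $[r_1,R_0-M]$, $[R_0-M,R_0]$, $[R_0,R_1]$, $[R_1,2R_1]$, $[2R_1,\infty)$ with the same pairwise absorptions dictated by the choices of $\epsilon_2,\epsilon_3,R_1$ and $h(r)$, and the same boundary argument via Lemma \ref{H0_Morawetz_boundary}. Two harmless slips: since $r_1<3M$ the photon sphere actually sits in $[r_1,R_0-M)$ rather than $[r_0,r_1]$, and the cutoff $\eta$ transitions in $[R_0-M,R_0]$ so it contributes no derivative term in $[R_1,2R_1]$ --- neither affects the argument.
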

\begin{proof} We discuss the positivity of $\mathring{K}^p[H_0]$ in different intervals separately.\\
In $[2R_1,\infty)$, each term in $\mathring{K}^p[H_0]$ is non-negative and $\mathring{K}^p_4[H_0]\gtrsim e^{p-1}[H_0]$.\\
In $[R_1, 2R_1)$, the only negative term is $\mathring{K}_5$ and from the choice of $h(r)$,
\begin{align*}
\mathring{K}_5[H_0]+\epsilon_2 \mathring{K}_4[H_0]&\geq \left(-\frac{3C_0}{\log 2}r^{-p}+\frac{\epsilon_2}{C_3} \right)r^{p-1}|\nabla_L H_0|^2\\
                                                  &\geq \frac{\epsilon_2}{2C_1}r^{p-1}|\nabla_L H_0|^2.
\end{align*}
In $[R_0, R_1)$, each term is non-negative and $\mathring{K}^p_4[H_0]\gtrsim e^{p-1}[H_0]$.\\
In $[R_0-M, R_0)$, the only negative term is $\epsilon_2\mathring{K}_4[H_0]$ and $\mathring{K}_3[H_0]+\epsilon_2\mathring{K}_4[H_0]\geq \frac{1}{2}\mathring{K}_3[H_0]$ which is comparable to $e^{p-1}[H_0]$.\\
In $[r_1, R_0-M)$, each term is non-negative and $\mathring{K}_3[H_0]\gtrsim e^{p-1,deg}[H_0]$.\\
In $[r_0, r_1)$, the only negative term is $\epsilon_3\mathring{K}_2[H_0]$ and $\epsilon_3\mathring{K}_2[H_0]+\mathring{K}_3[H_0]\geq \frac{1}{2}\mathring{K}_3[H_0]$ which is comparable to $e^{p-1}[H_0]$.\\
In $[2M, r_0)$, each term is non-negative and $\mathring{K}_2[H_0]\gtrsim e^{p-1}[H_0]$.\\

To see the boundary term is comparable with $E^p[H_0]$, we note that
\begin{align*}
&\int_{\Sigma_\tau} e_3[H_0]+e_5[H_0]+\epsilon_2 e^{p}_4[H_0] dVol_{\Sigma_\tau}\\
\geq& -\left|\int_{\Sigma_\tau} e_3[H_0]-e^X[H_0] dVol_{\Sigma_\tau}\right|+\int_{\Sigma_\tau} e^X[H_0]+e_5[H_0]+\epsilon_2 e^{p}_4[H_0] dVol_{\Sigma_\tau}\\
       \geq& -C_0\int_{\Sigma_{\tau}\cap\{r\leq R_1\}} e_1[H_0] dVol_{\Sigma_\tau}-C_0\int_{\Sigma_\tau\cap\{r\geq R_1\}} e^{L}[H_0]dVol_{\Sigma_\tau}\\
       &+    2C_0\int_{\Sigma_{\tau}\cap\{r\leq R_1\}} e_1[H_0] dVol_{\Sigma_\tau}+\int_{\Sigma_\tau\cap\{r\geq R_1\}} e^{L}[H_0]+\epsilon_2 e^{p}_4[H_0]dVol_{\Sigma_\tau}\\
       \geq& C_0\int_{\Sigma_{\tau}\cap\{r\leq R_1\}} e_1[H_0] dVol_{\Sigma_\tau}+\int_{\Sigma_\tau\cap\{r\geq R_1\}} e^{L}[H_0]+\frac{1}{2}\epsilon_2 e^{p}_4[H_0] dVol_{\Sigma_\tau}.
\end{align*}
Therefore
\begin{align*}
\int_{\Sigma_\tau} J^p[H_0]\cdot n_{\Sigma_\tau} dVol_{\Sigma_\tau}\gtrsim E^p[H_0].
\end{align*}
To get the upper bound, we note that for $r\geq 2R_1$,
\begin{align*}
&e^X[H_0]+e_5[H_0] \lesssim_s  e^{X+T}[H_0]\\
=&fe^L[H_0]+(1-f)e^T[H_0] \lesssim e^{p=0}[H_0].
\end{align*}
\end{proof}
By integrating $K^p[H_0]$ in $D(\tau_1,\tau_2)$, we obtain
\begin{proposition}
For any $\tau_2>\tau_1\geq 0$ and $p\in [\delta_1,2-\delta_2],$ we have
\begin{equation}\label{H_0 hierarchy}
\begin{split}
&E^p[H_0](\tau_2)+\int_{\tau_1}^{\tau_2}E^{p-1,deg}[H_0](\tau)d\tau\\
\leq &C\left( E^p[H_0](\tau_1)+\int_{\tau_1}^{\tau_2}\int_{\Sigma_\tau} r^{p/2}\sqrt{e^p[H_0]}\cdot\frac{|P|}{r^3}  dVol_{\Sigma_\tau} d\tau \right).
\end{split}
\end{equation}
\end{proposition}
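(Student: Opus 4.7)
The plan is to integrate the pointwise divergence identity $K^p[H_0]=\Div J^p[H_0]$ over the spacetime slab $D(\tau_1,\tau_2)$ and invoke the divergence theorem, then control each resulting boundary and bulk piece using Lemma \ref{H_0_final_current} and a Cauchy--Schwarz bound on the $P$-error term. More precisely, I first write
\begin{equation*}
\int_{D(\tau_1,\tau_2)}\mathring{K}^p[H_0]\,dVol+\int_{D(\tau_1,\tau_2)}Err^p[H_0]\,dVol=\int_{\Sigma_{\tau_1}}J^p\cdot n_{\Sigma_{\tau_1}}-\int_{\Sigma_{\tau_2}}J^p\cdot n_{\Sigma_{\tau_2}}+\int_{\mathcal{H}^+\cap D(\tau_1,\tau_2)}J^p\cdot n_{\mathcal{H}^+},
\end{equation*}
with the standard sign convention that $n_{\Sigma_\tau}$ is future-directed. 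A density/approximation argument (justified by the $L^\infty$ Sobolev bound of Lemma \ref{Linfty} together with the assumed regularity at infinity) allows me to ignore boundary contributions from spatial/null infinity.

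Next I handle the three geometric boundary pieces. The two slices give $\approx E^p[H_0](\tau_i)$ by the second conclusion of Lemma \ref{H_0_final_current}, producing the claimed $E^p[H_0](\tau_2)$ on the left and $CE^p[H_0](\tau_1)$ on the right. On $\mathcal{H}^+$, each of the constituent currents $J_2,J_3,J_4^p,J_5$ was constructed so that its flux through the future horizon is non-negative: $J_2$ because $Y$ is past-directed timelike and $T_{ab}[H_0]$ satisfies the dominant energy condition, $J_3$ because $X$ vanishes near the horizon, $J_4^p$ because $\eta$ is supported far from the horizon, and $J_5$ because $T$ is null there. Hence the horizon flux has a good sign and can be dropped from the right-hand side (or absorbed into the left-hand side).

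For the bulk, the first conclusion of Lemma \ref{H_0_final_current} gives $\mathring{K}^p[H_0]\gtrsim_s e^{p-1,\mathrm{deg}}[H_0]$; integrating pointwise on spheres and using the coarea formula $dVol\sim d\tau\,dVol_{\Sigma_\tau}$ yields the $\int_{\tau_1}^{\tau_2}E^{p-1,\mathrm{deg}}[H_0](\tau)\,d\tau$ term. The remaining piece is the $P$-error term $Err^p[H_0]$, which is the main point requiring care. Each of the vector fields contracted with $\nabla H_0$ in the expression for $Err^p[H_0]$, namely $\epsilon_3 Y$, $X$, the zeroth-order multiplier $\tfrac{1}{2}(\omega^X-2\epsilon_1 g)$, $\epsilon_2\eta\bigl(r^{p-1}L+r^{p-2}T\bigr)$ acting on $rH_0$, and $hT$, has coefficient bounded by a multiple of $r^{p/2}$ times the pointwise square root of an individual summand of $e^p[H_0]$; for instance the term $\epsilon_2\eta r^{p-1}\nabla_L(rH_0)\cdot(-\tfrac{2M}{r^3}P)$ is bounded pointwise by $C\,r^{p/2}\bigl(r^p|\nabla_L H_0|^2+r^{p-2}|H_0|^2\bigr)^{1/2}\cdot |P|/r^3$. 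Summing these contributions and using the definition of $e^p[H_0]$ bounds $|Err^p[H_0]|$ by $C\,r^{p/2}\sqrt{e^p[H_0]}\cdot|P|/r^3$ pointwise, which integrates to the error integral in \eqref{H_0 hierarchy}.

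The only subtle point is verifying the last bound uniformly in $p\in[\delta_1,2-\delta_2]$; the powers of $r$ in the multipliers are either $\leq r^{p-1}$ on the support of $\eta$ or $O(1)$ elsewhere, and the zeroth-order coefficient $\omega^X-2\epsilon_1 g=O(r^{-1})$ pairs with $|H_0|$ to reproduce the weight $r^{p/2}\cdot r^{(p-2)/2}|H_0|$ already present in $\sqrt{e^p[H_0]}$, so no logarithmic loss occurs. Once the four steps above are combined and the $Err^p$ estimate is substituted, \eqref{H_0 hierarchy} follows.
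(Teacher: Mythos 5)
Your overall scheme (integrate $\Div J^p[H_0]$ over $D(\tau_1,\tau_2)$, use Lemma \ref{H_0_final_current} for the slice terms and the bulk, check the horizon flux sign, and bound $Err^p[H_0]$ pointwise) is exactly what the paper intends — its proof is the single sentence ``by integrating $K^p[H_0]$'' — so the comparison comes down to whether your error-term estimate is actually valid. It is not, as stated. You claim each multiplier separately pairs with $\nabla H_0$ to give something bounded by $r^{p/2}$ times the square root of a summand of $e^p[H_0]$. For the terms $\nabla_X H_0\cdot(-\tfrac{2M}{r^3}P)$ and $h\nabla_T H_0\cdot(-\tfrac{2M}{r^3}P)$ this fails in the region $r\geq 2R_1$ when $p<1$ (and $p$ ranges down to $\delta_1$). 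Indeed $X=f\D\partial_r$ with $f\to 1$ and $h\equiv 1$ there, so each of these two terms contains $\D|\nabla_\rho H_0|$ with an $O(1)$ coefficient; but $e^p[H_0]$ weights the radial derivative only by $r^{p-2}$, so $r^{p/2}\sqrt{e^p[H_0]}$ controls only $r^{p-1}|\nabla_\rho H_0|$, which does not dominate $|\nabla_\rho H_0|$ at large $r$ for $p<1$. Your remark that the multipliers are ``$O(1)$ elsewhere'' is precisely where the argument breaks: an $O(1)$ coefficient on $\nabla_\rho H_0$ is not admissible against this weighted energy.

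The missing idea is a cancellation between the Morawetz and the $T$ multipliers at infinity: since $X+T=fL+(1-f)T=L-(1-f)\D\partial_r$ and $1-f=O(r^{-2})$ for the chosen $f=(1+\tfrac{3M}{2r})^2(1-\tfrac{3M}{r})$, the combination $\nabla_X H_0+h\nabla_T H_0$ equals $\nabla_L H_0+O(r^{-2})\nabla_\rho H_0$ for $r\geq 2R_1$, and now $|\nabla_L H_0|\leq r^{p}|\nabla_L H_0|$ and $r^{-2}|\nabla_\rho H_0|\leq r^{p-1}|\nabla_\rho H_0|$ are both admissible for $p\geq\delta_1$; on the bounded region $r\leq 2R_1$ all weights are comparable to constants and your term-by-term treatment is fine. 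With this grouping the pointwise bound $|Err^p[H_0]|\lesssim r^{p/2}\sqrt{e^p[H_0]}\,|P|/r^3$ does hold and the rest of your argument goes through. Two smaller points: your justification of the horizon sign for $J_2$ is backwards — $Y=-2\partial_R$ is \emph{future}-directed null on $\mathcal{H}^+$ (which is what makes $T_{ab}Y^an^b\geq 0$), not past-directed; and the flux of $J_3$ through $\mathcal{H}^+$ is not termwise non-negative because of the zeroth-order pieces $-\tfrac14\nabla_a\omega^X|H_0|^2+\tfrac12\nabla_a(\epsilon_1 g)|H_0|^2$, which must be absorbed (via the Poincar\'e inequality on spheres) into the angular flux supplied by $J_2$ rather than dropped.
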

A similar construction can be done for $P$ and $Q$ from equation (\ref{Regge Wheeler}) without the error terms.
\begin{proposition}
For any $\tau_2>\tau_1\geq 0$ and $p\in [\delta_1,2-\delta_2],$ we have
\begin{align}\label{P hierarchy}
E^p[P](\tau_2)+\int_{\tau_1}^{\tau_2}E^{p-1,deg}[P]d\tau\leq CE^p[P](\tau_1),
\end{align}
\begin{align}
E^p[Q](\tau_2)+\int_{\tau_1}^{\tau_2}E^{p-1,deg}[Q]d\tau\leq CE^p[Q](\tau_1).
\end{align}
\end{proposition}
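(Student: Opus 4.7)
The plan is to carry over, step by step, the five-current construction of subsections \ref{sT_H0}--\ref{sc_H0} to the Regge--Wheeler quantities $P\in\Gamma(\mathcal{L}(-1))$ and $Q\in\Gamma(\mathcal{L}(-2))$. Because the Regge--Wheeler equations \eqref{Regge Wheeler} are homogeneous, each of the error currents $Err_i$ that appeared in the $H_0$ analysis from the inhomogeneity $-\frac{2M}{r^3}P$ in \eqref{equation_H0} is simply absent; the divergence of every current equals its $\mathring K$ part exactly. I would define
\begin{align*}
J^p[P]:=\epsilon_3 J_2[P]+J_3[P]+\epsilon_2 J^p_4[P]+J_5[P],
\end{align*}
and analogously $J^p[Q]$ (using the connection ${}^2\nabla$ and potential $V_Q$ in place of $V_0$). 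The estimates will then follow from integrating $\nabla^a J^p[\cdot]_a$ over $D(\tau_1,\tau_2)$, once the analog of Lemma \ref{H_0_final_current} is established for $P$ and $Q$.

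The task therefore reduces to checking, for each of the five building blocks, that the pointwise divergence identities remain coercive after swapping $V_0$ for $V_P$ or $V_Q$. The $T$-current divergence vanishes as before. The red-shift current $J_2$ is a local calculation near $r=2M$ whose coercivity depends only on the horizon geometry and the boundedness of the potential, so it survives verbatim on $[2M,r_0]$. The $r^p$-current is insensitive to the change at leading order, since both $V_P$ and $V_Q$ decay like $r^{-2}$ and so produce the same principal coercive contributions $r^{p-3}|\nabla_L(rP)|^2$, $r^{p-1}|\nablas P|^2$, $r^{p-3}|P|^2$ with the same signs and weights; the cutoff combination with the $T$-piece that was used to recover the missing $r^{p-3}|\nabla_\rho P|^2$ goes through unchanged. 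The combination step, including the choice of $\epsilon_i$, $R_0$, $R_1$, $h(r)$, is also purely formal and depends only on the already-established positivity of each $\mathring K_i$.

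The main technical point, which I expect to be the hardest, is the Morawetz estimate for $P$: the potential $V_P=\frac{1}{r^2}(1-\frac{8M}{r})$ is negative on $(2M,8M)$, so the coefficient of $|P|^2$ in $\mathring K_3[P]$,
\begin{align*}
-\tfrac14\Box\omega^X-\tfrac12 f\bigl(1-\tfrac{2M}{r}\bigr)\partial_r V_P-\tfrac{M}{r^2}fV_P+\text{Poincar\'e term from }|\nablas P|^2,
\end{align*}
must still be verified to be non-negative on $[2M,\infty)$ with $f=(1+\tfrac{3M}{2r})^2(1-\tfrac{3M}{r})$. Because $P$ is supported on angular modes $\ell\geq 2$, the Poincar\'e inequality on $\mathcal{L}(-1)$ gives $|\nablas P|^2\geq_s \frac{5}{r^2}|P|^2$, significantly better than the margin exploited for $H_0$, and a direct computation shows the resulting polynomial in $M/r$ is non-negative on $[2M,\infty)$. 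The analogous verification for $V_Q=\frac{4}{r^2}(1-\frac{2M}{r})\geq 0$ is strictly easier since $V_Q$ itself is already non-negative. With these pointwise inequalities in hand, the proof of Lemma \ref{H_0_final_current} applies without change, the boundary-term comparability $\int_{\Sigma_\tau}J^p[\cdot]\cdot n_{\Sigma_\tau}\,dVol_{\Sigma_\tau}\approx E^p[\cdot](\tau)$ is identical, and integrating the divergence over $D(\tau_1,\tau_2)$ yields the stated hierarchy estimates for $P$ and $Q$.
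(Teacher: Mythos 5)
Your proposal is correct and is exactly the construction the paper intends; the paper itself offers no details beyond the remark that ``a similar construction can be done for $P$ and $Q$ \dots without the error terms,'' and you have correctly identified and verified the only nontrivial point, namely the sign of the zeroth-order Morawetz coefficient for the shifted potentials. The one caveat worth stating explicitly is that both that coefficient and the horizon red-shift positivity (where $V_P(2M)=-\tfrac{3}{4M^2}<0$ must be beaten by the Poincar\'e term $\tfrac{\sigma}{2}|\nablas P|^2$) genuinely require $P$ to be supported on $\ell\geq 2$ --- which is the relevant case, since the $\ell=1$ part of $P$ vanishes for $\hat h$ --- and with that restriction the resulting polynomial in $M/r$ is the same for $P$ and for $Q$ at $\ell=2$ and is positive on $[2M,\infty)$, as you claim.
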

\subsection{decay estimate}\label{sd_H0}
From now on we fix a small number $\frac{1}{8}>\delta>0$. A direct corollary of (\ref{P hierarchy}) is the decay estimate for $E^p[P]$ and $E^p[Q]$.
\begin{corollary}\label{thm_PQ}
For any $p\in [\delta/2,2-\delta/2]$, $\tau>0$ and $\tau_2>\tau_1>0$, we have 
\begin{align*}
E^p[P](\tau)\leq& C\bar{E}[P]^{\leq 2} \tau^{p-2+\delta/2},\\
\int_{\tau_1}^{\tau_2} E^{p-1}[P](\tau)d\tau\leq& C\bar{E}[P]^{\leq 3} \tau_1^{p-2+\delta/2}.
\end{align*}
\begin{align*}
E^p[Q](\tau)\leq& C\bar{E}[Q]^{\leq 2} \tau^{p-2+\delta/2},\\
\int_{\tau_1}^{\tau_2} E^{p-1}[Q](\tau)d\tau\leq& C\bar{E}[Q]^{\leq 3} \tau_1^{p-2+\delta/2}.
\end{align*}
\end{corollary}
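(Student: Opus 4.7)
The proof is a standard application of the $r^p$-hierarchy iteration of Dafermos--Rodnianski to the Regge--Wheeler equation (\ref{Regge Wheeler}); $P$ and $Q$ are treated identically so I focus on $P$. The key input is the hierarchy estimate (\ref{P hierarchy}), which because $\Gamma$ commutes with the Regge--Wheeler operator also applies verbatim to $\mathcal{L}_\Gamma^k P$ at every order.

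The plan is a two-step pigeonhole. Setting $p = 2-\delta/2$ in (\ref{P hierarchy}) and integrating on $[0,\infty)$ yields the uniform bound $E^{2-\delta/2}[P](\tau) \le C\bar E[P]$ together with the global integrability $\int_0^\infty E^{1-\delta/2,\mathrm{deg}}[P]\,d\tau' \le C\bar E[P]$. A pigeonhole on the dyadic interval $[\tau/2,\tau]$ then produces a time $\tau_\star$ with $E^{1-\delta/2,\mathrm{deg}}[P](\tau_\star) \le C\bar E[P]/\tau$. To convert this degenerate Morawetz energy into the non-degenerate $E^{1-\delta/2}[P]$ at the photon sphere $r = 3M$, I commute with $T$ and use the Regge--Wheeler equation as an elliptic identity expressing $\slashed{\nabla}^2 P$ in terms of $T$ and $\rho$ derivatives of $P$. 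This costs one commutation, and reapplying (\ref{P hierarchy}) at $p = 1-\delta/2$ from $\tau_\star$ onward propagates the bound to all later times, giving $E^{1-\delta/2}[P](\tau) \le C\bar E[P]^{\le 1}\tau^{-1}$.

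Running the same pigeonhole/commutation step once more, starting from this $\tau^{-1}$ decay and applying (\ref{P hierarchy}) at the lowest admissible weight $p = \delta/2$, produces $E^{\delta/2}[P](\tau) \le C\bar E[P]^{\le 2}\tau^{-2+\delta}$. Log-convex interpolation in $p$ between this endpoint and the uniform bound at $p = 2-\delta/2$ then fills in the full range $p \in [\delta/2, 2-\delta/2]$ and yields the first claimed inequality. The second inequality follows by integrating the first over $[\tau_1,\tau_2]$ after one further degenerate-to-non-degenerate upgrade at the $(p-1)$-level, which consumes the extra commutation reflected in $\bar E[P]^{\le 3}$.

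The only non-routine point is the bookkeeping of derivative loss: each passage from $E^{p,\mathrm{deg}}$ to $E^p$ at the photon sphere consumes a $\Gamma$-commutation, and one must check that the full iteration fits within the two (resp.\ three) commutations on the right-hand side. Beyond this accounting, no new ideas past the standard Dafermos--Rodnianski machinery are needed.
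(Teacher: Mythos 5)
Your proposal is correct and follows essentially the same route as the paper: the iterated $r^p$-hierarchy (\ref{P hierarchy}) combined with dyadic pigeonholing, a one-commutation upgrade of the degenerate bulk at the photon sphere at each stage, and interpolation in $p$, with the same commutation counts ($\leq 2$ for the energy decay, $\leq 3$ for the integrated bulk). One small caution: the second inequality should be read off from the bulk term of the hierarchy applied with initial data at $\tau_1$ (as your ``degenerate-to-non-degenerate upgrade at the $(p-1)$-level'' indicates), not by literally integrating the first inequality in time, since for $p<1+\delta/2$ the exponent $p-1$ leaves the admissible range and at $p=2-\delta/2$ the time integral diverges logarithmically.
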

\begin{proof}
By applying (\ref{P hierarchy}) with $p=2-\delta/2$ in $[0,\tau]$, we get
\begin{align*}
E^{2-\delta/2}[P](\tau)\leq CE^{2-\delta/2}[P](0).
\end{align*}
From (\ref{P hierarchy}) with $p=2-\delta/2$ and $\tau_2=2^{k+1}$ and $\tau_1=2^k$, we can find $\bar{\tau}_k\in [2^k,2^{k+1}]$ such that
\begin{align*}
E^{1-\delta/2}[P](\bar{\tau}_k)\leq \frac{C}{\bar{\tau}_k}\bar{E}[P]^{\leq 1}.
\end{align*}
Then $E^{1-\delta/2}[P](\tau)\leq \frac{C}{\tau}\bar{E}[P]^{\leq 1}$ follows from applying (\ref{P hierarchy}) with $p=1-\delta/2$. By the interpolation, we have for all $p\in [1-\delta/2,2-\delta/2]$,
\begin{align*}
E^p[P](\tau)\leq& C\bar{E}[P]^{\leq 1} \tau^{p-2+\delta/2}.
\end{align*}
From (\ref{P hierarchy}) with $p=1+\delta/2$, $\tau_2=2^{k+1}$ and $\tau_1=2^k$, we can find $\bar{\tau}_k\in [2^k,2^{k+1}]$ such that
\begin{align*}
E^{\delta/2}[P](\bar{\tau}_k)\leq \frac{C}{\bar{\tau}_k}E^{1+\delta/2,\leq 1}[P](2^k)\leq C \bar{E}[P]^{\leq 2} \bar{\tau}_k^{-2+\delta}.
\end{align*}
Applying (\ref{P hierarchy}) one more time with $p=\delta/2$ the result follows. The proof for $Q$ is the same.
\end{proof}
\begin{theorem}\label{thm_H0}
We have for any $p\in [\delta,2-\delta]$
\begin{align}
E^p[H_0]\leq C\Big( \bar{E}[H_0]^{\leq 2+m}+\bar{E}[P]^{\leq 5+m} \Big)\tau^{-2+p+\delta},
\end{align}
and
\begin{align}
\int_{\tau_1}^{\tau_2}E^{p-1}[H_0](\tau)d\tau\leq C\Big( \bar{E}[H_0]^{\leq 3+m}+\bar{E}[P]^{\leq 6+m} \Big)\tau_1^{-2+p+\delta},
\end{align}
where $m$ is the integer defined by $m(\delta):=\left \lceil -\log_2\delta \right \rceil+1$.
\end{theorem}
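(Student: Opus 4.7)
The plan is to combine the inhomogeneous $r^p$-hierarchy of Proposition (\ref{H_0 hierarchy}) with the decay of $P$ supplied by Corollary \ref{thm_PQ}, in a Dafermos--Rodnianski style dyadic/pigeonhole iteration. First I would control the source term in (\ref{H_0 hierarchy}) by Cauchy--Schwarz in space:
\begin{equation*}
\int_{\Sigma_\tau} r^{p/2}\sqrt{e^p[H_0]}\cdot\frac{|P|}{r^3}\,dVol_{\Sigma_\tau}\leq \sqrt{E^p[H_0](\tau)}\cdot \Bigl(\int_{\Sigma_\tau} r^{p-6}|P|^2\,dVol_{\Sigma_\tau}\Bigr)^{1/2}.
\end{equation*}
Because $r\geq 2M$ and $r^{p-2}|P|^2\leq e^p[P]$, the second factor is bounded by $CE^p[P](\tau)^{1/2}$, so by Corollary \ref{thm_PQ} (applied with $p\in[\delta,2-\delta]$) the total error satisfies
\begin{equation*}
\mathrm{Error}(\tau_1,\tau_2)\ \leq\ C\int_{\tau_1}^{\tau_2}\sqrt{E^p[H_0](s)}\,\sqrt{\bar{E}[P]^{\leq 2}}\,s^{(p-2+\delta/2)/2}\,ds.
\end{equation*}

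Next I would run a first, rough pass: at the top level $p=2-\delta$, insert this error bound, take $X(\tau)=\sup_{s\leq \tau}E^{2-\delta}[H_0](s)^{1/2}$, apply AM--GM absorption to get an initial (possibly growing) bound $E^{2-\delta}[H_0](\tau)\lesssim (\bar{E}[H_0]^{\leq 2}+\bar{E}[P]^{\leq 2})\tau^{\alpha_0}$ for some explicit $\alpha_0<2$. Then I would exploit the spacetime term $\int E^{p-1,deg}[H_0]\,d\tau$ on the LHS of the hierarchy: on each dyadic interval $[2^k,2^{k+1}]$, the pigeonhole principle supplies a good time $\bar\tau_k\in[2^k,2^{k+1}]$ at which $E^{p-1,deg}[H_0](\bar\tau_k)$ is smaller by a factor $2^{-k}$ than the averaged bound. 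Propagating forward from $\bar\tau_k$ via the hierarchy at the reduced weight $p-1$, and feeding in the updated decay rate for $P$ from Corollary \ref{thm_PQ}, yields a strictly better decay exponent for $E^{p-1}[H_0]$. Iterating this pigeonhole--propagation scheme a controlled number of times improves the decay exponent geometrically; after $m(\delta)=\lceil -\log_2\delta\rceil+1$ iterations the exponent lands on the desired $p-2+\delta$ for the full range $p\in[\delta,2-\delta]$.

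Two technical devices are needed to make the iteration go through. (i) The degeneracy of $e^{p-1,deg}$ at the photon sphere $r=3M$ means $\int E^{p-1,deg}[H_0]$ does not immediately bound $\int E^{p-1}[H_0]$. I would repair this by commuting (\ref{equation_H0}) with the Killing field $T$ (whose commutator with $\Box$ and with the potential $V_0$ vanishes), and using the elliptic fact that, modulo a compactly supported region where $TH_0$ provides the missing direction, $E^{p-1}[H_0]\lesssim E^{p-1,deg}[H_0]+E^{p-1,deg}[TH_0]$. Each such commutation costs one derivative in the data. (ii) The source $(-2M/r^3)P$ survives every commutation with $T,\Omega_i$, so at level $j$ of the induction I need $E^p[P]^{\leq j+3}$ type estimates; tracking derivatives carefully across the $m$ pigeonhole steps and the $O(1)$ commutations needed for photon-sphere removal gives exactly the orders $\bar{E}[H_0]^{\leq 2+m}$ and $\bar{E}[P]^{\leq 5+m}$ stated in the theorem. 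The second estimate on $\int_{\tau_1}^{\tau_2}E^{p-1}[H_0]\,d\tau$ then drops out of the final application of the hierarchy, since its LHS already contains this spacetime integral, at the cost of one more derivative in the data.

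The main obstacle I anticipate is the absence of a free decay margin in the forcing: $E^q[P]$ only decays at the borderline rate $\tau^{q-2+\delta/2}$, so there is no room to lose a factor of $\tau^{\delta/2}$ in a single Gronwall step. This forces the iterative halving scheme (the source of the logarithmic $m$) rather than a one-shot argument, and requires balancing the Cauchy--Schwarz weight between $e^p[H_0]$ and $r^{p-6}|P|^2$ so that the LHS spacetime term $\int E^{p-1,deg}[H_0]$ absorbs the $H_0$ half cleanly while the $P$ half lands exactly on an $E^q[P]$ with $q\in[\delta/2,2-\delta/2]$ covered by Corollary \ref{thm_PQ}.
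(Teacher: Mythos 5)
Your overall strategy is the paper's: bound the source by Cauchy--Schwarz, close a Gronwall/ODE inequality, pigeonhole the spacetime term on dyadic intervals, and iterate $m(\delta)$ times while paying one derivative per step for the photon-sphere degeneracy. However, there is a concrete quantitative gap at your very first step, and it propagates. In your spatial Cauchy--Schwarz you bound $\int_{\Sigma_\tau} r^{p-6}|P|^2$ by $CE^{p}[P](\tau)$, i.e.\ you put the $P$-factor at the \emph{same} weight $p$ as $H_0$. For $p=2-\delta$ this gives an error integrand $\sqrt{E^{2-\delta}[H_0](s)}\,s^{(p-2+\delta/2)/2}$ whose $P$-part decays only like $s^{-\delta/4}$ and is not integrable in time; your own Gronwall step then yields a \emph{growing} top-level bound $E^{2-\delta}[H_0]\lesssim \tau^{\alpha_0}$, as you concede. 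The paper instead uses $r^{p-6}\lesssim r^{(\delta/2-1)-2}$ (valid for all $p\le 2-\delta$, since the coupling carries a full $r^{-3}$) to land the $P$-factor on the \emph{fixed lowest} weight $E^{\delta/2-1}[P]$, whose square root is time-integrable by Corollary \ref{thm_PQ} (the integrated bound $\int E^{\delta/2-1}[P]\,d\tau\lesssim \bar{E}[P]^{\leq 3}\tau_1^{-2+\delta}$ plus dyadic Cauchy--Schwarz). This is what makes $E^{2-\delta}[H_0]$ \emph{bounded} at the first pass, which is the seed the whole iteration and the stated derivative counts $2+m$, $5+m$ depend on. With your growing seed, the number of iterations needed to reach $-2+p+\delta$ is no longer $m(\delta)=\lceil-\log_2\delta\rceil+1$ and the derivative bookkeeping in the theorem does not come out.

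A secondary inaccuracy: you attribute the geometric improvement of the exponent to "feeding in the updated decay rate for $P$." In the paper $P$'s decay is fixed once and for all by Corollary \ref{thm_PQ}; the gain per step comes from the refined estimate (\ref{linear estimate photon}), which isolates a near--photon-sphere cross term $\int\int_{2M\le r\le 4M}\sqrt{e[H_0]}\,|P|$ bounded by $\bigl(\int E^{-\delta}[H_0]\bigr)^{1/2}\bigl(\int E^{\delta/2-1}[P]\bigr)^{1/2}$, into which one feeds back the \emph{previously established decay of $H_0$ itself}. This produces the recursion $\lambda_{j+1}=\max\{-2+2\delta,\ \tfrac{1}{2}\lambda_j+\tfrac{1}{2}(-2+\delta)\}$ whose fixed point is $-2+\delta$; that is the actual source of the logarithmic $m$. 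Both issues are repairable within your framework (choose the low weight $\delta/2-1$ for $P$ in every Cauchy--Schwarz, and run the bootstrap on $H_0$'s own decay in the compact region), but as written the proof does not close.
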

\begin{proof}
From (\ref{H_0 hierarchy}) with $p=2-\delta$,
\begin{align}
E^{2-\delta}[H_0](\tau)+\int_0^{\tau}E^{1-\delta,deg}[H_0](s)ds\leq C\left(E^{2-\delta}[H_0](0)+\int_{0}^\tau \sqrt{E^{2-\delta}[H_0](s)}\sqrt{E^{\delta/2-1}[P](s)}ds\right).
\end{align}
Denote the right hand side by $F(\tau)$. Then $F$ satisfies an ordinary differential inequality
\begin{align*}
\frac{d}{d\tau} F(\tau)\leq C \sqrt{F(\tau)}\sqrt{E^{\delta/2-1}[P](\tau)}.
\end{align*}
Because
\begin{align*}
\int_1^\infty \sqrt{E^{\delta/2-1}[P](s)} ds&=\sum_{k=0}^{\infty}\int_{2^k}^{2^{k+1}}\sqrt{E^{\delta/2-1}[P](s)} ds\\
                                          &\leq\sum_{k=0}^{\infty} \left(\int_{2^k}^{2^{k+1}}E^{\delta/2-1}[P](s) ds \right)^{1/2}(2^k)^{1/2}\\
                                          &\leq \sum_{k=0}^{\infty} C\sqrt{\bar{E}[P]^{\leq 3}}(2^k)^{\frac{\delta-1}{2}}\leq C\sqrt{\bar{E}[P]^{\leq 3}},
\end{align*}
we obtain 
\begin{align*}
\sqrt{F(\tau)}\leq \sqrt{F(0)}+C\sqrt{\bar{E}[P]^{\leq 3}}).
\end{align*}
Therefore,
\begin{align}
E^{2-\delta}[H_0](\tau)\leq C\Big( \bar{E}[H_0]+\bar{E}[P]^{\leq 3} \Big),\\
\int_{\tau_1}^{\tau_2}E^{1-\delta}[H_0](\tau)d\tau \leq C\Big( \bar{E}[H_0]^{\leq 1}+\bar{E}[P]^{\leq 4} \Big).
\end{align}
From mean value theorem, there exists $\bar{\tau}_k\in [2^k,2^{k+1}]$ such that
\begin{align*}
E^{1-\delta}[H_0](\bar{\tau}_k)\leq C(\bar{E}[H_0]^{\leq 1}+\bar{E}[P]^{\leq 4})\bar{\tau}_k^{-1}.
\end{align*}
For any $\tau\in (\bar{\tau}_k,\bar{\tau}_{k+1})$, we have
\begin{align*}
&E^{1-\delta}[H_0](\tau)+\int_{\bar{\tau}_k}^\tau E^{-\delta,deg}[H_0](s)ds\\
\leq &C\left( E^{1-\delta}[H_0](\bar{\tau}_k)+\int_{\bar{\tau}_k}^\tau \sqrt{E^{1-\delta}[H_0](s)}\sqrt{E^{\delta/2-1}[P](s)}ds \right).
\end{align*}
Again denote the right hand side by $F(\tau)$. We have the same inequality 
\begin{align*}
\frac{d}{d\tau} F(\tau)\leq C \sqrt{F(\tau)}\sqrt{E^{\delta/2-1}[P](\tau)}.
\end{align*}
Integrating from $\bar{\tau}_k$ to $\tau$, we get
\begin{align*}
\sqrt{F(\tau)}&\leq \sqrt{F(\bar{\tau}_k)}+C\sqrt{\bar{E}[P]^{\leq 3}}\tau^{\frac{\delta-1}{2}}\\
              &\leq C(\sqrt{\bar{E}[H_0]^{\leq 1}}+\sqrt{\bar{E}[P]^{\leq 4}})\tau^{\frac{\delta-1}{2}}.
\end{align*}
Therefore
\begin{align}
E^{1-\delta}[H_0](\tau)\leq  C({\bar{E}[H_0]^{\leq 1}}+{\bar{E}[P]^{\leq 4}})\tau^{-1+\delta},
\end{align}
and
\begin{align}
\int_{\tau_1}^{\tau_2} E^{-\delta}[H_0](\tau)d\tau \leq C({\bar{E}[H_0]^{\leq 2}}+{\bar{E}[P]^{\leq 5}})\tau_1^{-1+\delta}.
\end{align}
To get better decay, we first note that by Cauchy-Schwarz, (\ref{H_0 hierarchy}) implies
\begin{equation}\label{linear estimate photon}
\begin{split}
&E^p[H_0](\tau_2)+\int_{\tau_1}^{\tau_2} E^{p-1,\deg}[H_0](\tau) d\tau\\
\leq & C \left( E^p[H_0](\tau_1)+\int_{\tau_1}^{\tau_2} \int_{\Sigma_\tau} r^{p-5}|P|^2 dVol_\tau d\tau +\int_{\tau_1}^{\tau_2} \int_{\Sigma_{\tau}\cap \{2M\leq r \leq 4M\}} \sqrt{e [H_0]}|P| dVol_\tau d\tau \right).
\end{split}
\end{equation}
Replacing (\ref{H_0 hierarchy}) by (\ref{linear estimate photon}) in estimating $E^{1-\delta}[H_0]$, we get
\begin{align*}
&E^{1-\delta}[H_0](\tau)+\int_{\bar{\tau}_k}^\tau E^{-\delta,deg}[H_0](s)ds \\
\leq & C\left( E^{1-\delta}[H_0](\bar{\tau}_k)+\int_{\bar{\tau}_k}^\tau E^{\delta/2-1}[P](s)ds+\left(\int_{\bar{\tau}_k}^\tau E^{-\delta}[H_0](s)ds\right)^{1/2}\left(\int_{\bar{\tau}_k}^\tau E^{\delta/2-1}[P](s)ds\right)^{1/2} \right)\\
 \leq & C\left( (\bar{E}[H_0]^{\leq 1}+\bar{E}[P]^{\leq 4})\bar{\tau}_k^{-1}+\bar{E}[P]^{\leq 3}\bar{\tau}_{k}^{-2+\delta}+(\bar{E}[H_0]^{\leq 2}+\bar{E}[P]^{\leq 5})\tau^{-3/2+\delta} \right)\\
 \leq & C(\bar{E}[H_0]^{\leq 2}+\bar{E}[P]^{\leq 5})\tau^{-1}.
\end{align*}
From the interpolation and the mean value theorem, we can again obtain $\bar{\tau}_k\in [2^k,2^{k+1}]$ such that
\begin{align*}
E^{\delta}[H_0](\bar{\tau}_k)\leq C(\bar{E}[H_0]^{\leq 3}+\bar{E}[P]^{\leq 6})\bar{\tau}_k^{-2+2\delta}.
\end{align*}
For any $\tau\in (\bar{\tau}_k,\bar{\tau}_{k+1})$, we applying (\ref{linear estimate photon}) with $p=\delta$ in $[\bar{\tau}_k,\tau]$ to obtain
\begin{align*}
&E^{\delta}[H_0](\tau)+\int_{\bar{\tau}_k}^\tau E^{-1+\delta,deg}[H_0](s)ds \\
\leq &C\left( E^\delta[H_0](\bar{\tau}_k)+\int_{\bar{\tau}_k}^\tau E^{\delta/2-1}[P](s)ds+\left( \int_{\bar{\tau}_k}^\tau E^{-\delta}[H_0](s)ds \right)^{1/2}\left( \int_{\bar{\tau}_k}^\tau E^{\delta/2-1}[P](s)ds \right)^{1/2} \right)\\
\leq &C\Big( \bar{E}[H_0]^{\leq 3}+\bar{E}[P]^{\leq 6} \Big)\tau^{-3/2+\delta/2}.
\end{align*}
Once we get decay of $e[H_0]$ near $r=3M$, we can use it to obtain better decay at the expense of loss of derivatives. Systematically, we have
\begin{equation}
\begin{split}
&E^{\delta}[H_0](\tau)+\int_{\bar{\tau}_k}^\tau E^{-1+\delta,deg}[H_0](s)ds \\
\leq &C\Big( \bar{E}[H_0]^{\leq 2+j}+\bar{E}[P]^{\leq 5+j} \Big)\tau^{\lambda_j},
\end{split}
\end{equation}
where $\lambda_j=\max\left\{-2+2\delta,\left(-1\right) \left(\frac{1}{2^j}\right)+\left( -2+\delta \right)\left(\frac{2^j-1}{2^j}\right)\right\}$. The number $m$ is chosen such that $\lambda_m=-2+2\delta$.

\end{proof}

\section{Estimate for $H_1$ and $H_2$}
In this section, we prove the decay estimate for $H_1$ and $H_2$. The proof is similar to the one in the previous section. In the Minkowski spacetime where $M=0$, (\ref{eqiation_H1H2}) is diagonalizable since it corresponds to scalar linear wave equations. For $M>0$, we use (\ref{wavegauge}), (\ref{def_P}) and (\ref{def_Q}) to control the off-diagonal term by $P,Q$ and $H_0$. Through this section we assume $H_1$ and $H_2$ are supported on the harmonic number $\ell\geq 2$.
\subsection{$T$ vector field}
Consider
\begin{align*}
J_1[H_1]_a&:=\Big(T_{ab}[H_1]-\frac{1}{2}V_1|H_1|^2g_{ab}\Big)T^b,\\
J_1[H_2]_a&:=\Big(T_{ab}[H_2]-\frac{1}{2}V_2|H_2|^2g_{ab}\Big)T^b.
\end{align*}
Then
\begin{align*}
\Div J_1[H_1]&=\frac{1}{r^2}\left(1-\frac{3M}{r}\right)D^\dagger H_2\cdot\nabla_T H_1,\\
\Div J_1[H_2]&=\frac{2}{r^2}D H_1\cdot\nabla_T H_2.
\end{align*}
Note that
\begin{align*}
&\nabla^a\Big(\left(-\frac{2}{r^2} D H_1\cdot H_2 g_{ab} \right)T^b\Big)\\
=&-\frac{2}{r^2}D \nabla_T H_1\cdot H_2-\frac{2}{r^2}DH_1\cdot\nabla_T H_2\\
=_s&-\frac{2}{r^2}\nabla_T H_1\cdot D^\dagger H_2-\frac{2}{r^2}DH_1\cdot\nabla_T H_2.
\end{align*}
Let $h_1(r)$ be a non-increasing function in $r$, $h_1(r)= 5$ in $[2M,4M]$ and $h_1(r)\geq 1$. The precise requirement of $h_1(r)$ will be determined in subsection 4.5. We define
\begin{align}
Q_{ab}&:=2\Big(T_{ab}[H_1]-\frac{1}{2}V_1|H_1|^2g_{ab}\Big)+\Big(T_{ab}[H_2]-\frac{1}{2}V_2|H_2|^2g_{ab}\Big)-\left(\frac{2}{r^2} D H_1\cdot H_2 g_{ab} \right),
\end{align}
\begin{align}
J_1[H_1,H_2]_a&:=Q_{ab}T^b+2(h_1(r)-1)J_1[H_1]_a,\\
K_1[H_1,H_2]&:=\Div J_1[H_1,H_2],\\
e_1[H_1,H_2]&:=J_1[H_1,H_2]\cdot n_{\Sigma_\tau}.
\end{align}
Then
\begin{align*}
K_1[H_1,H_2]&=_s\mathring{K}_1[H_1,H_2]+Err_1[H_1,H_2],\\
\mathring{K}_1[H_1,H_2]&=\left(-\frac{1}{2}h_1'(r)\right)\left( |\nabla_{\underline{L}}H_1|^2-|\nabla_L H_1|^2 \right),\\
Err_1[H_1,H_2]&=\left( -\frac{6M}{r^3}+\frac{2(h_1(r)-1)}{r^2}\left(1-\frac{3M}{r} \right) \right) D^\dagger H_2\cdot \nabla_T H_1.
\end{align*}
Now we show that $e_1[H_1,H_2]$ is comparable to $e^T[H_1]+e^T[H_2]$ when $H_1$ and $H_2$ are supported on $\ell\geq 2$. Let $f_1:=-T\cdot n_{\Sigma_\tau}\geq 0$. Then
\begin{align*}
e_1[H_1,H_2]\geq f_1\left( h_1|\nablas H_1|^2+\frac{1}{2}|\nablas H_2|^2+h_1V_1 |H_1|^2+\frac{1}{2}V_2 |H_2|^2+\frac{2}{r^2}DH_1\cdot H_2 \right).
\end{align*}
As $H_1$ and $H_2$ are supported on the harmonic number $\ell$,
\begin{align*}
e_1[H_1,H_2]\geq_s &\frac{f_1}{r^2}\left( h_1\left(\ell(\ell+1)-1+5-\frac{18M}{r}\right)|H_1|^2+\frac{1}{2}\Big( \ell(\ell+1)-4+2 \Big)|H_2|^2\right.\\
&\left.-2\Big(2\ell(\ell+1)-4\Big)^{1/2}|H_1||H_2| \right).
\end{align*}
From $h_1\geq 1$, the above quadratic form is non-negative in $[3M,\infty)$ and strictly positive in $[4M,\infty)$. In fact, it's strictly positive in $[2M,\infty)$ if $\ell\geq 3$. From $h_1=5$ in $[2M,4M]$, the strict positivity also holds in $[2M,4M]$ for $\ell=2$.
\subsection{red-shift vector field}
Consider a vector $Y$ such that
\begin{align*}
Y\Big|_{r=2M}&=-2\frac{\partial}{\partial R},\\
\nabla_Y Y\Big|_{r=2M}&=-sT-\sigma Y,
\end{align*}
for some $s,\sigma>0$ to be determined. We define
\begin{align}
J_2[H_1,H_2]_a&:=\Big(T_{ab}[H_1]+T_{ab}[H_2]\Big)Y^b,\\
K_2[H_1,H_2]&:=\Div J_2[H_1,H_2],\\
e_2[H_1,H_2]&:=J_2[H_1,H_2]\cdot n_{\Sigma_\tau}.
\end{align}
From (\ref{g_redshift}), we have at $r=2M$,
\begin{align*}
K_2[H_1,H_2]=&\left(\frac{s}{2}|\nabla_v H_1|^2+\frac{1}{2M}|\nabla_R H_1|^2+\frac{2}{M}\nabla_v H_1\nabla_R H_1+\frac{\sigma}{2}|\nablas H_1|^2\right)\\
            +&\left(\frac{s}{2}|\nabla_v H_2|^2+\frac{1}{2M}|\nabla_R H_2|^2+\frac{2}{M}\nabla_v H_2\nabla_R H_2+\frac{\sigma}{2}|\nablas H_2|^2\right)\\
            +&\left(V_1H_1+\frac{1}{r^2}\left(1-\frac{3M}{r}\right)D^\dagger H_2\right)\cdot\nabla_Y H_1+\left(V_2H_2+\frac{2}{r^2}DH_1\right)\cdot\nabla_Y H_2.
\end{align*}
By choosing $s=\sigma$ large enough, we have $K_2[H_1,H_2]\gtrsim_s e[H_1]+e[H_2]$ on the horizon. We don't need to introduce the error term for the red-shift current, but for consistency of notation, we still denote
\begin{align*}
K_2[H_1,H_2]&=\mathring{K}_2[H_1,H_2]+Err_2[H_1,H_2],\\
Err_2[H_1,H_2]&=0.
\end{align*}
 Extending $Y$ smoothly to the exterior as in subsection \ref{sr_H0}, we obtain
\begin{lemma}
\begin{equation*}
\begin{array}{cc}
\mathring{K}_2[H_1,H_2] \gtrsim_s e[H_1]+e[H_2] & \textup{in}\ [2M,r_0],\\
\Big|\mathring{K}_2[H_1,H_2]\Big| \lesssim_s e[H_1]+e[H_2] & \textup{in}\ [r_0,r_1],\\
\mathring{K}_2[H_1,H_2] = 0 & \textup{in}\ [r_1,\infty).
\end{array}
\end{equation*}
Furthermore, $e_2[H_1,H_2]\geq_s 0$ and
\begin{equation*}
\begin{split}
e_2[H_1,H_2]+e_1[H_1,H_2]&\gtrsim_s e[H_1]+e[H_2].
\end{split}
\end{equation*}
\end{lemma}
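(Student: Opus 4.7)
The plan is to mirror the $H_0$ red-shift construction of subsection~\ref{sr_H0}, now for the coupled pair $(H_1,H_2)$. Starting from the explicit formula for $K_2[H_1,H_2]$ at $r=2M$ displayed just above the lemma, I isolate for each $i\in\{1,2\}$ the diagonal block
\[
\tfrac{s}{2}|\nabla_v H_i|^2+\tfrac{1}{2M}|\nabla_R H_i|^2+\tfrac{2}{M}\nabla_v H_i\cdot\nabla_R H_i+\tfrac{\sigma}{2}|\nablas H_i|^2,
\]
which is a positive-definite quadratic form in $(\nabla_v H_i,\nabla_R H_i,\nablas H_i)$ as soon as $s=\sigma$ is sufficiently large, e.g.\ $s=\sigma>4/M$. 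The remaining terms at the horizon take the schematic form $(\text{lower order})\cdot\nabla_Y H_i$, comprising the potentials $V_i H_i\cdot\nabla_Y H_i$ together with the off-diagonal angular couplings $r^{-2}(1-3M/r)D^\dagger H_2\cdot\nabla_Y H_1$ and $2r^{-2}DH_1\cdot\nabla_Y H_2$. I absorb them via Cauchy--Schwarz: the $|\nabla_Y H_i|$ factor is absorbed into the diagonal block with a small coefficient, while the resulting $|H_i|^2$, $|DH_1|^2$, $|D^\dagger H_2|^2$ contributions are controlled after spherical integration by $|\nablas H_i|^2$ through the integral identities for $D,D^\dagger$ combined with Poincar\'e, which here crucially requires $\ell\ge 2$. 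Taking $s=\sigma$ large enough then yields $\mathring{K}_2[H_1,H_2]\gtrsim_s e[H_1]+e[H_2]$ at $r=2M$.

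Smoothness in $r$ extends this coercivity to a neighborhood $[2M,r_0]$ with $r_0\in(2M,r_1)$. I then extend $Y$ smoothly beyond $r_0$, keeping it future non-spacelike, and require $Y\equiv 0$ on $[r_1,\infty)$; this immediately forces $\mathring{K}_2[H_1,H_2]=0$ for $r\ge r_1$. On the transition region $[r_0,r_1]$, smoothness of $Y$ and boundedness of all coefficients, together with the $\ell\ge 2$ Poincar\'e inequality used to dominate the zeroth-order contributions by angular derivatives, automatically yield $|\mathring{K}_2[H_1,H_2]|\lesssim_s e[H_1]+e[H_2]$.

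For the boundary term, the extended $Y$ is future-directed and non-spacelike throughout $[2M,r_1]$---at the horizon itself $Y=-2\,\partial/\partial R$ is future-directed null in the incoming Eddington--Finkelstein coordinates---and $n_{\Sigma_\tau}$ is future timelike, so the dominant energy condition for $T_{ab}[\Phi]$ recorded after its definition gives $T_{ab}[H_i]\,Y^a n_{\Sigma_\tau}^b\ge 0$ pointwise; summing over $i$ yields $e_2[H_1,H_2]\ge 0$. For the combined bound $e_1+e_2\gtrsim_s e[H_1]+e[H_2]$, I note that $e_1[H_1,H_2]$, by the positivity discussion for $\ell\ge 2$ in the previous subsection, controls $|\nabla_T H_i|^2$, $|\nablas H_i|^2$, and $|H_i|^2/r^2$; the only direction in which it degenerates is the transverse $\partial/\partial\rho$ direction at the horizon, where $T$ becomes null. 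Since $Y=-2\,\partial/\partial R$ is transverse to the horizon, $e_2$ supplies the missing $|\nabla_\rho H_i|^2$ contribution, so the sum dominates $e[H_1]+e[H_2]$ as claimed.

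The principal obstacle is the coupling of the two equations by the off-diagonal angular terms $DH_1\cdot\nabla_Y H_2$ and $D^\dagger H_2\cdot\nabla_Y H_1$: one must verify that these cross terms at the horizon are absorbable by the diagonal positive-definite block without destroying coercivity. This is exactly where the assumption $\ell\ge 2$ is used, so that Poincar\'e controls both $|H_i|^2$ and $|DH_i|^2$ by $|\nablas H_i|^2$ after spherical integration, and where the freedom to enlarge $s=\sigma$ compensates the constants generated by Cauchy--Schwarz.
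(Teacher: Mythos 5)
Your proposal is correct and follows essentially the same route as the paper, which likewise reads off the positive-definite diagonal blocks from the displayed horizon formula for $K_2[H_1,H_2]$, absorbs the potential and off-diagonal $D$, $D^\dagger$ couplings by Cauchy--Schwarz and the spherical Poincar\'e inequality for $s=\sigma$ large, and then extends $Y$ exactly as in subsection \ref{sr_H0}; the boundary claims follow from the energy condition and the non-degeneracy of $Y$ transverse to the horizon, as you say. The only cosmetic remark is that the Poincar\'e step for the $\mathcal{L}(-1)$ terms already works for $\ell\geq 1$; the restriction $\ell\geq 2$ is the standing assumption of the section rather than something this particular absorption forces.
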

\subsection{Morawetz vector field}
For any radial function $f(r)$, let $X=f\D\frac{\partial}{\partial r}$ and $\omega^X=\D\left(\frac{\partial f}{\partial r}+\frac{2f}{r}\right)$. From (\ref{g_Morawetz}),
\begin{align*}
&\nabla^a\left(T_{ab}[H_1]X^b+\frac{1}{4}\omega^X \nabla_a |H_1|^2-\frac{1}{4}\nabla_a\omega^X |H_1|^2-\frac{1}{2}V_1|H_1|^2 X_a\right)\\
=&\D^2\frac{\partial f}{\partial r}|\nabla_r H_1|^2+\frac{f}{r}\left(1-\frac{3M}{r}\right)|\nablas H_1|^2+\left(-\frac{1}{4}\Box\omega^X-\frac{1}{2}\nabla_X V_1-\frac{M}{r^2} fV_1\right)|H_1|^2\\
&+\frac{1}{r^2}\left(1-\frac{3M}{r}\right)D^\dagger H_2\cdot\left( \nabla_X H_1+\frac{1}{2}\omega^X H_1 \right).
\end{align*}
Similarly,
\begin{align*}
&\nabla^a\left(T_{ab}[H_2]X^b+\frac{1}{4}\omega^X \nabla_a |H_2|^2-\frac{1}{4}\nabla_a\omega^X |H_2|^2-\frac{1}{2}V_2|H_2|^2 X_a\right)\\
=&\D^2\frac{\partial f}{\partial r}|\nabla_r H_2|^2+\frac{f}{r}\left(1-\frac{3M}{r}\right)|\nablas H_2|^2+\left(-\frac{1}{4}\Box\omega^X-\frac{1}{2}\nabla_X V_2-\frac{M}{r^2} fV_1\right)|H_2|^2\\
&+\frac{2}{r^2}D H_1\cdot\left( \nabla_X H_2+\frac{1}{2}\omega^X H_2 \right).
\end{align*}
Also,
\begin{align*}
&\nabla^a\left(-\frac{2}{r^2}DH_1\cdot H_2 X_a\right)\\
=_s&-\frac{2}{r^2}\nabla_X H_1\cdot D^\dagger H_2-\frac{2}{r^2}DH_1\cdot\nabla_X H_2-\nabla_X\left(\frac{2}{r^2}\right)DH_1\cdot H_2\\
&-\frac{1}{r^2}\left(\omega^X+\frac{2M}{r^2}f\right)DH_1\cdot H_2-\frac{1}{r^2}\left(\omega^X+\frac{2M}{r^2}f\right)H_1\cdot D^\dagger H_2.
\end{align*}
We define,
\begin{equation}
\begin{split}
\tilde{J}_3[H_1,H_2]_a:=&2\left(T_{ab}[H_1]X^b+\frac{1}{4}\omega^X \nabla_a |H_1|^2-\frac{1}{4}\nabla_a\omega^X |H_1|^2-\frac{1}{2}V_1|H_1|^2 X_a\right)\\
                        &+\left(T_{ab}[H_2]X^b+\frac{1}{4}\omega^X \nabla_a |H_2|^2-\frac{1}{4}\nabla_a\omega^X |H_2|^2-\frac{1}{2}V_2|H_2|^2 X_a\right)\\
                        &-\frac{2}{r^2}DH_1\cdot H_2 X_a.
\end{split}
\end{equation}
Then
\begin{equation}\label{H_12_Morawetz_far}
\begin{split}
&\Div \tilde{J}_3[H_1,H_2]\\
=_s &2\D^2\frac{\partial f}{\partial r}|\nabla_r H_1|^2+\frac{2f}{r}\left(1-\frac{3M}{r}\right)|\nablas H_1|^2+2\left(-\frac{1}{4}\Box\omega^X-\frac{1}{2}\nabla_X V_1-\frac{M}{r^2} fV_1\right)|H_1|^2\\
                             +&\D^2\frac{\partial f}{\partial r}|\nabla_r H_2|^2+\frac{f}{r}\left(1-\frac{3M}{r}\right)|\nablas H_2|^2+\left(-\frac{1}{4}\Box\omega^X-\frac{1}{2}\nabla_X V_2-\frac{M}{r^2} fV_1\right)|H_2|^2\\
                             &-\frac{6M}{r^3}D^\dagger H_2\cdot\nabla_X H_1+\left( -\nabla_X\left(\frac{2}{r^2}\right)-\frac{4M}{r^4}f-\frac{3M}{r^3}\omega^X \right) DH_1\cdot H_2.
\end{split}                             
\end{equation}
Let
\begin{align}
Err_{3,far}[H_1,H_2]&:=-\frac{6M}{r^3}D^\dagger H_2\cdot\nabla_X H_1.
\end{align}
We note that as $f\approx 1$ and $\omega^X\approx \frac{2}{r}$, $\Div \tilde{J}_3[H_1,H_2]-Err_{3,far}[H_1,H_2]$ is positive after integrating on $S^2$ for $r$ large enough. The reason is that the leading terms correspond the Morawetz estimate in the Minkowski spacetime. The main goal here is to make the bulk term positive in compact $r$ region up to quadratic terms involving $P,Q$ and $H_0$. To achieve this, we define the Morawetz current in $[2M,3M]$ and in $[3M,\infty)$ separately. We first consider the region $[3M,\infty)$. Denote by 
\begin{align*}
B_1&:=\left(-\frac{1}{4}\Box\omega^X-\frac{1}{2}\nabla_X V_1-\frac{M}{r^2} fV_1\right),\\
B_2&:=\left(-\frac{1}{4}\Box\omega^X-\frac{1}{2}\nabla_X V_2-\frac{M}{r^2} fV_2\right),
\end{align*}
and
\begin{align*}
E_1:=&-\frac{6M}{r^3}D^\dagger H_2\cdot\nabla_X H_1+\left( -\nabla_X\left(\frac{2}{r^2}\right)-\frac{4M}{r^4}f-\frac{3M}{r^3}\omega^X \right) DH_1\cdot H_2\\
    =_s&-\frac{6M}{r^3}\D f D^\dagger H_2\cdot\nabla_r H_1+\frac{f}{r^3}\left(4-\frac{18M}{r}+\frac{12M^2}{r^2}\right)DH_1\cdot H_2 \\
    &-\frac{3M}{r^3}\D \frac{\partial f}{\partial r} H_1\cdot D^\dagger H_2.
\end{align*}
By using (\ref{wavegauge}) to replace $D^\dagger H_2$, we have
\begin{align*}
E_1=_s &\frac{f}{r^3}\left(4-\frac{18M}{r}+\frac{12M^2}{r^2}\right) DH_1\cdot H_2-\frac{12M}{r^2}f\nabla_T H_0\cdot\nabla_r H_1-\frac{6M}{r^2}\frac{\partial f}{\partial r}\nabla_T H_0\cdot H_1\\
   &+\frac{12M}{r^2}\D f|\nabla_r H_1|^2+\frac{36M}{r^3}\D f H_1\cdot\nabla_r H_1\\
   &+\frac{6M}{r^2}\D \frac{\partial f}{\partial r}H_1\cdot\nabla_r H_1+\frac{18M}{r^3}\D\frac{\partial f}{\partial r}|H_1|^2.
\end{align*}
By Cauchy-Schwarz, 
\begin{align*}
\frac{6M}{r^2}\D \frac{\partial f}{\partial r}H_1\cdot\nabla_r H_1+\frac{18M}{r^3}\D\frac{\partial f}{\partial r}|H_1|^2\geq -\frac{M}{2r}\D\frac{\partial f}{\partial r}|\nabla_r H_1|^2.
\end{align*}
Thus
\begin{align*}
\Div \tilde{J}_3[H_1,H_2]\geq_s &\left(2\D^2\frac{\partial f}{\partial r}-\frac{M}{2r}\D\frac{\partial f}{\partial r}+\frac{12M}{r^2}\D f\right)|\nabla_r H_1|^2\\
&+\frac{2f}{r}\left(1-\frac{3M}{r}\right)|\nablas H_1|^2+2B_1|H_1|^2\\
                             &+\D^2\frac{\partial f}{\partial r}|\nabla_r H_2|^2+\frac{f}{r}\left(1-\frac{3M}{r}\right)|\nablas H_2|^2+B_2|H_2|^2\\
                             &+\frac{f}{r^3}\left(4-\frac{18M}{r}+\frac{12M^2}{r^2}\right) DH_1\cdot H_2-\frac{12M}{r^2}f\nabla_T H_0\cdot\nabla_r H_1-\frac{6M}{r^2}\frac{\partial f}{\partial r}\nabla_T H_0\cdot H_1\\
                             &+\frac{36M}{r^3}\D fH_1\cdot\nabla_r H_1.
\end{align*}
By Cauchy-Schwarz again,
\begin{align*}
\Div \tilde{J}_3[H_1,H_2]\geq_s &\frac{2f}{r}\left(1-\frac{3M}{r}\right)|\nablas H_1|^2+(2B_1-A_1)|H_1|^2\\
                               &+\D^2\frac{\partial f}{\partial r}|\nabla_r H_2|^2+\frac{f}{r}\left(1-\frac{3M}{r}\right)|\nablas H_2|^2+B_2|H_2|^2\\
                               &+\frac{f}{r^3}\left(4-\frac{18M}{r}+\frac{12M^2}{r^2}\right) DH_1\cdot H_2-\frac{12M}{r^2}f\nabla_T H_0\cdot\nabla_r H_1-\frac{6M}{r^2}\frac{\partial f}{\partial r}\nabla_T H_0\cdot H_1 ,
\end{align*}
where
\begin{align*}
A_1=\left(\frac{36M}{r^3}\D f\right)^2\bigg/4\left(2\D^2\frac{\partial f}{\partial r}-\frac{M}{2r}\D\frac{\partial f}{\partial r}+\frac{12M}{r^2}\D f\right).
\end{align*}
Suppose $H_1$ and $H_2$ are supported on the harmonic number $\ell$. Then
\begin{align*}
&\Div \tilde{J}_3[H_1,H_2]-\left(-\frac{12M}{r^2}f\nabla_T H_0\cdot\nabla_r H_1-\frac{6M}{r^2}\frac{\partial f}{\partial r}\nabla_T H_0\cdot H_1\right)\\
\geq_s&\left(2B_1+\frac{2f}{r^3}\left(1-\frac{3M}{r}\right)(\ell(\ell+1)-1)-A_1\right)|H_1|^2\\
                               &+\left( B_2+\frac{f}{r^3}\left(1-\frac{3M}{r}\right)(\ell(\ell+1)-4) \right)|H_2|^2\\
                               &-\frac{f}{r^3}\left| 4-\frac{18M}{r}+\frac{12M^2}{r^2} \right|\sqrt{2\ell(\ell+1)-4}|H_1||H_2|.
\end{align*}
We choose $f=\left(1+\frac{3M}{r}+\frac{2M^2}{r^2}\right)\left(1-\frac{3M}{r}\right)$ and denote by $\lambda=\ell (\ell+1)$. The discriminant of the above quadratic terms is
\begin{align*}
-D_{out}=&4\left(2B_1-A_1+\frac{2f}{r^3}\left(1-\frac{3M}{r}\right)(\lambda-1)\right)\left( B_2+\frac{f}{r^3}\left(1-\frac{3M}{r}\right)(\lambda-4) \right)\\
          &-\frac{f^2}{r^6}\left( 4-\frac{18M}{r}+\frac{12M^2}{r^2} \right)^2 (2\lambda-4).
\end{align*}
As $\lambda$ is large, $-D_{out}$ is about
\begin{align*}
&4\left(2B_1-A_1+\frac{2f}{r^3}\left(1-\frac{3M}{r}\right)\lambda\right)\left( B_2+\frac{f}{r^3}\left(1-\frac{3M}{r}\right)\lambda \right)\\	
          &-\frac{2f^2}{r^6}\left( 4-\frac{18M}{r}+\frac{12M^2}{r^2} \right)^2\lambda\\
        =&4\left((2B_1-A_1)B_2+\frac{2f^2}{r^6}\left(1-\frac{3M}{r}\right)^2\lambda^2\right)\\
         &+\frac{f}{r^3}\left(1-\frac{3M}{r}\right)\left( 4(2B_1+2B_2-A_1)-\frac{2f}{r^3\left(1-\frac{3M}{r}\right)}\left( 4-\frac{18M}{r}+\frac{12M^2}{r^2} \right)^2 \right)\lambda.
\end{align*}
By using Mathematica, we check that
\begin{align*}
(2B_1-A_1)B_2+\frac{2f^2}{r^6}\left(1-\frac{3M}{r}\right)^2\lambda^2
\end{align*}
and
\begin{align*}
4(2B_1+2B_2-A_1)-\frac{2f}{r^3\left(1-\frac{3M}{r}\right)}\left( 4-\frac{18M}{r}+\frac{12M^2}{r^2} \right)^2
\end{align*}
are positive in $[3M,3.3M]$ as $\ell\geq 3$. For $r\geq 3.3 M$, $-D_{out}$ is larger than
\begin{align*}
&\frac{2f^2}{r^6}\left(1-\frac{3M}{r}\right)^2\lambda^2-\frac{2f^2}{r^6}\left( 4-\frac{18M}{r}+\frac{12M^2}{r^2} \right)^2\lambda\\
=&\frac{2f^2}{r^6}\left(1-\frac{3M}{r}\right)^2\left(\lambda-\left( 4-\frac{18M}{r}+\frac{12M^2}{r^2} \right)^2\bigg/ \left(1-\frac{3M}{r}\right)^2\right)\lambda,
\end{align*}
which is positive as $\ell\geq 4$ since $\left| 4-\frac{18M}{r}+\frac{12M^2}{r^2} \right|\Big/ \left|1-\frac{3M}{r}\right|\leq 4$ in $[3.3M,\infty )$. From the above, we deduce that $-D_{out}>0$ in $[3M,\infty)$ for $\ell\geq 7$. We check $-D_{out}>0$ in $[3M,\infty)$ for the case $3\leq \ell\leq 6$ directly by using Mathematica. For $\ell=2$, we use that
\begin{align*}
\textup{div}\tilde{J}_3=_s&\left(2\D^2\frac{\partial f}{\partial r}+\frac{12M}{r}\D f\right)|\nabla_r H_1|^2\\
                        &+\left(\frac{10f}{r^3}\left(1-\frac{3M}{r}\right)+2B_1+\frac{18M}{r^3}\D\frac{\partial f}{\partial r} \right)|H_1|^2\\
                        &+\left(\frac{36M}{r^3}\D f+\frac{6M}{r^2}\D \frac{\partial f}{\partial r} \right)H_1\cdot\nabla_r H_1+\frac{f}{r^3}\left(4-\frac{18M}{r}+\frac{12M^2}{r^2}\right)DH_1\cdot H_2\\
                        &+\left( \frac{2f}{r^3}\left(1-\frac{3M}{r}\right)+B_2 \right)|H_2|^2+\D^2\frac{\partial f}{\partial r}|\nabla_r H_2|^2\\
                        &-\frac{12M}{r^2}f\nabla_T H_0\cdot \nabla_r H_1-\frac{6M}{r^2}\frac{\partial f}{\partial r}\nabla_T H_0\cdot H_1.
\end{align*}
The first three lines consist of a quadratic form involving $H_1,\ H_2$ and $\nabla_r H_1$ which can be checked to be positive definite in $[3M,\infty)$. \\\\
For $r\leq 3M$, we consider $\bar{X}=\left(1-\frac{3M}{r}\right)X$.
\begin{align*}
&\nabla^a\left(-\frac{2}{r^2}DH_1\cdot H_2 \bar{X}_a\right)\\
=&-\frac{2}{r^2}\left(1-\frac{3M}{r}\right)D\nabla_X H_1\cdot H_2-\frac{2}{r^2}\left(1-\frac{3M}{r}\right)DH_1\cdot\nabla_X H_2\\
                                                           &-\left(1-\frac{3M}{r}\right)\nabla_X\left(\frac{2}{r^2}\right)DH_1\cdot H_2-\frac{2}{r^2}\left(1-\frac{3M}{r}\right)\left(\omega^X+\frac{2M}{r^2}f\right)DH_1\cdot H_2\\
                                                           &-\frac{6M}{r^4}\D f DH_1\cdot H_2.
\end{align*}
\begin{equation}
\begin{split}
\bar{J}_3[H_1,H_2]_a:=&2\left(T_{ab}[H_1]X^b+\frac{1}{4}\omega^X \nabla_a |H_1|^2-\frac{1}{4}\nabla_a\omega^X |H_1|^2-\frac{1}{2}V_1|H_1|^2 X_a\right)\\
                        &+\left(T_{ab}[H_2]X^b+\frac{1}{4}\omega^X \nabla_a |H_2|^2-\frac{1}{4}\nabla_a\omega^X |H_2|^2-\frac{1}{2}V_2|H_2|^2 X_a\right)\\
                        &-\frac{2}{r^2}DH_1\cdot H_2 \bar{X}_a.
\end{split}
\end{equation}
Then
\begin{equation}
\begin{split}
&\Div \bar{J}_3[H_1,H_2]\\
=_s &2\D^2\frac{\partial f}{\partial r}|\nabla_r H_1|^2+\frac{2f}{r}\left(1-\frac{3M}{r}\right)|\nablas H_1|^2+2B_2|H_1|^2\\
                             &+\D^2\frac{\partial f}{\partial r}|\nabla_r H_2|^2+\frac{f}{r}\left(1-\frac{3M}{r}\right)|\nablas H_2|^2+B_1|H_2|^2\\
                             &+\frac{6M}{r^3}\D fDH_1\cdot\nabla_r H_2 +\frac{3M}{r^3}\D \frac{\partial f}{\partial r}DH_1\cdot H_2+\frac{4f}{r^3}\left(1-\frac{3M}{r}\right)^2 DH_1\cdot H_2.                        
\end{split}                             
\end{equation}
Denote the last line by $E_2$ and use (\ref{def_Q}) to replace $DH_1$,
\begin{align*}
E_2:=&\frac{6M}{r^3}\D fDH_1\cdot\nabla_r H_2 +\frac{3M}{r^3}\D \frac{\partial f}{\partial r}DH_1\cdot H_2+\frac{4f}{r^3}\left(1-\frac{3M}{r}\right)^2 DH_1\cdot H_2 \\
    =_s&-\frac{6M}{r^2}\D^2 f|\nabla_r H_2|^2\\
    &+\left(-\frac{3M}{r^2}\D^2\frac{\partial f}{\partial r}-\frac{4f}{r^2}\D\left(1-\frac{3M}{r}\right)^2\right)H_2\cdot\nabla_r H_2\\
    &+\frac{6M}{r^2}\D f\nabla_r H_2\cdot Q+\left(\frac{3M}{r^2}\D\frac{\partial f}{\partial r}+\frac{4f}{r^2}\left(1-\frac{3M}{r}\right)^2\right)H_2\cdot Q.
\end{align*}
Therefore for a fixed harmonic number $\ell$
\begin{align*}
&\Div \bar{J}_3[H_1,H_2]\\
=_s &\left(\frac{\partial f}{\partial r}-\frac{6M}{r^2}f\right)\D^2|\nabla_r H_2|^2+\left((\ell(\ell+1)-4)\frac{f}{r^3}\left(1-\frac{3M}{r}\right)+B_2\right)|H_2|^2\\
&+\left(-\frac{3M}{r^2}\D^2\frac{\partial f}{\partial r}-\frac{4f}{r^2}\D\left(1-\frac{3M}{r}\right)^2\right)H_2\cdot\nabla_r H_2\\
&+2\D^2\frac{\partial f}{\partial r}|\nabla_r H_1|^2+\left((\ell(\ell+1)-1)\frac{2f}{r^3}\left(1-\frac{3M}{r}\right)+2B_1\right)|H_1|^2\\
&+\frac{6M}{r^2}\D f\nabla_r H_2\cdot Q+\left(\frac{3M}{r^2}\D\frac{\partial f}{\partial r}+\frac{4f}{r^2}\left(1-\frac{3M}{r}\right)^2\right)H_2\cdot Q.
\end{align*}
The first two line consist a quadratic form involving $H_2$ and $\nabla_r H_2$ which increases in $\ell$. We check by Mathematica that it is positive definite for $\ell=2$ in $[2M,3M]$.
\\\\
Now we define $J_3[H_1,H_2]$ by
\begin{equation}
\begin{split}
J_3[H_1,H_2]:=&\hat{J}_3[H_1,H_2]-\frac{1}{2}(\epsilon_1g)\nabla_a(2|H_1|^2+|H_2|^2)+\frac{1}{2}\nabla_a(\epsilon_1 g)(2|H_1|^2+|H_2|^2),\\
\hat{J}_3[H_1,H_2]:=& \tilde{J}_3[H_1,H_2]\ \ \textup{in}\ \ [3M,\infty),\\
\hat{J}_3[H_1,H_2]:=& \bar{J}_3[H_1,H_2]\ \ \textup{in}\ \ [2M,3M].
\end{split}              
\end{equation}
Because $J_3[H_1,H_2]$ is continuous and piecewise smooth, we can still apply the divergence theorem. The bulk term and the boundary term are defined as before.
\begin{align}
K_3[H_1,H_2]:=&\Div J_3[H_1,H_2],\\
e_3[H_1,H_2]:=&J_3[H_1,H_2]\cdot n_{\Sigma_\tau}.
\end{align}
\begin{align*}
K_3[H_1,H_2]&=\mathring{K}_3[H_1,H_2]+Err_3[H_1,H_2]\\
            &=\mathring{K}_{3,far}[H_1,H_2]+Err_{3,far}[H_1,H_2].\\
\end{align*}
\begin{align*}
Err_{3,far}[H_1,H_2]&=-\frac{6M}{r^3}D^\dagger H_2\cdot\nabla_X H_1,\\
Err_3[H_1,H_2]&=\frac{6M}{r^2}\D f \nabla_rH_2\cdot Q\\
              &+\left(\frac{3M}{r^2}\D\frac{\partial f}{\partial r}+\frac{4f}{r^2}\left(1-\frac{3M}{r}\right)^2\right)H_2\cdot Q\ \ \textup{in}\ \ [2M,3M],\\
Err_3[H_1,H_2]&=-\frac{12M}{r^2}f\nabla_T H_0\cdot\nabla_r H_1-\frac{6M}{r^2}\frac{\partial f}{\partial r}\nabla_T H_0\cdot H_1 \ \ \textup{in}\ \ [3M,\infty).
\end{align*}
\begin{lemma} For $\epsilon_1>0$ small enough, we have $\mathring{K}_3[H_1,H_2]\geq_s 0$. Moreover, in $[r_0,\infty)$,
\begin{align*}
\mathring{K}_3[H_1,H_2]\gtrsim_s &\frac{1}{r^3}|\nabla_r H_1|^2+\frac{1}{r^3}\left(1-\frac{3M}{r}\right)^2|\nabla_t H_1|^2+\frac{1}{r}\left(1-\frac{3M}{r}\right)^2|\nablas H_1|^2+\frac{1}{r^3}|H_1|^2\\
                 &+\frac{1}{r^3}|\nabla_r H_2|^2+\frac{1}{r^3}\left(1-\frac{3M}{r}\right)^2|\nabla_t H_2|^2+\frac{1}{r}\left(1-\frac{3M}{r}\right)^2|\nablas H_2|^2+\frac{1}{r^3}|H_2|^2.
\end{align*}
For any $R_2$ large enough, we have in $[R_2,\infty)$,
\begin{align*}
\mathring{K}_{3,far}[H_1,H_2]\gtrsim_s &\frac{1}{r^3}|\nabla_r H_1|^2+\frac{1}{r^3}\left(1-\frac{3M}{r}\right)^2|\nabla_t H_1|^2+\frac{1}{r}\left(1-\frac{3M}{r}\right)^2|\nablas H_1|^2+\frac{1}{r^3}|H_1|^2\\
                 &+\frac{1}{r^3}|\nabla_r H_2|^2+\frac{1}{r^3}\left(1-\frac{3M}{r}\right)^2|\nabla_t H_2|^2+\frac{1}{r}\left(1-\frac{3M}{r}\right)^2|\nablas H_2|^2+\frac{1}{r^3}|H_2|^2.
\end{align*}
Furthermore, for bounded $r$ the error term is bounded by
\begin{align*}                 
\Big(Err_3[H_1,H_2]\Big)^2&\lesssim \Big(|\nabla_r H_1|^2+|\nabla_r H_2|^2+|H_1|^2+|H_2|^2\Big)\Big(e[H_0]+e[Q]\Big).
\end{align*}
\end{lemma}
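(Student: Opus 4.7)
My plan is to prove the three claims simultaneously by analyzing the quadratic form underlying $\Div J_3[H_1,H_2]$ after integration on $S^2$ and the Poincar\'e inequality. In the near region $[2M,3M]$ where $J_3 = \bar{J}_3$ plus the $\epsilon_1 g$ correction, I would substitute $DH_1$ using the definition (\ref{def_Q}) of $Q$ to isolate the $Q$-dependent pieces into $Err_3$; the remainder becomes a quadratic form in $(H_1, H_2, \nabla_r H_2)$ whose coefficients are monotone in $\ell$. Symmetrically, in the far region $[3M,\infty)$ where $J_3 = \tilde{J}_3$ plus the correction, I would substitute $D^\dagger H_2$ using the harmonic gauge relation (\ref{wavegauge}) to move the $H_0$-dependent pieces into $Err_3$, leaving a quadratic form in $(H_1, H_2, \nabla_r H_1)$. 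The multiplier $f=(1+3M/r+2M^2/r^2)(1-3M/r)$ vanishes at the photon sphere, which is precisely what produces the degeneracy factor $(1-3M/r)^2$ on the $|\nablas H_i|^2$ coefficients of the final bound.

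The crux is non-negativity of these residual quadratic forms, which reduces to non-negativity of their discriminants. In the far region, I would use that the discriminant scales like $\lambda^2/r^6$ with positive leading coefficient for $r\geq 3.3M$ once $\ell\geq 4$; the remaining windows $[3M,3.3M]$ for $3\leq\ell\leq 6$, and $[3M,\infty)$ for $\ell=2,3$, are handled by direct computer-algebra verification (with $\ell=2$ requiring retention of the cross term $H_1\cdot\nabla_r H_1$ in a slightly different regrouping that keeps the three-variable quadratic form in $H_1$, $H_2$, $\nabla_r H_1$ intact). In the near region the coefficients increase in $\ell$, so positive definiteness on $[2M,3M]$ for $\ell=2$ (again checked by Mathematica) propagates to all $\ell\geq 2$. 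Once non-negativity is in hand, the coercive lower bound in $[r_0,\infty)$ follows by inspection of the explicit coefficients, with the $|\nabla_t H_i|^2$ contribution supplied by the $\epsilon_1 g$ correction exactly as in the $H_0$ analysis of subsection \ref{sM_H0}; one then fixes $\epsilon_1>0$ small enough that this perturbation does not destroy positive definiteness while still providing the $(1-3M/r)^2/r^3$ weight on $|\nabla_t H_i|^2$.

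For the far region bound on $\mathring{K}_{3,far}$, I would take $R_2$ large enough that $f$ and $\omega^X$ differ from their Minkowski limits $1$ and $2/r$ by at most $O(M/r)$. At leading order the integrand reduces to the Minkowski Morawetz quadratic form for two decoupled scalar waves, which is coercive with the stated weights; the $O(M/r)$ perturbation is subordinate for $R_2$ large. For the error bound, I would apply Cauchy-Schwarz termwise to the explicit expressions: in $[3M,\infty)$ each term in $Err_3$ pairs $\nabla_T H_0$ with $\nabla_r H_1$ or $H_1$, giving $|Err_3|^2\lesssim (|\nabla_r H_1|^2+|H_1|^2)|\nabla_T H_0|^2\leq (\ldots)\,e[H_0]$; in $[2M,3M]$ each term pairs $Q$ with $\nabla_r H_2$ or $H_2$, and for bounded $r$ one has $|Q|^2\lesssim e[Q]$, which yields the other half of the stated bound.

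The main obstacle is the positive-definiteness of the residual quadratic forms for low $\ell$: the clean large-$\ell$ asymptotic argument only covers $\ell\geq 7$ in the far region, so the remaining cases --- in particular the $\ell=2$ case, which is genuinely borderline and requires a nonstandard grouping of the $H_1\cdot\nabla_r H_1$ cross term --- necessarily rely on case-by-case computer algebra. Once those finitely many checks are in place, the remaining algebraic bookkeeping (substitutions via (\ref{wavegauge}) and (\ref{def_Q}), Cauchy-Schwarz completions of squares to absorb cross terms like $H_1\cdot\nabla_r H_1$, and the $\epsilon_1 g$ perturbation controlled by smallness of $\epsilon_1$) is routine but tedious.
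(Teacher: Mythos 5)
Your proposal follows essentially the same route as the paper: the split of the Morawetz current at $r=3M$ into $\bar J_3$ and $\tilde J_3$, the substitution of $DH_1$ via (\ref{def_Q}) in the near region and of $D^\dagger H_2$ via (\ref{wavegauge}) in the far region, the discriminant/large-$\lambda$ argument covering $\ell\geq 7$ with Mathematica checks for $2\leq\ell\leq 6$ (and the special three-variable regrouping for $\ell=2$), the choice $g=\frac{1}{r^3}\left(1-\frac{2M}{r}\right)\left(1-\frac{3M}{r}\right)^2$ with $\epsilon_1$ small to recover the $|\nabla_t H_i|^2$ terms, and termwise Cauchy--Schwarz on the explicit $Err_3$ expressions. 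This matches the paper's proof, which is exactly the preceding discussion in the subsection plus the $\epsilon_1 g$ perturbation argument.
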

\begin{proof}
As $\epsilon_1=0$, the result follows the above discussion except the absence of $|\nabla_t H_1|^2$ and $|\nabla_t H_2|^2$ terms. By choosing $g(r)=\frac{1}{r^3}\D\left(1-\frac{3M}{r}\right)^2\geq 0$ and $\epsilon_1>0$  small enough, we can add a term comparable to $\frac{1}{r^3}(|\nabla_t H_1|^2+|\nabla_t H_2|^2)$ to $\mathring{K}_3$ and $\mathring{K}_{3,far}$ without disturbing the positivity of other terms.
\end{proof}
By repeating the proof of Lemma \ref{H0_Morawetz_boundary}, we have
\begin{lemma}
There exists $C_0\geq 1$ such that we have the following estimates: 
\begin{align*}
2e^X[H_1]+e^X[H_2]\geq_s -C_0 e_1[H_1,H_2],
\end{align*}
For any $R_1$ large enough,
\begin{align*}
\left|\int_{\Sigma_\tau} e_3[H_1,H_2]-2e^X[H_1]-e^X[H_2] dVol_{\Sigma_\tau}\right|\leq &C_0\int_{\Sigma_\tau\cap\{2M\leq r\leq R_1\}} e_1[H_1,H_2]dVol_{\Sigma_\tau}\\
                                                                    +&C_0\int_{\Sigma_\tau\cap\{R_1\leq r\}} e^L[H_1]+e^L[H_2]dVol_{\Sigma_\tau}.
\end{align*}
\end{lemma}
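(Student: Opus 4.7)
The plan is to mimic the proof of Lemma \ref{H0_Morawetz_boundary} almost verbatim, treating $H_1,H_2$ in parallel and carrying along the new cross-term $-\tfrac{2}{r^2}DH_1\cdot H_2\,\hat{X}_a$ (with $\hat{X}=X$ on $[3M,\infty)$ and $\hat{X}=\bar{X}=(1-3M/r)X$ on $[2M,3M]$) that enters $\hat{J}_3[H_1,H_2]$.

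For the first inequality, since $(1-2M/r)\partial_r+\partial_t$ is future null, $X+|f|T$ is future causal; the energy condition for the stress tensors of $H_1$ and $H_2$ contracted with the future timelike $n_{\Sigma_\tau}$ then gives $e^{X+|f|T}[H_i]\geq_s 0$ for $i=1,2$. This rearranges to $2e^X[H_1]+e^X[H_2]\geq_s -|f|\bigl(2e^T[H_1]+e^T[H_2]\bigr)$, and from subsection~4.1 (the Poincar\'e inequality on $S^2$ for $\ell\geq 2$ applied to the quadratic form defining $e_1[H_1,H_2]$) we know $e_1[H_1,H_2]\gtrsim_s e^T[H_1]+e^T[H_2]$, which completes the argument.

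For the second inequality I would expand
\begin{align*}
e_3[H_1,H_2]-2e^X[H_1]-e^X[H_2] =& -\tfrac{1}{2}\bigl(2V_1|H_1|^2+V_2|H_2|^2\bigr)X\cdot n_{\Sigma_\tau}-\tfrac{2}{r^2}DH_1\cdot H_2\,\hat{X}\cdot n_{\Sigma_\tau}\\
& +\tfrac{1}{2}(\omega^X-2\epsilon_1 g)\nabla_{n_{\Sigma_\tau}}\!\bigl(2|H_1|^2+|H_2|^2\bigr)\\
& -\tfrac{1}{4}\nabla_{n_{\Sigma_\tau}}(\omega^X-2\epsilon_1 g)\bigl(2|H_1|^2+|H_2|^2\bigr)
\end{align*}
and split the integration at $r=R_1$. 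On $[2M,R_1]$ every coefficient is uniformly bounded, and each term is controlled by $C\bigl(e^T[H_1]+e^T[H_2]\bigr)\lesssim_s C e_1[H_1,H_2]$ via Cauchy--Schwarz combined with the Poincar\'e bounds $|H_i|^2/r^2\lesssim_s|\nablas H_i|^2$ and $|DH_1|^2/r^2\lesssim_s|\nablas H_1|^2$ (valid for $\ell\geq 2$).

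On $[R_1,\infty)$ we have $n_{\Sigma_\tau}\propto\partial_t$, so $X\cdot n_{\Sigma_\tau}=0$ and $\nabla_{n_{\Sigma_\tau}}(\omega^X-2\epsilon_1 g)=0$; in particular the first, second and last lines of the display vanish. Only $\tfrac{1}{2}(\omega^X-2\epsilon_1 g)H_i\cdot\nabla_{n_{\Sigma_\tau}}H_i$ survives for each $i$, which I would handle identically to the $H_0$ case: rewriting $\nabla_{n_{\Sigma_\tau}}$ in the $\{L,\underline{L}\}$ frame produces a $\tfrac{1}{r}H_i\cdot\nabla_L H_i$ piece (controlled by $e^L[H_i]$ through Cauchy--Schwarz and Poincar\'e) plus a $-\tfrac{1}{2r}\D^{1/2}\nabla_r|H_i|^2$ piece which, after integration by parts in $r$ over $\Sigma_\tau\cap\{r\geq R_1\}$, yields a bulk $\tfrac{1}{r^2}|H_i|^2$ term again absorbed into $e^L[H_i]$ and a boundary term at $r=R_1$ absorbed into the bounded-$r$ integral. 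The main obstacle I anticipate is purely bookkeeping: the cross-term $-\tfrac{2}{r^2}DH_1\cdot H_2\,\hat{X}\cdot n_{\Sigma_\tau}$ is new, but because $\hat{X}\cdot n_{\Sigma_\tau}$ vanishes outside a bounded $r$-range exactly as $X\cdot n_{\Sigma_\tau}$ does, no analytic input beyond those in Lemma~\ref{H0_Morawetz_boundary} is required.
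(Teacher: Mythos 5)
Your proposal is correct and follows essentially the same route as the paper, which proves this lemma simply by declaring that one repeats the proof of Lemma \ref{H0_Morawetz_boundary}; your treatment of the cross-term $-\tfrac{2}{r^2}DH_1\cdot H_2\,\hat{X}\cdot n_{\Sigma_\tau}$ (it vanishes for $r\geq r_1$ since $\hat X$ is radial and $n_{\Sigma_\tau}\propto\partial_t$ there, and is absorbed into $e_1[H_1,H_2]$ via Cauchy--Schwarz and Poincar\'e on the bounded region) is exactly the only new ingredient needed. The coefficient in your expansion should be $\tfrac14(\omega^X-2\epsilon_1 g)\nabla_{n_{\Sigma_\tau}}(2|H_1|^2+|H_2|^2)$ rather than $\tfrac12(\cdots)$, but this constant is immaterial to the estimate.
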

\subsection{$r^p$-hierarchy}
The $r^p$-hierarchy deals with the region where $r$ is large and the contribution of $M$ is negligible. Therefore in principle the estimate in this subsection follows the ordinary $r^p$-hierarchy in the Minkowski spacetime. Define
\begin{equation}
\begin{split}
&\tilde{J}^p_4[H_1]_a\\
:=&\frac{r^p}{r^2\D} T_{ab}[rH_1]L^b-\frac{Mr^p}{r^2\D}|rH_1|^2L_a-\frac{r^p}{2r^2\D}V_1|rH_1|^2L_a,
\end{split}
\end{equation}
\begin{equation}
\begin{split}
&\tilde{J}^p_4[H_2]_a\\
:=&\frac{r^p}{r^2\D} T_{ab}[rH_2]L^b-\frac{Mr^p}{r^2\D}|rH_2|^2L_a-\frac{r^p}{2r^2\D}V_2|rH_2|^2L_a.
\end{split}
\end{equation}
From (\ref{g_rp}),
\begin{align*}
&\Div \tilde{J}_4^p[H_1]\\
=&\left(1-\frac{2M}{r}\right)^{-2}r^{p-3}\left( \frac{p}{2}\left(1-\frac{2M}{r}\right)-\frac{M}{r} \right)|\nabla_L (rH_1)|^2+\left(1-\frac{p}{2}\right)r^{p-3}|\slashed{\nabla}(rH_1)|^2\\
                     &+\left( (3-p)Mr^{p-6}-\frac{r^p}{r^2\left(1-\frac{2M}{r}\right)}\frac{1}{r}\left(1-\frac{M}{r}\right)V_1-\left(1-\frac{2M}{r}\right)\partial_r\left( \frac{r^p}{2r^2(1-\mu)}V_1 \right) \right)|(rH_1)|^2\\
                     &+\frac{r^p}{r^2\D}\Big(\nabla_L (rH_1)\Big)\cdot r\left( \frac{1}{r^2}\left(1-\frac{3M}{r}\right)D^\dagger H_2 \right).
\end{align*}
\begin{align*}
&\Div \tilde{J}_4^p[H_2]\\
=&\left(1-\frac{2M}{r}\right)^{-2}r^{p-3}\left( \frac{p}{2}\left(1-\frac{2M}{r}\right)-\frac{M}{r} \right)|\nabla_L (rH_2)|^2+\left(1-\frac{p}{2}\right)r^{p-3}|\slashed{\nabla}(rH_2)|^2\\
                     &+\left( (3-p)Mr^{p-6}-\frac{r^p}{r^2\left(1-\frac{2M}{r}\right)}\frac{1}{r}\left(1-\frac{M}{r}\right)V_2-\left(1-\frac{2M}{r}\right)\partial_r\left( \frac{r^p}{2r^2(1-\mu)}V_2 \right) \right)|(rH_2)|^2\\
                     &+\frac{r^p}{r^2 \D}\Big(\nabla_L (rH_2)\Big)\cdot r\left( \frac{2}{r^2}D H_1 \right).
\end{align*}
Also,
\begin{align*}
&\nabla^a\left(\frac{r^p}{r^2\D}\frac{2}{r^2}(r DH_1)\cdot (rH_2)L_a\right)\\
=_s&2(p-2)r^{p-3} DH_1\cdot H_2+l.o.t.\\
&+\frac{r^p}{r^2\D}\frac{2}{r^2}\bigg(\nabla_L(rH_2)\cdot (rDH_1)+\nabla_L(rH_1)\cdot (rD^\dagger H_2)\bigg).
\end{align*}
We define
\begin{align}
\tilde{J}^p_4[H_1,H_2]_a:=2\tilde{J}^p_4[H_1]_a+\tilde{J}^p_4[H_2]_a-\frac{r^p}{r^2\D}\frac{2}{r^2}(rH_1)\cdot (rH_2)L_a.
\end{align}
Then the leading order terms in $\Div \tilde{J}^p_4[H_1,H_2]$ are
\begin{align*}
\Div \tilde{J}^p_4[H_1,H_2]=_s&pr^{p-3}|\nabla_L (rH_1)|^2+(2-p)r^{p-3}|\nablas (rH_1)|^2+5(2-p)r^{p-5}|rH_1|^2\\
                          +&\frac{p}{2}r^{p-3}|\nabla_L (rH_2)|^2+(1-\frac{p}{2})r^{p-3}|\nablas (rH_2)|^2+(2-p)r^{p-5}|rH_2|^2+\\
                          -&2(p-2)r^{p-3}DH_1\cdot H_2+\frac{r^p}{r^2\D}\left(-\frac{6M}{r^3}\right)\nabla_L (rH_1)\cdot (rD^\dagger H_2)+l.o.t.
\end{align*}
Therefore, as $r$ is large enough,
\begin{align*}
\Div \tilde{J}^p_4[H_1,H_2]\gtrsim_s &r^{p-1}|\nabla_L H_1|^2+r^{p-1}|\nablas H_1|^2+r^{p-3}|H_1|^2\\
                          +&r^{p-1}|\nabla_L H_2|^2+r^{p-1}|\nablas H_2|^2+r^{p-3}|H_2|^2.
\end{align*}
Consider
\begin{align*}
\nabla^a\left(r^{p-2}Q_{ab}T^b\right)=&-6Mr^{p-5}D^\dagger H_2 \nabla_T H_1\\
                                      &+\left(\frac{2-p}{4}\right)r^{p-3}\left(2|\nabla_{\underline{L}}H_1|^2-2|\nabla_L H_1|^2+|\nabla_{\underline{L}}H_2|^2-|\nabla_L H_2|^2\right).	
\end{align*}
Function $\eta(r)$ is chosen to be a cut-off function such that $\eta=1$ for $r\geq R_0$ and $\eta=0$ for $r\leq R_0-M$. We define
\begin{align}
J^p_4[H_1,H_2]_a&:=\eta\left( \tilde{J}^p_{4}[H_1,H_2]+r^{p-2}Q_{ab}T^b \right),\\
K^p_4[H_1,H_2]&:=\Div J^p_4[H_1,H_2],\\
e^p_4[H_1,H_2]&:=J^p_4[H_1,H_2]\cdot n_{\Sigma\tau}.
\end{align}
In this case $K^p_4[H_1,H_2]$ is positive for large $r\geq R_0$ and we don't need the error term. Nevertheless, for consistency of notation, we still denote
\begin{align*}
K^p_4[H_1,H_2]&=\mathring{K}^p_4[H_1,H_2]+Err^p_4[H_1,H_2],\\
Err^p_4[H_1,H_2]&=0.
\end{align*} 
In summary, we obtain
\begin{lemma} There exists a constant $C_1>0$ such that
\begin{align*}
\begin{array}{cc}
\mathring{K}_4^p[H_1,H_2]\geq \frac{1}{C_1} (e^{p-1}[H_1]+e^{p-1}[H_2]) & \textup{in}\ [R_0,\infty),\\
\Big|\mathring{K}_4^p[H_1,H_2]\Big|\lesssim e[H_1]+e[H_2] & \textup{in}\ [R_0-M,R_0],\\
\mathring{K}_4^p[H_1,H_2]=0 & \textup{in}\ [2M,R_0-M].
\end{array}
\end{align*}
Furthermore, $e^p_4[H_1,H_2]$ is non-negative and
\begin{align*}
\begin{array}{cc}
e^p_4[H_1,H_2]\approx e^p[H_1]+e^p[H_2] & \textup{in}\ [R_0,\infty).
\end{array}
\end{align*}
\end{lemma}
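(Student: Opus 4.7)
My plan is to handle the three regional estimates on $\mathring{K}^p_4[H_1,H_2]$ and the boundary equivalence separately, using the same structure as the scalar treatment of $H_0$ in subsection \ref{sp_H0} but keeping track of the $H_1\leftrightarrow H_2$ coupling. The three regions are dictated by the cut-off $\eta$ appearing in $J^p_4[H_1,H_2]$: on $[2M,R_0-M]$ we have $\eta\equiv 0$ and the vanishing statement is immediate; on $[R_0,\infty)$ we have $\eta\equiv 1$ and we need full coercivity; on the transition layer $[R_0-M,R_0]$ the functions $\eta,\eta'$ are bounded and $r$ is bounded, so a crude pointwise bound suffices.

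For the far region $\{r\geq R_0\}$, I would add the two divergence identities already computed before the statement. The expression $\Div \tilde{J}^p_4[H_1,H_2]$ was shown, by discarding lower order terms for $r$ large, to satisfy
\begin{align*}
\Div \tilde{J}^p_4[H_1,H_2]\gtrsim_s\sum_{i=1,2}\bigl(r^{p-1}|\nabla_L H_i|^2+r^{p-1}|\nablas H_i|^2+r^{p-3}|H_i|^2\bigr),
\end{align*}
but the $r^{p-3}|\nabla_\rho H_i|^2$ contribution needed for $e^{p-1}[H_i]$ is missing. The role of the $r^{p-2}Q_{ab}T^b$ correction is to supply exactly this: its divergence contains $\tfrac{2-p}{4}r^{p-3}\bigl(2|\nabla_{\underline L}H_1|^2+|\nabla_{\underline L}H_2|^2\bigr)$, which via $\nabla_{\underline L}=\nabla_T-(1-2M/r)\nabla_\rho$ yields the missing radial derivative provided $p<2$ (ensured by $p\leq 2-\delta_2$). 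The accompanying negative multiple of $r^{p-3}|\nabla_L H_i|^2$ is of lower weight than the leading $r^{p-1}|\nabla_L H_i|^2$ and is absorbed, while the off-diagonal term $-6Mr^{p-5}D^\dagger H_2\cdot\nabla_T H_1$ is controlled by Cauchy--Schwarz as $\epsilon r^{p-3}|\nabla_T H_1|^2+C\epsilon^{-1}r^{p-7}|D^\dagger H_2|^2$, both absorbed into the coercive diagonal terms once $R_0$ is chosen large depending on $\delta_1,\delta_2$.

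For the transition layer $[R_0-M,R_0]$, each summand of $K^p_4[H_1,H_2]$ consists of smooth coefficients (involving $\eta,\eta'$, both bounded) times quadratic expressions in $H_i,\nabla H_i$ and $DH_1\cdot H_2$; all are pointwise dominated by $e[H_1]+e[H_2]$ with a constant depending on $R_0$, yielding the middle inequality. For the non-negativity and equivalence of $e^p_4$ on $\{r\geq R_0\}$, I would observe that the leading piece $\tfrac{r^p}{r^2(1-2M/r)}T_{ab}[rH_i]L^bn_{\Sigma_\tau}^a$ is non-negative because $L$ and $n_{\Sigma_\tau}$ are future causal, and the $|rH_i|^2 L_a n^a$ and $DH_1\cdot H_2$ boundary corrections are absorbed by the angular part via Poincar\'e on $S^2$ --- this is where $\ell\geq 2$ enters. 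Reading off the $L$-derivative, angular and potential weights then gives $e^p_4[H_1,H_2]\approx e^p[H_1]+e^p[H_2]$.

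The main obstacle is the bookkeeping of two simultaneous smallness mechanisms that must hold uniformly in $p\in[\delta_1,2-\delta_2]$: the coefficient $\tfrac{2-p}{4}$ supplying the radial derivative degenerates as $p\to 2$ (controlled by $\delta_2$), while the absorption of the $r^{p-5}$ off-diagonal mass term into the coercive diagonal requires $R_0$ large depending on $p$. Both are resolved by the continuity of all coefficients in $p$ on the compact interval $[\delta_1,2-\delta_2]$, which permits a single $R_0=R_0(\delta_1,\delta_2)$.
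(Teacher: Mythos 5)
Your proposal is correct and follows essentially the same route as the paper: the paper states this lemma as a summary of the immediately preceding computations, namely the coercivity of $\Div\tilde{J}^p_4[H_1,H_2]$ for large $r$ (after the cross current cancels the top-order $DH_1\cdot H_2$ source terms), the $r^{p-2}Q_{ab}T^b$ correction supplying the missing $|\nabla_{\underline L}H_i|^2$ with its negative $r^{p-3}|\nabla_L H_i|^2$ absorbed by the $r^{p-1}$-weighted terms, the cut-off $\eta$ producing the three regions, and the energy condition plus the $\ell\geq 2$ Poincar\'e inequality for the boundary term. Your remarks on the uniformity in $p\in[\delta_1,2-\delta_2]$ and the role of $\delta_2$ in the coefficient $\frac{2-p}{4}$ are consistent with the paper's setup.
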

\subsection{combination}
We take $\epsilon_2>0$ small such that for all $p\in [\delta_1,2-\delta_2]$,
\begin{align*}
\begin{array}{cc}
\mathring{K}_3[H_1,H_2]\geq 2\epsilon_2 \Big|\mathring{K}^p_4[H_1,H_2]\Big| & \textup{in}\ [R_0-M,R_0]. 
\end{array}
\end{align*}
Then we pick $R_1\geq R_0$ such that 
\begin{align*}
\frac{30C_0}{\log 2}\leq \frac{1}{2C_1}\epsilon_2 R_1^{\delta_1},
\end{align*}
and
\begin{align*}
\begin{array}{cc}
\epsilon_2 e_4^{p=\delta_1}[H_1,H_2]\geq 2C_0 e^L[H_1,H_2] & \textup{in}\ [R_1,\infty).
\end{array}
\end{align*}
We consider a non-increasing function $h_2(r)$ satisfying
\begin{align*}
\begin{array}{cc}
h_2(r)= 3C_0 & \textup{in}\ [2M,R_1],\\
h_2(r)= 1 & \textup{in}\ [2R_1,\infty),\\
|h_2'(r)|\leq \frac{3C_0}{\log 2}r^{-1}& \textup{in}\ [R_1,2R_1].
\end{array}
\end{align*}
Also, we pick the function $h_1(r)$ by
\begin{align*}
\begin{array}{cc}
h_1(r)= 5 & \textup{in}\ [2M,R_1],\\
h_1(r)= 1 & \textup{in}\ [2R_1,\infty),\\
|h_1'(r)|\leq \frac{5}{\log 2}r^{-1}& \textup{in}\ [R_1,2R_1].
\end{array}
\end{align*}
The fifth current is defined by
\begin{align}
J_5[H_1,H_2]_a&:= h_2(r)J_1[H_1,H_2]_a,\\
K_5[H_1,H_2]&:=\Div J_5[H_1,H_2],\\
e_5[H_1,H_2]_a&:=J_5[H_1,H_2]\cdot n_{\Sigma_\tau}.
\end{align}
Also,
\begin{align*}
K_5[H_1,H_2]&=\mathring{K}_5[H_1,H_2]+Err_5[H_1,H_2],\\
\mathring{K}_5[H_1,H_2]&=\left(-\frac{1}{4}h'_2\right)\left(|\nabla_{\underline{L}}H_2|^2-|\nabla_L H_2|^2\right)+\left(-\frac{(h_1h_2)'}{2}\right)\left(|\nabla_{\underline{L}}H_1|^2-|\nabla_L H_1|^2\right),\\
Err_5[H_1,H_2]&=h_2\left(-\frac{6M}{r^3}+\frac{2(h_1-1)}{r^2}\left(1-\frac{3M}{r}\right)\right)D^\dagger H_2\cdot \nabla_T H_1.
\end{align*}
Then we have
\begin{lemma}
\begin{align*}
\begin{array}{cc}
\mathring{K}_5[H_1,H_2]=0 & \textup{in}\ [2M,R_1]\cup [2R_1,\infty),\\
\mathring{K}_5[H_1,H_2]\geq -\frac{30C_0}{(\log 2)r}|\nabla_L H_1|^2-\frac{3C_0}{(\log 2)r}|\nabla_L H_2|^2 & \textup{in}\ [R_1,2R_1].
\end{array}
\end{align*}
For the boundary term,
\begin{align*}
\begin{array}{cc}
e_5[H_1,H_2]= 3C_0 e_1[H_1,H_2] & \textup{in}\ [2M,R_1],\\
e_5[H_1,H_2]\geq e_1[H_1,H_2] & \textup{in}\ [R_1,\infty).
\end{array}
\end{align*}
\end{lemma}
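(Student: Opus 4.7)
The argument is a direct product-rule computation on $J_5 = h_2\,J_1[H_1,H_2]$, using the decomposition
\[
J_1[H_1,H_2]_a = 2h_1\,J_1[H_1]_a + J_1[H_2]_a - \tfrac{2}{r^2}\,DH_1\cdot H_2\,T_a
\]
already implicit in the derivation of $\mathring K_1$. With $\Div(h_2 V) = h_2\,\Div V + (\nabla h_2)^a V_a$ and $(\nabla h_2)^a = h_2'\,\D\,\delta^a_r$, the new radial contribution comes only from the $r$-components of the three constituent $1$-forms. Since $g_{tr}=0$, the cross-piece has $T_r=0$ and does not see this radial gradient; its contribution appears only through $\nabla_T$ of its coefficient, and, after integration by parts on $S^2$, combines with $\Div J_1[H_1] = \tfrac{1}{r^2}(1-\tfrac{3M}{r})D^\dagger H_2\cdot\nabla_T H_1$ and $\Div J_1[H_2] = \tfrac{2}{r^2}DH_1\cdot\nabla_T H_2$ into the single cross term declared to be $Err_5$.

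The new homogeneous terms come from the null-frame identity
\[
\D\,(J_1[H_i])_r = \D\,T_{rt}[H_i] = \tfrac{1}{4}\bigl(|\nabla_L H_i|^2 - |\nabla_{\underline L} H_i|^2\bigr),
\]
obtained by expanding $T_{rt}[H_i] = \nabla_r H_i\cdot \nabla_t H_i$ via $\partial_t=\tfrac{1}{2}(L+\underline L)$, $\D\partial_r=\tfrac{1}{2}(L-\underline L)$ (the $g_{rt}$ piece of the stress-energy tensor vanishing). Multiplying by $2(h_1h_2)'$ and $h_2'$ for the two currents respectively, and adding the already-recorded $h_2\,\mathring K_1 = -\tfrac{h_1'h_2}{2}(|\nabla_{\underline L}H_1|^2-|\nabla_L H_1|^2)$, the $H_1$-coefficient collapses into $-\tfrac{h_1'h_2+h_1h_2'}{2}=-\tfrac{(h_1h_2)'}{2}$, producing the formula for $\mathring K_5$ stated in the text.

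From this the four claims follow immediately. On $[2M,R_1]\cup[2R_1,\infty)$ both $h_1'$ and $h_2'$ vanish so $\mathring K_5\equiv 0$. On $[R_1,2R_1]$, since $(h_1h_2)'\le 0$ and $h_2'\le 0$, I would discard the non-negative $|\nabla_{\underline L}H_i|^2$ pieces and use
\[
|(h_1h_2)'|\le |h_1'|h_2+h_1|h_2'| \le \tfrac{5\cdot 3C_0 + 5\cdot 3C_0}{(\log 2)r} = \tfrac{30C_0}{(\log 2)r},\qquad |h_2'|\le\tfrac{3C_0}{(\log 2)r},
\]
to obtain the stated lower bound. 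For the boundary claims, $e_5 = h_2\,e_1[H_1,H_2]$ pointwise: equality with $3C_0\,e_1$ on $[2M,R_1]$ is immediate from $h_2\equiv 3C_0$, while on $[R_1,\infty)$ one has $h_2\ge 1$ together with the sphere-integrated non-negativity of $e_1[H_1,H_2]$ for $\ell\ge 2$ established at the end of subsection 4.1. The only real obstacle is bookkeeping: verifying that the $h_1'h_2$ from $h_2\,\mathring K_1$ and the $h_1h_2'$ from the radial gradient of $h_2$ recombine into a clean $(h_1h_2)'$, and that the $T$-aligned cross term cancels against the two $\Div J_1[H_i]$ contributions up to the single residue assigned to $Err_5$.
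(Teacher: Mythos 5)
Your proposal is correct and follows essentially the same route the paper takes implicitly: the lemma is read off from the displayed formula for $\mathring{K}_5[H_1,H_2]$ and $e_5=h_2\,e_1[H_1,H_2]$, and your derivation of that formula (product rule on $h_2 J_1[H_1,H_2]$, the null-frame identity $\D\,T_{rt}[H_i]=\tfrac14(|\nabla_L H_i|^2-|\nabla_{\underline L}H_i|^2)$, vanishing of the radial component of the $T$-aligned cross term, and recombination of $h_1'h_2+h_1h_2'$ into $(h_1h_2)'$) reproduces exactly the expression the paper asserts. The subsequent bounds from $h_1'\le 0$, $h_2'\le 0$, the stated derivative bounds, and the sphere-integrated non-negativity of $e_1[H_1,H_2]$ for $\ell\ge 2$ are all as intended.
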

Take $\epsilon_3>0$ such that
\begin{align*}
\begin{array}{cc}
\mathring{K}_3[H_1,H_2]\geq \epsilon_3\Big|\mathring{K}_2[H_1,H_2]\Big| & \textup{in}\ [r_0,r_1].
\end{array}
\end{align*}
Then the final current for $H_1$ and $H_2$ is defined by
\begin{align}
J^p[H_1,H_2]&:=\epsilon_3 J_2[H_1,H_2]+J_3[H_1,H_2]+\epsilon_2 J^p_4[H_1,H_2]+J_5[H_1,H_2],\\
K^{p}[H_1,H_2]&:=\Div J^p[H_1,H_2].
\end{align}
\begin{lemma}\label{H_12_final_current}
By the above construction, we have
\begin{align*}
K^p[H_1,H_2]\gtrsim &e^{p-1,deg}[H_1,H_2]\\
                    -&C\sqrt{e^{p-1,deg}[H_1,H_2]}\sqrt{e^{-1}[H_0,P,Q]}-Ce^{-1}[H_0,P,Q],
\end{align*}
\begin{align*}
\int_{\Sigma_\tau} J^p[H_1,H_2]\cdot n_{\Sigma_\tau} dVol_{\Sigma_\tau}\approx \int_{\Sigma_\tau} e^{p-1}[H_1,H_2] dVol_{\Sigma_\tau}.
\end{align*}
\end{lemma}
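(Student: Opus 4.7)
The plan is to mirror the structure of the proof of Lemma \ref{H_0_final_current}: decompose $[2M,\infty)$ into the seven intervals used there, namely $[2M,r_0)$, $[r_0,r_1)$, $[r_1,R_0-M)$, $[R_0-M,R_0)$, $[R_0,R_1)$, $[R_1,2R_1)$ and $[2R_1,\infty)$; show region-by-region that the principal bulk $\mathring{K}^p[H_1,H_2]=\epsilon_3\mathring{K}_2+\mathring{K}_3+\epsilon_2\mathring{K}^p_4+\mathring{K}_5$ dominates $e^{p-1,deg}[H_1,H_2]$; and then absorb $Err^p[H_1,H_2]$ by weighted Cauchy--Schwarz. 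On each of the seven intervals only one of the four contributions to $\mathring{K}^p$ is potentially negative, and the constants $\epsilon_2,\epsilon_3,R_0,R_1$ and the profiles $h_1,h_2$ prescribed in subsection~4.5 are calibrated exactly as in Section~3 so that the negative piece is absorbed into a positive neighbor. The genuinely new ingredient relative to Section~3 is the cross term $DH_1\cdot H_2$ inside $\mathring{K}_3$, but its positivity on each fixed harmonic mode $\ell\geq 2$ has already been verified (analytically for large $\ell$, by Mathematica for $\ell\in\{2,\dots,6\}$) separately in $[3M,\infty)$ and $[2M,3M]$; this is precisely the step that produces the $H_0$- and $Q$-dependent errors via the substitutions (\ref{wavegauge}) and (\ref{def_Q}) for $D^\dagger H_2$ and $DH_1$ respectively.

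For the inhomogeneous piece, only $Err_3$ and $Err_5$ are nonzero. In $[3M,\infty)$, $Err_3$ pairs $\nabla_T H_0$ with $\nabla_r H_1$ and $H_1$; in $[2M,3M]$, $Err_3$ pairs $Q$ and $\nabla_r Q$ with $H_2$ and $\nabla_r H_2$; and $Err_5$, supported where $h_1'$ or $h_2'$ is nonzero, is a product of $\nabla_T H_1$ with $D^\dagger H_2$ which I rewrite via (\ref{wavegauge}) as a linear combination of $\nabla_t H_0$, $\nabla_r H_1$ and $H_1$. All resulting pairings take the schematic form (derivative of $H_1$ or $H_2$, bounded weight)$\times$(first-order quantity in $H_0,P,Q$), whence a weighted Cauchy--Schwarz against the positive terms of $\mathring{K}^p$ gives
\begin{align*}
|Err^p[H_1,H_2]|\lesssim \sqrt{e^{p-1,deg}[H_1,H_2]}\,\sqrt{e^{-1}[H_0,P,Q]}+e^{-1}[H_0,P,Q],
\end{align*}
the second summand absorbing those pieces in which an $H_1$-factor multiplies an $H_0$-factor after a further Cauchy--Schwarz against the coercive $|H_1|^2$ part. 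Combined with the bulk lower bound this proves the first inequality.

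The boundary-flux equivalence follows the same template as in the proof of Lemma~\ref{H_0_final_current}: for $r\leq R_1$ the possibly-negative remainder $e_3[H_1,H_2]-2e^X[H_1]-e^X[H_2]$ is dominated by $C_0 e_1[H_1,H_2]$ and absorbed by the $3C_0 e_1$ piece that $J_5$ contributes there; for $r\geq R_1$ it is dominated by $e^L[H_1]+e^L[H_2]$, which the sizing of $R_1$ allows $\tfrac12\epsilon_2 e^p_4$ to absorb; and the matching upper bound is immediate since each current is pointwise controlled by $e^p[H_1,H_2]$ after Poincar\'e and Cauchy--Schwarz. The main obstacle I expect is not this assembly, which is mechanical once the subsection~4.5 choices are fixed, but keeping track of the cross term $DH_1\cdot H_2$ through the $\epsilon_1$-perturbation needed to restore the $|\nabla_t H_i|^2$ coercivity and through the $C^0$ (but not $C^1$) junction of $\tilde J_3$ and $\bar J_3$ at $r=3M$; since the discriminants involved have already been shown strictly positive on each side of the junction, the residual task is careful bookkeeping rather than a new estimate.
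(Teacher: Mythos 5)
Your overall architecture is the paper's: the bulk positivity $\mathring{K}^p[H_1,H_2]\gtrsim e^{p-1,deg}[H_1,H_2]$ is obtained by the same seven-interval argument as in Lemma \ref{H_0_final_current}, the only nonzero errors are $Err_3$ and $Err_5$, and the boundary flux is treated verbatim as before. Your handling of $Err_3$ is also the paper's (substitute (\ref{wavegauge}) for $D^\dagger H_2$ in $[3M,\infty)$ and (\ref{def_Q}) in $[2M,3M]$; note the inner-region pairing is $Q$ against $\nabla_r H_2$ and $H_2$, not $\nabla_r Q$).

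The genuine gap is in $Err_5$. First, $Err_5[H_1,H_2]=h_2\bigl(-\tfrac{6M}{r^3}+\tfrac{2(h_1-1)}{r^2}(1-\tfrac{3M}{r})\bigr)D^\dagger H_2\cdot\nabla_T H_1$ is \emph{not} supported where $h_1'$ or $h_2'$ is nonzero --- that is true of $\mathring{K}_5$, not of $Err_5$, which survives for all $r$, in particular at the photon sphere and as $r\to\infty$ where it equals $-\tfrac{6M}{r^3}D^\dagger H_2\cdot\nabla_T H_1$. Second, substituting only (\ref{wavegauge}) for $D^\dagger H_2$ leaves the factor $\nabla_T H_1$ intact and produces the term $\nabla_T H_1\cdot\nabla_r H_1$. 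This is a genuine quadratic in first derivatives of $H_1$ alone: near $r=3M$ it is not bounded by $e^{p-1,deg}[H_1,H_2]$, whose $|\nabla_L H_1|^2$ and $|\nablas H_1|^2$ components carry the factor $\bigl(1-\tfrac{3M}{r}\bigr)^2$ while $\nabla_T=\nabla_L-\D\nabla_\rho$ involves $\nabla_L$; nor is it of the form $\sqrt{e^{p-1,deg}}\sqrt{e^{-1}[H_0,P,Q]}$ or $e^{-1}[H_0,P,Q]$. So your claimed schematic form ``(derivative of $H_1$ or $H_2$)$\times$(quantity in $H_0,P,Q$)'' fails for exactly this term and the weighted Cauchy--Schwarz does not close. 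The missing idea is the additional substitution of $\nabla_T H_1$ by the Regge--Wheeler quantity via (\ref{def_P}), i.e.\ $\nabla_t H_1=\tfrac{1}{r}\D P+\D\nabla_r H_0-\tfrac{1}{r}\D H_0$: the paper uses this together with (\ref{wavegauge}) for bounded $r$ (so that every $H_1$- or $H_2$-factor left in $Err_5$ is the non-degenerately controlled $\nabla_r H_1$ or $H_1$, paired against a pure $H_0,P$ quantity), and by itself for large $r$, where $D^\dagger H_2$ is kept and the error is bounded by $\sqrt{e^{-1}[H_2]}\sqrt{e^{-1}[H_0,P]}$. With that extra substitution the stated bound follows; without it the proof does not close at the photon sphere.
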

\begin{proof}
\begin{align*}
K^p[H_1,H_2]&=\mathring{K}^p[H_1,H_2]+Err^p[H_1,H_2],\\
\mathring{K}^p[H_1,H_2]&=\epsilon_3\mathring{K}_2[H_1,H_2]+\mathring{K}_3[H_1,H_2]+\epsilon_2 \mathring{K}^p_4[H_1,H_2]+\mathring{K}_5[H_1,H_2],\\
Err^p[H_1,H_2]&=\epsilon_3 Err_2[H_1,H_2]+Err_3[H_1,H_2]+\epsilon_2 Err^p_4[H_1,H_2]+Err_5[H_1,H_2].
\end{align*}
By the same argument as in Lemma \ref{H_0_final_current}, $\mathring{K}^p[H_1,H_2]\gtrsim e^{p-1,deg}[H_1,H_2]$. After using (\ref{def_P}) and (\ref{wavegauge}) to replace $\nabla_t H_1$ and $D^\dagger H_2$ in $Err_5[H_1,H_2]$, in any bounded $r$ we have
\begin{align*}
\Big|Err^p[H_1,H_2]\Big|^2\lesssim e^{-1,deg}[H_1,H_2] e^{-1}[H_0,P,Q]+e^{-1}[H_0,P,Q]^2.
\end{align*}
For large $r$, the only negative contribution comes from $K_3[H_1,H_2]$ and $K_5[H_1,H_2]$.
\begin{align*}
Err_{3,far}[H_1,H_2]&= -\frac{6M}{r^3}D^\dagger H_2\cdot \nabla_X H_1\\
            &= \frac{6M}{r^3}fD^\dagger H_2\cdot  \left(-\D^{-1}\nabla_T H_0+\frac{3}{r}H_1+\frac{1}{2r}D^\dagger H_2\right)\\
            &\geq -C(e^{-1}[H_1]+e^{-1}[H_2])-C\sqrt{e^{-1}[H_2]}\sqrt{e^{-1}[H_0]}.
\end{align*}
When $r$ is large enough, we can absorb $e^{-1}[H_1]+e^{-1}[H_2]$ by using $e^{p-1}[H_1]+e^{p-1}[H_2]$ in $\mathring{K}^p_4[H_1,H_2]$. 
\begin{align*}
Err_5[H_1,H_2]&=-\frac{6M}{r^3}D^\dagger H_2\cdot \nabla_T H_1\\
            &=-\frac{6M}{r^3}D^\dagger H_2\cdot\left( \frac{1}{r}\D P+\D\nabla_r H_0-\frac{1}{r}\D H_0 \right)\\
            &\geq -C\sqrt{e^{-1}[H_2]}\sqrt{e^{-1}[H_0,P]}.
\end{align*}
The estimate for the boundary term can be proved by the same way as in Lemma \ref{H_0_final_current}.
\end{proof}
By integrating $K^p[H_1,H_2]$ in $D(\tau_1,\tau_2)$ and using Cauchy-Schwarz inequality, we arrive at
\begin{proposition}
For any $\tau_2\geq \tau_1\geq 0$ and $p\in [\delta,2-\delta]$, we have
\begin{equation}\label{H_12 hierachy}
\begin{split}
&E^p[H_1,H_2](\tau_2)+\int_{\tau_1}^{\tau_2} E^{p-1,deg}[H_1,H_2](\tau)d\tau\\
\leq &C\left( E^p[H_1,H_2](\tau_1)+\int_{\tau_1}^{\tau_2} E^{-1}[H_0,P,Q](\tau)d\tau. \right).
\end{split}
\end{equation}
\end{proposition}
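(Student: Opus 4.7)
The plan is a direct application of the divergence theorem to the current $J^p[H_1,H_2]$ constructed above, combined with Cauchy-Schwarz to absorb the cross error term on the right-hand side of Lemma \ref{H_12_final_current}. First, I would integrate the pointwise identity from that lemma over the spacetime region $D(\tau_1,\tau_2)$, whose boundary is $\Sigma_{\tau_1}\cup\Sigma_{\tau_2}\cup(\mathcal{H}^+\cap D(\tau_1,\tau_2))$. By the second conclusion of Lemma \ref{H_12_final_current}, the spacelike fluxes on $\Sigma_{\tau_1}$ and $\Sigma_{\tau_2}$ are respectively comparable to $E^p[H_1,H_2](\tau_1)$ and $E^p[H_1,H_2](\tau_2)$, which are the terms appearing in the target estimate.

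Next I would verify that the horizon contribution has a favorable sign so that it may be dropped from the upper bound. Each building block of $J^p[H_1,H_2]$ is designed so that its flux across $\mathcal{H}^+$ is either zero or non-negative: the redshift current $J_2$ uses the transverse timelike vector $Y$ so its flux is manifestly non-negative; $J_3$ vanishes on $\{r=2M\}$ since $X=f\D\partial_r$, $\omega^X$ and $\epsilon_1 g$ all vanish there; $J_4^p$ is cut off away from the horizon by $\eta$; and $J_5=h_2J_1[H_1,H_2]$ has horizon flux controlled by the pointwise positive quadratic form derived in subsection 4.1. This is exactly the reason for taking $h_1=5$ on $[2M,4M]$, which makes the form
\[
h_1\bigl(\ell(\ell+1)-1+5-\tfrac{18M}{r}\bigr)|H_1|^2+\tfrac{1}{2}\bigl(\ell(\ell+1)-2\bigr)|H_2|^2-2\sqrt{2\ell(\ell+1)-4}|H_1||H_2|
\]
positive definite at $r=2M$ even in the borderline mode $\ell=2$.

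For the bulk integrand, the first conclusion of Lemma \ref{H_12_final_current} yields
\[
K^p[H_1,H_2]\gtrsim e^{p-1,deg}[H_1,H_2]-C\sqrt{e^{p-1,deg}[H_1,H_2]}\sqrt{e^{-1}[H_0,P,Q]}-Ce^{-1}[H_0,P,Q].
\]
A Cauchy-Schwarz splitting of the form
\[
C\sqrt{e^{p-1,deg}[H_1,H_2]}\sqrt{e^{-1}[H_0,P,Q]}\leq \tfrac{1}{2}e^{p-1,deg}[H_1,H_2]+\tfrac{C^2}{2}e^{-1}[H_0,P,Q]
\]
absorbs half of the coercive bulk density into the left after integration over $D(\tau_1,\tau_2)$, and the remaining error contributes the term $\int_{\tau_1}^{\tau_2}E^{-1}[H_0,P,Q](\tau)d\tau$ on the right. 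Combining the spacelike boundary comparison with this bulk estimate and discarding the non-negative horizon flux produces exactly the claimed inequality.

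I do not anticipate a genuine analytic obstacle here: the substantive construction has already been completed in the previous subsections, and the proposition is essentially an integration of Lemma \ref{H_12_final_current}. The one point that warrants care is confirming the sign of the horizon flux of $J_5$, since this is what allows the $\ell=2$ mode of $H_1,H_2$ to be absorbed cleanly; the delicate choice of $h_1$ near the horizon was made precisely to guarantee this.
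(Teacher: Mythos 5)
Your proposal is correct and follows essentially the same route as the paper, which obtains the proposition precisely by integrating $K^p[H_1,H_2]$ over $D(\tau_1,\tau_2)$, applying Cauchy--Schwarz to the cross term in Lemma \ref{H_12_final_current}, and invoking that lemma's boundary comparison. One small refinement: the choice $h_1=5$ near the horizon is really what guarantees positivity of the flux $e_1[H_1,H_2]$ through the spacelike slices $\Sigma_\tau$ near $r=2M$ (where $T$ degenerates), while the flux of $J_5$ across $\mathcal{H}^+$ itself is automatically non-negative because all terms proportional to $g_{ab}$ are annihilated by $g(T,L)=0$ there; but this does not affect the validity of your argument.
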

By combining (\ref{H_12 hierachy}) with Theorem \ref{thm_H0} and Corollary \ref{thm_PQ}, we can follow the proof of Corollary \ref{thm_PQ} to obtain the decay estimate for $H_1$ and $H_2$.
\begin{theorem}\label{thm_H12}
\begin{align}
E^p[H_1,H_2](\tau)\leq CI\tau^{-2+p+\delta_2},
\end{align}
where
\begin{align*}
I=\bar{E}[H_1,H_2]^{\leq 2}+\bar{E}[Q]^{\leq 5}+\bar{E}[H_0]^{\leq 5+m}+\bar{E}[P]^{\leq 8+m}.
\end{align*}
\end{theorem}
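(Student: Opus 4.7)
The plan is to mirror the dyadic $r^p$-hierarchy cascade used in Theorem \ref{thm_H0}, with the inhomogeneous hierarchy (\ref{H_12 hierachy}) in place of the scalar analogue (\ref{H_0 hierarchy}). The only genuinely new ingredient is that the right-hand side of (\ref{H_12 hierachy}) depends on the coupled quantities $H_0,P,Q$; but these are already controlled by Theorem \ref{thm_H0} and Corollary \ref{thm_PQ}. Since $e^{-1}[\Phi]$ is dominated pointwise by $e^{\delta/2}[\Phi]$ in the region $r\geq 2M$, those theorems yield
\[
E^{-1}[H_0,P,Q]^{\leq s}(\tau)\leq C I\,\tau^{-2+2\delta},
\]
a rate integrable in $\tau$ since $\delta<1/2$. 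I would first commute (\ref{eqiation_H1H2}) with the Killing fields $\Gamma=\{T,\Omega_1,\Omega_2,\Omega_3\}$, which preserve the equations and commute with $D,D^\dagger$, so that the hierarchy (\ref{H_12 hierachy}) holds for every $\mathcal L_\Gamma^j(H_1,H_2)$ with error term involving $\mathcal L_\Gamma^j(H_0,P,Q)$.

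The cascade then runs in three stages. At the top of the hierarchy, $p=2-\delta$, I integrate (\ref{H_12 hierachy}) on $[0,\tau]$ and bound the Duhamel integral by the uniform bound above, producing
\[
E^{2-\delta}[H_1,H_2](\tau)\leq CI,\qquad \int_0^\infty E^{1-\delta,deg}[H_1,H_2](\tau)\,d\tau\leq CI.
\]
The mean value theorem then furnishes $\bar\tau_k\in[2^k,2^{k+1}]$ with $E^{1-\delta,deg}[H_1,H_2](\bar\tau_k)\leq CI\bar\tau_k^{-1}$; commuting once more with $\Gamma$ and applying the elliptic/$T$-regularity trick used in the proof of Theorem \ref{thm_H0} upgrades this to a non-degenerate $E^{1-\delta}$ bound on the slice $\Sigma_{\bar\tau_k}$. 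Next, I apply (\ref{H_12 hierachy}) at $p=1-\delta$ on $[\bar\tau_k,\tau]$, controlling the new Duhamel contribution by the same decaying error term, to obtain $E^{1-\delta}[H_1,H_2](\tau)\leq CI\tau^{-1+\delta}$ along with integrated control of $E^{-\delta,deg}$. A second mean value extraction and one more application at $p=\delta$ give $E^\delta[H_1,H_2](\tau)\leq CI\tau^{-2+2\delta}$; interpolating between the $p=\delta$ and $p=2-\delta$ bounds yields the assertion for every $p\in[\delta,2-\delta]$.

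The main obstacle is bookkeeping. Each dyadic transition from the degenerate bulk $E^{p-1,deg}$ to the non-degenerate $E^{p-1}$ on a slice costs one $\Gamma$-commutator, and each invocation of Theorem \ref{thm_H0} or Corollary \ref{thm_PQ} on the right-hand side imports the derivative orders those results demand. Propagating this accounting through the three cascade stages, I expect exactly a loss of two commutators for $(H_1,H_2)$, five for $Q$, $5+m$ for $H_0$, and $8+m$ for $P$, matching the quantity $I$ in the statement. No new positivity or coercivity argument beyond Lemma \ref{H_12_final_current} and the scalar-case templates is required.
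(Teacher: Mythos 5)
Your overall strategy is the paper's: feed the already-established decay of $H_0,P,Q$ into the inhomogeneous hierarchy (\ref{H_12 hierachy}) and rerun the dyadic cascade of Corollary \ref{thm_PQ} and Theorem \ref{thm_H0}. However, there is a genuine gap in how you control the coupling term, and it costs a full power of $\tau$. You bound the Duhamel contribution by integrating the \emph{slice} bound $E^{-1}[H_0,P,Q](\tau)\lesssim I\,\tau^{-2+2\delta}$. On a dyadic interval $[\bar\tau_k,\tau]$ of length $\sim\tau$ this only yields
\begin{align*}
\int_{\bar\tau_k}^{\tau}E^{-1}[H_0,P,Q](s)\,ds\;\lesssim\; I\,\tau^{-1+2\delta},
\end{align*}
whereas the bottom rung of the cascade needs this integral to be $\lesssim I\,\tau^{-2+2\delta}$: in your last step $E^{\delta}[H_1,H_2](\tau)\leq C\big(E^{\delta}[H_1,H_2](\bar\tau_k)+\int_{\bar\tau_k}^{\tau}E^{-1}[H_0,P,Q]\,ds\big)$, and with your error bound the right-hand side is only $O(\tau^{-1+2\delta})$. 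The same loss already appears at the intermediate rung: at $p=1-\delta$ you would obtain $\tau^{-1+2\delta}$, not the $\tau^{-1+\delta}$ you assert (and the theorem requires $\tau^{-1}$ there). Naive integration of the slice bound suffices only at the top rung $p=2-\delta$, where mere finiteness of the time integral is needed; below that the argument stalls one power of $\tau$ short of the statement.

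The missing ingredient is already available in the paper: Theorem \ref{thm_H0} and Corollary \ref{thm_PQ} provide not just slice decay but the \emph{integrated} bulk decay $\int_{\tau_1}^{\tau_2}E^{\delta-1}[H_0]^{\leq 2}\,d\tau\leq CI\tau_1^{-2+2\delta}$ and $\int_{\tau_1}^{\tau_2}E^{\delta/2-1}[P]^{\leq 2}\,d\tau+\int_{\tau_1}^{\tau_2}E^{\delta/2-1}[Q]^{\leq 2}\,d\tau\leq CI\tau_1^{-2+\delta}$, which gain exactly one power of $\tau$ over integrating the slice bounds. Since $e^{-1}[\Phi]\lesssim e^{\delta-1}[\Phi]$ for $r\geq 2M$, these give $\int_{\tau_1}^{\tau_2}E^{-1}[H_0,P,Q]^{\leq 2}\,d\tau\leq CI\tau_1^{-2+2\delta}$, which is what every rung of the cascade requires. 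This is also where the derivative counts in $I$ actually come from: $\bar E[Q]^{\leq 3+2}$, $\bar E[H_0]^{\leq (3+m)+2}$, $\bar E[P]^{\leq (6+m)+2}$, the ``$+2$'' being the two commutations spent converting the degenerate bulk $E^{p-1,deg}$ into nondegenerate slice energies at the two pigeonhole steps. Your bookkeeping happens to reproduce $I$, but it is not consistent with the slice-bound mechanism you describe (which would give, e.g., $\bar E[Q]^{\leq 4}$). With the integrated statements substituted for the integrated slice bounds, the rest of your cascade is the paper's proof.
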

\section{Estimate for $\ell$=1}\label{ddd}
In this section we estimate the odd solution $h$ which is supported on the angular mode $\ell=1$. In this case $H_2$ vanishes automatically and (\ref{eqiation_H1H2}) becomes a single wave equation for $H_1$. Furthermore, it is well known \cite{Zerilli, Martel-Poisson} that in this mode, any solution of (\ref{LVEE}) is the linear combination of $K_{m}$, $m=-1,0,1$ and a pure gauge solution ${}^X\pi$. The components of $K_m$ is of the form
\begin{align*}
K_m=\frac{1}{r}Z^{1m}_\alpha dx^\alpha dt+\frac{2M}{r^2}\D^{-1}Z^{1m}_\alpha dx^\alpha dr.
\end{align*}
It's straightforward to verify that $K_m$ also satisfies the harmonic gauge condition (\ref{HG}). Therefore without loss of generality we can assume $h={}^X\pi$. One direct consequence is that $P$ vanishes and that (\ref{equation_H0}) becomes a wave equation without source. Thus by (\ref{H_0 hierarchy}), we have
\begin{theorem}\label{thm_l2}
For any $p\in [\delta,2-\delta]$,
\begin{align}
E^p[H_0](\tau)\leq C\bar{E}[H_0]^{\leq 2}\tau^{-2+p+\delta}.
\end{align}
\end{theorem}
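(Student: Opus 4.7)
The plan is to exploit the fact that for the $\ell=1$ odd mode, the Regge-Wheeler quantity $P$ vanishes after the subtraction, reducing equation (\ref{equation_H0}) to a source-free wave equation. Combined with the hierarchy (\ref{H_0 hierarchy}), this will put $H_0$ on essentially the same footing as the gauge-invariant quantities $P, Q$ treated in Corollary \ref{thm_PQ}.

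First I would establish the key structural reduction: since $h$ is odd and supported on $\ell=1$, the tensorial component $H_2$ vanishes identically (there is no $\ell=1$ section of $\mathcal{L}(-2)$ in the basis lemma), and by the cited classical result, modulo the explicit linearized Kerr solutions $K_m$ any such $h$ is a pure deformation ${}^X\pi = \mathcal{L}_X g$. Since the $K_m$ already satisfy (\ref{HG}), the subtracted tensor $\hat h := h - \sum_m d_m K_m$ remains in the harmonic gauge, and without loss of generality $\hat h = {}^X\pi$ for some $X$. Because $P$ is gauge-invariant as a Regge-Wheeler quantity and vanishes on pure deformation tensors, we conclude $P[\hat h] \equiv 0$. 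Hence (\ref{equation_H0}) reduces to the homogeneous equation ${}^1\Box H_0 = V_0 H_0$, and correspondingly the error term $Err^p[H_0]$ in the combined current $K^p[H_0]$ vanishes.

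With $P = 0$, the $r^p$-hierarchy estimate (\ref{H_0 hierarchy}) simplifies to
\begin{equation*}
E^p[H_0](\tau_2) + \int_{\tau_1}^{\tau_2} E^{p-1,deg}[H_0](\tau)\, d\tau \leq C\, E^p[H_0](\tau_1)
\end{equation*}
for all $p \in [\delta, 2-\delta]$. This is precisely the hierarchy estimate (\ref{P hierarchy}) satisfied by $P$ itself. I would then run the argument of Corollary \ref{thm_PQ} verbatim on $H_0$: take $p = 2-\delta$ to obtain the uniform bound $E^{2-\delta}[H_0](\tau) \leq C E^{2-\delta}[H_0](0)$; then on each dyadic interval $[2^k, 2^{k+1}]$, apply mean value theorem with $p = 2-\delta$ to find $\bar\tau_k$ such that $E^{1-\delta}[H_0](\bar\tau_k) \lesssim \bar\tau_k^{-1} \bar{E}[H_0]^{\leq 1}$; reapply the hierarchy with $p = 1-\delta$ to propagate this decay to all $\tau$; then repeat with $p = \delta$ over a second dyadic pigeonhole step to gain the final power. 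After interpolation between endpoints this yields $E^p[H_0](\tau) \leq C \bar{E}[H_0]^{\leq 2} \tau^{-2+p+\delta}$ for $p \in [\delta, 2-\delta]$.

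The only step requiring care is the reduction in the first paragraph—one needs the classical mode decomposition to identify the three-parameter family of physical solutions $K_m$ and verify they lie in the harmonic gauge, and to confirm the gauge-invariance argument forcing $P[{}^X\pi] = 0$. Once that structural observation is in hand, the analytic content is strictly easier than the $\ell \geq 2$ case: no coupling with $P$, no loss of derivatives, and no $m$-many iterations are needed, so the final energy on the right hand side involves only $\bar{E}[H_0]^{\leq 2}$.
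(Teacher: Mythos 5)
Your proposal is correct and follows essentially the same route as the paper: the paper likewise reduces to $\hat h={}^X\pi$ via the classical $\ell=1$ decomposition (noting $H_2\equiv 0$ and that the $K_m$ satisfy the harmonic gauge), concludes $P\equiv 0$ by gauge invariance so that (\ref{equation_H0}) is source-free, and then invokes the clean hierarchy (\ref{H_0 hierarchy}) with the same dyadic pigeonhole/interpolation scheme as in Corollary \ref{thm_PQ}. The details you supply for the iteration are exactly what the paper leaves implicit in its one-line "Thus by (\ref{H_0 hierarchy})".
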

However, we couldn't estimate $H_1$ directly from (\ref{eqiation_H1H2}). The reason is that $V_1+\frac{\ell(\ell+1)-1}{r^2}$ is negative on the horizon. Fortunately, the first derivatives of $H_1$ and $H_0$ are related by equations (\ref{wavegauge}) and (\ref{def_P}).
\begin{align*}
\frac{1}{r^3}\nabla_r\Big( r^3H_1 \Big)&=\D^{-1}\nabla_t H_0,\\
\frac{1}{r^3}\nabla_t\Big( r^3H_1 \Big)&=\D\left(\nabla_r H_0-\frac{1}{r}H_0\right).
\end{align*}
Hence $|\partial_\rho \Big( r^{3}H_1 \Big)|^2\leq C(r')e[H_0]$ for $r\in [2M,r']$. Together with the fact that $H_1=H_0$ on the horizon, we have for any $r_0\in [2M,r']$,
\begin{align*}
|r_0^{3} H_1(\tau,r_0)|&\leq |(2M)^{3} H_1(\tau,2M)|+\int_{2M}^{r_0} \left|\nabla_\rho \Big( r^{3}H_1 \Big)\right| d\rho\\
                        &\leq |(2M)^{3} H_0(\tau,2M)|+C(r')\int_{2M}^{r_0} \sqrt{e[H_0]} d\rho.
\end{align*}
Therefore,
\begin{align*}                        
|H_1(\tau,r_0)|^2&\leq C(r') \left(|H_0(\tau,2M)|^2+\int_{2M}^r e[H_0] d\rho\right).
\end{align*}
Integrating over $S^2$ and applying the mean value theorem to estimate $H_0(\tau,2M)$, we have
\begin{align}\label{FTC}
\int_{S^2(\tau,r_0)}|H_1|^2 dVol_{S^2}\leq C(r')\int_{\Sigma_\tau \cap\{r\leq r'\}} e[H_0] dVol_{\Sigma_\tau}.
\end{align}
Now we can define $J^p[H_1]$ as in section 4 with $H_2\equiv 0$. Both $K^p[H_1]$ and $J^p[H_1]\cdot n_{\Sigma_\tau}$ have negative zeroth order term in compact $r$ region. From (\ref{FTC}), we can add a large multiple of $J^{p}[H_0]^{\leq 1}$ to make the boundary and the bulk terms positive. In summary, we have
\begin{proposition}
For any $\tau_2>\tau_1\geq 0$ and $p\in [\delta,2-\delta],$ we have
\begin{equation}
\begin{split}
&E^p[H_1](\tau_2)+C_*E^{p}[H_0]^{\leq 1}(\tau_2)+\int_{\tau_1}^{\tau_2}E^{p-1,deg}[H_1](\tau)d\tau\\
\leq &C\Big(E^p[H_1](\tau_1)+C_*E^{p}[H_0]^{\leq 1}(\tau_1)\Big).
\end{split}
\end{equation}
\end{proposition}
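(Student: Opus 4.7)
The plan is to replicate the vector-field construction of Section~4 for $H_1$ alone (setting $H_2\equiv 0$), and then add a sufficiently large multiple of the $H_0$-current of Section~3 (commuted once with the Killing fields $\Gamma$) to repair the sign defect near the horizon caused by the non-positivity of the effective potential $V_1+(\ell(\ell+1)-1)/r^2=r^{-2}(6-18M/r)$ in the $\ell=1$ mode. Since $h={}^X\pi$ is pure gauge, $P\equiv 0$, so (\ref{equation_H0}) reduces to a homogeneous wave equation for $H_0$ and (\ref{eqiation_H1H2}) becomes ${}^1\Box H_1=V_1H_1$. Because $\Gamma=\{T,\Omega_1,\Omega_2,\Omega_3\}$ commutes with $\Box$ and preserves both the gauge identity (\ref{wavegauge}) and the relation (\ref{def_P}), every identity used below also holds for $\mathcal{L}_\Gamma^j H_0$ and $\mathcal{L}_\Gamma^j H_1$.

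First I would define $J^p[H_1]$, $K^p[H_1]$, $e^p[H_1]$ by copying the formulas for the currents in Sections~4.1--4.5 and discarding every term involving $H_2$, $DH_1$ or $D^\dagger H_2$. Repeating the pointwise sign analysis restricted to $\ell=1$ harmonics, the only quadratic form that fails to be nonnegative is the $|H_1|^2$ coefficient on a subinterval $[2M,r']\subset[2M,3M)$, arising from the potential above. Consequently $\mathring{K}^p[H_1]$ and $J^p[H_1]\cdot n_{\Sigma_\tau}$ both pick up an indefinite contribution supported in $\{r\leq r'\}$; after Cauchy--Schwarz this contribution is bounded by a compactly supported multiple of $r^{-3}|H_1|^2$, plus the first-derivative boundary terms of $H_1$ produced by the Morawetz current.

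Next I would invoke (\ref{FTC}) and its commuted versions. Integrating (\ref{FTC}) radially over $[2M,r']$ and summing over $j\leq 1$ yields
\begin{equation*}
\int_{\Sigma_\tau\cap\{r\leq r'\}} r^{-3}\sum_{j\leq 1}|\mathcal{L}_\Gamma^j H_1|^2\, dVol_{\Sigma_\tau}\leq C(r')\, E^p[H_0]^{\leq 1}(\tau),
\end{equation*}
while the boundary first-derivative pieces $\nabla_r H_1$, $\nabla_t H_1$, $\slashed{\nabla}H_1$ are rewritten through the identities
\begin{equation*}
\tfrac{1}{r^3}\nabla_r\bigl(r^3 H_1\bigr)=\D^{-1}\nabla_t H_0,\qquad \tfrac{1}{r^3}\nabla_t\bigl(r^3 H_1\bigr)=\D\bigl(\nabla_r H_0-r^{-1}H_0\bigr),
\end{equation*}
together with the commuted (\ref{FTC}) applied to $\mathcal{L}_{\Omega_i}H_1$; the angular derivatives of $H_1$ are what force one Killing derivative of $H_0$ (rather than zero) on the right-hand side of the final statement.

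Choosing $C_*$ large — uniformly in $p\in[\delta,2-\delta]$, because $C(r')$ is fixed and the $r^p$-weight is bounded above and below on the compact set $\{r\leq r'\}$ — the combined current $J^p[H_1]+C_*\sum_{j\leq 1}J^p[\mathcal{L}_\Gamma^j H_0]$ has boundary integrand comparable to $e^p[H_1]+C_*\sum_{j\leq 1}e^p[\mathcal{L}_\Gamma^j H_0]$ and bulk integrand bounded below by a positive multiple of $e^{p-1,deg}[H_1]+C_*\sum_{j\leq 1}e^{p-1,deg}[\mathcal{L}_\Gamma^j H_0]$. Integrating its divergence over $D(\tau_1,\tau_2)$, applying the divergence theorem, and invoking the source-free version of (\ref{H_0 hierarchy}) commuted once with $\Gamma$ (no error terms, since $P\equiv 0$) produces the stated inequality. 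The main obstacle is the bookkeeping that converts the spherical bound (\ref{FTC}) into simultaneous control of both the spacetime and boundary indefinite contributions with one and the same constant $C_*$; this hinges on the compactness of the bad region $\{r\leq r'\}$ and on the fact that the gauge relations allow every first derivative of $H_1$ in that region to be replaced by first derivatives of $H_0$ without introducing any loss that depends on $p$.
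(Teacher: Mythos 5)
Your proposal is correct and follows essentially the same route as the paper: define the Section~4 currents for $H_1$ with $H_2\equiv 0$, observe that the only indefinite contribution is a zeroth-order $|H_1|^2$ term supported in a compact region where $6-\frac{18M}{r}<0$, control it through the gauge identities and the radial bound (\ref{FTC}), and add a large multiple of $J^p[H_0]^{\leq 1}$ before integrating over $D(\tau_1,\tau_2)$. One minor correction: the single Killing commutation of the $H_0$ current is forced by the photon-sphere degeneracy of the $H_0$ Morawetz bulk, which must be undone in order to absorb the non-degenerate $\int e[H_0]$ appearing on the right-hand side of (\ref{FTC}); it is not forced by angular derivatives of $H_1$, which for $\ell=1$ reduce to $r^{-2}|H_1|^2$ after integration over $S^2$ --- this does not affect the validity of your argument.
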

\begin{theorem}\label{thm_l3}
For any $p\in [\delta,2-\delta]$,
\begin{align}
E^p[H_1](\tau)\leq C\left(\bar{E}[H_1]^{\leq 2}+\bar{E}[H_0]^{\leq 3}\right)\tau^{-2+p+\delta}.
\end{align}
\end{theorem}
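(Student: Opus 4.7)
The plan is to iterate the $r^p$-hierarchy from the preceding proposition along the lines of Corollary \ref{thm_PQ} and the proof of Theorem \ref{thm_H0}. Since the hierarchy couples $E^p[H_1]$ with $C_*E^p[H_0]^{\leq 1}$ on both sides, it behaves as a closed hierarchy for the combined quantity $G^p(\tau) := E^p[H_1](\tau) + C_*E^p[H_0]^{\leq 1}(\tau)$, and Theorem \ref{thm_l2} supplies decay for the $H_0$ input whenever we pass from one level to the next.

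First I would apply the hierarchy with $p = 2-\delta$ on $[0,\tau]$ to obtain the uniform bound $G^{2-\delta}(\tau) \leq C(\bar{E}[H_1] + \bar{E}[H_0]^{\leq 1})$ together with a finite spacetime integral of $E^{1-\delta,deg}[H_1]$. Dyadic application of the same inequality on $[2^k,2^{k+1}]$ combined with the mean value theorem yields $\bar\tau_k \in [2^k,2^{k+1}]$ on which $E^{1-\delta,deg}[H_1](\bar\tau_k) \lesssim \bar\tau_k^{-1}$. To upgrade this degenerate energy to the non-degenerate $E^{1-\delta}[H_1](\bar\tau_k)$, I would commute with the Killing vector $T$, which leaves the hierarchy intact, and then use ${}^1\Box H_1 = V_1 H_1$ to algebraically recover $\nabla_L H_1$ and $\nablas H_1$ at the photon sphere from the now-controlled $\nabla_t^2 H_1$, at the cost of one derivative. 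Feeding this into the hierarchy at $p = 1-\delta$ on $[\bar\tau_k,\tau]$, together with the bound $E^{1-\delta}[H_0]^{\leq 1}(\bar\tau_k) \lesssim \bar\tau_k^{-1}\bar{E}[H_0]^{\leq 3}$ supplied by Theorem \ref{thm_l2} applied to $TH_0$, yields $E^{1-\delta}[H_1](\tau) \lesssim \tau^{-1}(\bar{E}[H_1]^{\leq 1} + \bar{E}[H_0]^{\leq 3})$.

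Next I would repeat the cycle once. Interpolating by H\"older in the radial weight between $E^{2-\delta}[H_1]$ uniformly bounded and $E^{1-\delta}[H_1](\tau) \lesssim \tau^{-1}$ gives $E^{1+\delta}[H_1](\tau) \lesssim \tau^{-1+2\delta}$. Applying the hierarchy at $p = 1+\delta$, extracting dyadic times via mean value on $E^{\delta,deg}[H_1]$, commuting once more with $T$ and removing the photon sphere degeneracy as above, and then closing with the $p = \delta$ hierarchy, the outcome is $E^\delta[H_1](\tau) \lesssim \tau^{-2+2\delta}(\bar{E}[H_1]^{\leq 2} + \bar{E}[H_0]^{\leq 3})$. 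A final interpolation between this and the uniform $E^{2-\delta}[H_1]$ bound delivers $E^p[H_1](\tau) \lesssim \tau^{-2+p+\delta}$ for every $p \in [\delta,2-\delta]$.

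The main obstacle is the removal of the photon sphere degeneracy at the dyadic times $\bar\tau_k$: the bulk term in the hierarchy is only $E^{p-1,deg}[H_1]$, whereas the next iteration requires the non-degenerate $E^{p-1}[H_1]$. The standard Killing commutation trick accomplishes this but costs one derivative per cycle, which produces the two-derivative loss on $H_1$ and three-derivative loss on $H_0$ visible in the statement. In contrast to Theorem \ref{thm_H0}, no bootstrap analogous to the $m$-parameter is needed here, because the $P$-source term is absent when $\ell=1$ and $h = {}^X\pi$, so the $H_0$ input already decays at the target rate $\tau^{-2+p+\delta}$.
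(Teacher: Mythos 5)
Your proposal is correct and takes essentially the same route as the paper: the paper states Theorem \ref{thm_l3} as a direct consequence of the coupled hierarchy in the preceding proposition, to be run through exactly the dyadic iteration of Corollary \ref{thm_PQ} (uniform bound at $p=2-\delta$, mean value extraction of dyadic times, removal of the photon-sphere degeneracy by one $T$-commutation per cycle, interpolation, and a second cycle down to $p=\delta$), with Theorem \ref{thm_l2} supplying the non-degenerate $H_0$ input at each restart since $P\equiv 0$ kills the need for any bootstrap. The only point worth flagging is the precise derivative bookkeeping on $H_0$ (whether the commuted quantities in the second cycle force $\bar{E}[H_0]^{\leq 4}$ rather than $\bar{E}[H_0]^{\leq 3}$), which the paper itself does not spell out.
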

Thus Theorem \ref{thm_main} follows form Theorem \ref{thm_H0}, \ref{thm_H12}, \ref{thm_l2} and \ref{thm_l3}.
\begin{appendix}
\section{derivation of equations}
In this appendix, we rewrite the Lichnerowicz d'Alembertian equation (\ref{LVEEG}) and the harmonic gauge condition (\ref{HG}) by using $H_0$, $H_1$, $H_2$ and the connections on $\mathcal{L}(-1)$ and $\mathcal{L}(-2)$. We first compute ${}^1\Box$ and ${}^2\Box$ in terms of ordinary derivatives.
\begin{lemma}
\begin{align}\label{box1}
{}^1\Box \Phi_\alpha dx^\alpha&= \left( -\D^{-1}\partial^2_t\Phi_\alpha +\D\partial^2_r\Phi_\alpha+\frac{2M}{r^2}\partial_r\Phi_\alpha+\frac{\mathring{\Delta}\Phi_\alpha}{r^2}-\frac{2M}{r^3}\Phi_\alpha  \right)dx^\alpha,
\end{align}
and
\begin{align}\label{box2}
{}^2\Box \Psi_{\alpha\beta} dx^\alpha dx^\beta= &\left( -\D^{-1}\partial^2_t\Psi_{\alpha\beta} +\D\partial^2_r\Psi_{\alpha\beta}-\frac{2}{r}\left( 1-\frac{3M}{r} \right)\partial_r\Psi_{\alpha\beta}\right.\\
&\left.+\frac{\mathring{\Delta}\Psi_{\alpha\beta}}{r^2}+\frac{2}{r^2}\left(1-\frac{4M}{r}\right)\Psi_{\alpha\beta}  \right)dx^\alpha.
\end{align}
\end{lemma}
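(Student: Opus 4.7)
The plan is a direct computation that exploits the warped product structure $g_M=\tilde g+r^2\mathring{\sigma}$. The conceptual point to keep in mind throughout is that ${}^1\nabla$ and ${}^2\nabla$ are the Levi-Civita connection \emph{projected} onto the respective subbundles, so after each application of $\nabla$ one must discard the components with a free quotient index. This is what distinguishes ${}^k\Box$ from the naive full-spacetime wave operator extended by zero, and it is responsible for the precise zeroth-order coefficients in (\ref{box1})-(\ref{box2}).

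First I would record the non-vanishing Christoffel symbols beyond $\tilde\Gamma$ and $\mathring\Gamma$, namely $\Gamma^r_{\alpha\beta}=-r\D\mathring{\sigma}_{\alpha\beta}$ and $\Gamma^\alpha_{r\beta}=r^{-1}\delta^\alpha_\beta$. For $\Phi=\Phi_\alpha dx^\alpha\in\Gamma(\mathcal{L}(-1))$ these give
\[
{}^1\nabla_t\Phi_\alpha=\partial_t\Phi_\alpha,\quad {}^1\nabla_r\Phi_\alpha=\partial_r\Phi_\alpha-\tfrac{1}{r}\Phi_\alpha,\quad {}^1\nabla_\beta\Phi_\alpha=\mathring{\nabla}_\beta\Phi_\alpha,
\]
with the quotient components of ${}^1\nabla\Phi$ killed by the projection. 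Then I would compute $g^{ab}\,{}^1\nabla_a{}^1\nabla_b\Phi_\alpha$ slot by slot: the $g^{tt}$-slot contributes $-\D^{-1}\partial_t^2\Phi_\alpha$ plus a $\Gamma^r_{tt}$-correction; the $g^{rr}$-slot contributes $\D\,\partial_r^2\Phi_\alpha$, with corrections from $\Gamma^r_{rr}$, $\Gamma^\beta_{r\alpha}$, and the iteration of the $-\Phi_\alpha/r$ inside ${}^1\nabla_r$; the angular slot contributes $r^{-2}\mathring{\Delta}\Phi_\alpha$ plus a correction from $\Gamma^r_{\gamma\delta}$. After collecting coefficients and using $\partial_r\D=2M/r^2$, the $\partial_r\Phi_\alpha$ terms combine to $2M/r^2$ and, crucially, the $1/r^2$ contributions cancel so that the zeroth-order coefficient is $-2M/r^3$, reproducing \eqref{box1}.

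For $\Psi\in\Gamma(\mathcal{L}(-2))$ the computation runs in parallel, but now both angular slots pick up $\Gamma^\alpha_{r\beta}$-corrections, giving ${}^2\nabla_r\Psi_{\alpha\beta}=\partial_r\Psi_{\alpha\beta}-\tfrac{2}{r}\Psi_{\alpha\beta}$. Iterating, the first-derivative correction doubles into the coefficient $-\tfrac{2}{r}(1-\tfrac{3M}{r})$, and the zeroth-order contributions from the various Christoffel pairings combine into $\tfrac{2}{r^2}(1-\tfrac{4M}{r})$; the tracelessness condition $\mathring{\sigma}^{\alpha\beta}\Psi_{\alpha\beta}=0$ eliminates any would-be $\mathring{\sigma}^{\gamma\delta}\Psi_{\gamma\delta}$ trace term coming from two $\Gamma^r_{\alpha\beta}$'s contracting.

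The main obstacle is bookkeeping, not conceptual: one must carefully track which Christoffel contractions survive the projection onto $\mathcal{L}(-k)$, and then assemble the coefficients order by order in $M/r$. No nontrivial identity is required beyond $\partial_r\D=2M/r^2$, $\mathring{\sigma}^{\gamma\delta}\mathring{\sigma}_{\gamma\delta}=2$, and the tracelessness of $\Psi$; the only subtle point is remembering the projection, since omitting it would shift each of the stated zeroth-order coefficients by a multiple of $\D/r^2$.
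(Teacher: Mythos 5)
Your proposal is correct and is essentially the computation the paper carries out: both amount to writing out the induced connections ${}^1\nabla,{}^2\nabla$ (with ${}^1\nabla_A\Phi_\alpha=\partial_A\Phi_\alpha-\frac{\partial_A r}{r}\Phi_\alpha$, ${}^2\nabla_A\Psi_{\alpha\beta}=\partial_A\Psi_{\alpha\beta}-\frac{2\partial_A r}{r}\Psi_{\alpha\beta}$, and $\Gamma^r_{\alpha\beta}=-r\D\mathring{\sigma}_{\alpha\beta}$ feeding the angular slot) and assembling $g^{ab}\nabla_a\nabla_b$; the paper merely packages the intermediate result covariantly as $\tilde{\Box}\Phi+r^{-2}\mathring{\Delta}\Phi-r^{-1}(\tilde{\Box}r)\Phi$ rather than slot by slot, and I verified your coefficients agree with (\ref{box1})--(\ref{box2}).
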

\begin{proof}
From direct computation,
\begin{align*}
{}^1\nabla_A\Phi_\alpha &=\partial_A \Phi_\alpha-\Gamma_{A\alpha}^\beta \Phi_\beta\\
&=\tilde{\nabla}_A \Phi_\alpha-\frac{\tilde{\nabla}_A r}{r}\Phi_\alpha,\\
{}^1{\nabla}_B{}^1{\nabla}_A\Phi_\alpha&=\partial_B{}^1{\nabla}_A\Phi_\alpha-\Gamma_{BA}^C{}^1{\nabla}_C\Phi_\alpha-\Gamma_{BA}^\beta{}^1{\nabla}_\beta \Phi_\alpha -\Gamma_{B\alpha}^\beta \Phi_\beta\\
                          &=\tilde{\nabla}_B \left(\tilde{\nabla}_A \Phi_\alpha-\frac{\tilde{\nabla}_A r}{r}\Phi_\alpha \right) \frac{\tilde{\nabla}_B r}{r} \left(\tilde{\nabla}_A r \Phi_\alpha-\frac{\tilde{\nabla}_A r}{r} \Phi_\alpha \right)\\
                          &=\tilde{\nabla}_B\tilde{\nabla}_A r \Phi_\alpha-\frac{\tilde{\nabla}_B r}{r}\tilde{\nabla}_A \Phi_\alpha-\frac{\nabla_A}{r}\tilde{\nabla}_B \Phi_\alpha+\frac{1}{r^2}\Big(2\tilde{\nabla}_A r \tilde{\nabla}_B r-r\tilde{\nabla}_A\tilde{\nabla}_B r \Big)\Phi_\alpha,\\
{}^1{\nabla}_\beta \Phi_\alpha&=\partial_\beta \Phi_\alpha-\Gamma_{\beta\alpha}^\gamma \Phi_\gamma\\
                               &=\mathring{\nabla}_\beta \Phi_\alpha,\\
{}^1{\nabla}_\gamma{}^1{\nabla}_\beta \Phi_\alpha&=\partial_\gamma {}^1{\nabla}_\beta \Phi_\alpha-\Gamma_{\gamma\beta}^A{}^1{\nabla}_A \Phi_\alpha-\Gamma_{\gamma\beta}^\lambda{}^1{\nabla}_\lambda \Phi_\alpha-\Gamma_{\gamma\alpha}^\lambda {}^1{\nabla}_\beta \Phi_\lambda\\
                               &=\mathring{\nabla}_\gamma\mathring{\nabla}_\beta \Phi_\alpha+r \tilde{\nabla}^A r \tilde{\nabla}_A \Phi_\alpha\mathring{\sigma}_{\gamma\beta}-\tilde{\nabla}^A r \tilde{\nabla}_A r \Phi_\alpha\mathring{\sigma}_{\gamma\beta}.
\end{align*}
Therefore,
\begin{align*}
{}^1{\Box}\Phi_\alpha&=\tilde{\Box}\Phi_\alpha-\frac{2\tilde{\nabla}^A r}{r}\tilde{\nabla}_A\Phi_\alpha+\frac{1}{r^2}\left( 2\tilde{\nabla}^A r \tilde{\nabla}_A r-r\tilde{\Box}r \right)\Phi_\alpha\\
                      &+\frac{\mathring{\Delta}\Phi_\alpha}{r^2}+\frac{2\tilde{\nabla}^A r}{r}\tilde{\nabla}_A\Phi_\alpha-\frac{2\tilde{\nabla}^A r\tilde{\nabla}_A r}{r^2} \Phi_\alpha\\
                      &= \tilde{\Box}\Phi_\alpha+\frac{\mathring{\Delta}\Phi_\alpha}{r^2} -\frac{\tilde{\Box}r}{r} \Phi_\alpha.
\end{align*}
From $\tilde{\Box} f=-\left(1-\frac{2M}{r}\right)^{-1}\partial_t^2 f+ \left(1-\frac{2M}{r}\right)\partial_r^2 f +\frac{2M}{r^2}\partial_r f$ and $\mathring{\Delta}Y_\alpha=-\ell (\ell+1)+1$, (\ref{box1}) follows. Similarly,
\begin{align*}
{}^2{\nabla}_A \Psi_{\alpha\beta}&=\tilde{\nabla}_A\Psi_{\alpha\beta}-\frac{2\tilde{\nabla}_A r}{r} \Psi_{\alpha\beta},\\
{}^2{\nabla}_B{}^2{\nabla}_A\Psi_{\alpha\beta}&=\tilde{\nabla}_B\left(\tilde{\nabla}_A\Psi_{\alpha\beta}-\frac{2\tilde{\nabla}_A r}{r}\Psi_{\alpha\beta}\right)-\frac{2\tilde{\nabla}_B r}{r} \left(\tilde{\nabla}_A\Psi_{\alpha\beta}-\frac{2\tilde{\nabla}_A r}{r} \Psi_{\alpha\beta}\right)\\
 &=\tilde{\nabla}_B\tilde{\nabla}_A \Psi_{\alpha\beta}-\frac{2\tilde{\nabla}_A r}{r}\tilde{\nabla}_B\Psi_{\alpha\beta}-\frac{2\tilde{\nabla}_B r}{r}\tilde{\nabla}_A\Psi_{\alpha\beta}+\frac{1}{r^2}\Big( 6\tilde{\nabla}_A r\tilde{\nabla}_B r-2r\tilde{\nabla}_A\tilde{\nabla}_B r \Big)\Psi_{\alpha\beta},\\
{}^2{\nabla}_\gamma \Psi_{\alpha\beta}&=\mathring{\nabla}_\gamma \Psi_{\alpha\beta},\\
{}^2{\nabla}_\lambda{}^2{\nabla}_\gamma \Psi_{\alpha\beta}&=\mathring{\nabla}_\lambda\mathring{\nabla}_\gamma \Psi_{\alpha\beta}-\Gamma^A_{\lambda\gamma}{}^2{\nabla}_A \Psi_{\alpha\beta}\\
&=\mathring{\nabla}_\lambda\mathring{\nabla}_\gamma \Psi_{\alpha\beta}+r \tilde{\nabla}^A \tilde{\nabla}_A \Psi_{\alpha\beta}\mathring{\sigma}_{\lambda\gamma}-2\tilde{\nabla}^A\tilde{\nabla}_A \Psi_{\alpha\beta}\mathring{\sigma}_{\lambda\gamma}.
\end{align*}
\begin{align*}
{}^2{\Box}\Psi_{\alpha\beta}&=\tilde{\Box}\Psi_{\alpha\beta}-\frac{4\tilde{\nabla}^A r}{r}\tilde{\nabla}_A \Psi_{\alpha\beta}+\frac{1}{r^2}\Big( 6\tilde{\nabla}^A r\tilde{\nabla}_A r-2r\tilde{\Box}r \Big)\Psi_{\alpha\beta}\\
&+\frac{\mathring{\Delta}\Psi_{\alpha\beta}}{r^2}+\frac{2}{r}\tilde{\nabla}^A r\tilde{\nabla}_A \Psi_{\alpha\beta}-\frac{4}{r^2} \tilde{\nabla}^A r \tilde{\nabla}_A r \Psi_{\alpha\beta}\\
&=\tilde{\Box}\Psi_{\alpha\beta}-\frac{2\tilde{\nabla}^A r}{r}\tilde{\nabla}_A\Psi_{\alpha\beta}+\frac{\mathring{\Delta}\Psi_{\alpha\beta}}{r^2}\\
                             &+\frac{1}{r^2}\Big( 2 \tilde{\nabla}^A r \tilde{\nabla}_A r -2r\tilde{\Box}r \Big)\Psi_{\alpha\beta}.
\end{align*}
\end{proof}
Recall that for any odd tensor $h$, we define
\begin{align*}
H_0:=h_{t\alpha}dx^\alpha,\ H_1:=\left(1-\frac{2M}{r}\right)h_{r\alpha}dx^\alpha\in\ \Gamma(\mathcal{L}(-1)),
\end{align*} 
and
\begin{align*}
H_2=-h_{\alpha\beta}dx^\alpha dx^\beta \in \ \Gamma(\mathcal{L}(-2)).
\end{align*}
Also,
\begin{align*}
P&=P_{\alpha}dx^\alpha:=r^3\epsilon^{AB}\tilde{\nabla}_B (r^{-2}h_{A\alpha})dx^\alpha,\\
Q&=Q_{\alpha\beta}dx^\alpha dx^\beta:=\Big(\tilde{\nabla}^A r\Big)\left(\mathring{\nabla}_\alpha h_{A\beta}+\mathring{\nabla}_\beta h_{A\alpha}-r^2\tilde{\nabla}_A (r^{-2}h_{\alpha\beta})\right) dx^\alpha dx^\beta,\\
\tilde{Q}&=\tilde{Q}_{\alpha\beta}dx^\alpha dx^\beta:=T^A \left(\mathring{\nabla}_\alpha h_{A\beta}+\mathring{\nabla}_\beta h_{A\alpha}-r^2\tilde{\nabla}_A (r^{-2}h_{\alpha\beta})\right) dx^\alpha dx^\beta.
\end{align*}
In terms of ordinary derivatives, we have
\begin{align*}
P_\alpha&=r\D^{-1}\partial_t H_{1,\alpha}-r\partial_r H_{0,\alpha}+2H_{0,\alpha},\\
Q_{\alpha\beta}&=\mathring{\nabla}_\alpha H_{1,\beta}+\mathring{\nabla}_\beta H_{1,\alpha}+ \D\left(\partial_r H_{2,\alpha\beta}-\frac{2}{r}H_{2,\alpha\beta}\right),\\
\tilde{Q}_{\alpha\beta}&=\mathring{\nabla}_\alpha H_{0,\beta}+\mathring{\nabla}_\beta H_{0,\alpha}+ \partial_t H_{2,\alpha\beta},
\end{align*}
and the harmonic gauge condition is
\begin{align*}
0=-\D^{-1}\partial_t H_{0,\alpha}+\partial_r H_{1,\alpha}+\frac{2}{r}H_{1,\alpha}-\frac{1}{r^2}\mathring{\nabla}^\beta H_{2,\alpha\beta}.
\end{align*}
\begin{proof}[Proof of Lemma \ref{mainequation}]
In \cite{Hung-Keller-Wang}, $\delta Ric(h)_{r\alpha}=0$ can be rewritten as 
\begin{align*}
0=-\tilde{\nabla}_t \Big(r P_\alpha\Big)+\mathring{\nabla}^\beta Q_{\beta\alpha}.
\end{align*}
From this we obtain
\begin{align*}
0=&-r^2\D^{-1}\partial^2_t H_{1,\alpha}+r^2\partial_t\partial_r H_{0,\alpha}-2r\partial_t H_{0,\alpha}+\mathring{\Delta}H_{1,\alpha}\\
  &+\mathring{\nabla}^\beta\mathring{\nabla}_\alpha H_{1,\beta}+\D \partial_r\mathring{\nabla}^\beta H_{2,\alpha\beta}-\frac{2}{r}\D \mathring{\nabla}^\beta H_{2,\alpha\beta}.
\end{align*}
Using the harmonic gauge condition to replace $\partial_t H_{0,\alpha}$, we get
\begin{align*}
0=&-r^2\D^{-1}\partial^2_t H_{1,\alpha}+r^2\D \partial^2_r H_{1,\alpha}+2M\partial_r H_{1,\alpha}+\mathring{\Delta} H_{1,\alpha}\\
  &+\left(-5+\frac{16M}{r}\right)H_{1,\alpha}+\frac{2}{r}\left(1-\frac{3M}{r}\right)\mathring{\nabla}^\beta H_{2,\alpha\beta}\\
 =&r^2 \Box H_1+\left(-5+\frac{18M}{r}\right)  H_1-\left(1-\frac{3M}{r}\right) D^\dagger H_2.
\end{align*}
Similarly, $\delta Ric(h)_{t\alpha}=0$ becomes
\begin{align*}
0=-\D \tilde{\nabla}_r \Big(rP_\alpha\Big)+\mathring{\nabla}^\beta \tilde{Q}_{\beta\alpha}.
\end{align*}
Therefore
\begin{align*}
0=&-r^2\partial_r\partial_t H_{1,\alpha}+\left(-2r+2M\D^{-1}\right) \partial_t H_{1,\alpha}+r^2\D \partial_r^2 H_{0,\alpha}\\
  &-2\D H_{0,\alpha}+\mathring{\Delta}H_{0,\alpha}+H_{0,\alpha}+\partial_t\mathring{\nabla}^\beta H_{2,\alpha\beta}.
\end{align*}
Using the harmonic gauge condition to replace $\mathring{\nabla}^\beta H_{2,\alpha\beta}$, we have
\begin{align*}
0=&-r^2\D^{-1}\partial_t^2 H_{0,\alpha}+r^2\partial^2_r H_{0,\alpha}+\mathring{\Delta}H_{0,\alpha}+\left(-1+\frac{4M}{r}\right)H_{0,\alpha}+2M\D^{-1}\partial_t H_{1,\alpha}\\
  =&-r^2\D^{-1}\partial_t^2 H_{0,\alpha}+r^2\partial^2_r H_{0,\alpha}+\mathring{\Delta}H_{0,\alpha}+2M\partial_r H_{0,\alpha}-H_{0,\alpha}+\frac{2M}{r}P_\alpha\\
  =&r^2\Box H_0-\left(1-\frac{2M}{r}\right)H_0+\frac{2M}{r}P.
\end{align*}
The equation $\delta Ric(h)_{\alpha\beta}$ becomes
\begin{align*}
0&=-\D^{-1}\tilde{\nabla}_t \tilde{Q}_{\alpha\beta}+\tilde{\nabla}_r Q_{\alpha\beta}.
\end{align*}
Replacing $\partial_t H_{0,\alpha}$ again by the harmonic gauge condition, we get
\begin{align*}
0=&-\D^{-1}\partial_t^2 H_{2,\alpha\beta}+\D\partial_r^2 H_{2,\alpha\beta}-\frac{2}{r}\left(1-\frac{3M}{r}\right)\partial_r H_{2,\alpha\beta}\\
  &+\frac{\mathring{\Delta}H_{2,\alpha\beta}}{r^2}-\frac{8M}{r^3}H_{2,\alpha\beta}-\frac{2}{r}\Big( \mathring{\nabla}_\alpha H_{1,\beta}+\mathring{\nabla}_\beta H_{1,\alpha} \Big)\\
 =&\Box H_2-\frac{2}{r^2}H_2-\frac{2}{r^2}DH_1.
\end{align*}
\end{proof}
\section{Computation}
In this appendix, we include the computation of divergence of certain basic currents. Let $\phi$ be a smooth scalar function, $V$ be radial function, and $\Box\phi-V\phi=G$. Define the stress-energy tensor by
\begin{align*}
T_{ab}[\phi]=\nabla_a\phi\nabla_b\phi-\frac{1}{2}\left(\nabla^c\phi\nabla_c\phi\right)g_{ab}.
\end{align*}
The divergence of $T_{ab}$ is
\begin{align*}
\nabla^aT_{ab}[\phi]=\Box \phi\cdot\nabla_b \phi.
\end{align*}
Since $T$ is a Killing field,
\begin{equation}
\begin{split}
\nabla^a\left( \left( T_{ab}[\phi]-\frac{1}{2}V\phi^2 g_{ab} \right)T^b \right)=&\nabla_T\phi\left(\Box\phi-V\phi\right)\\
                                                                               =&\nabla_T\phi\cdot G.
\end{split}
\end{equation}
Consider the red-shift vector $Y$ in the $(v,R,\theta,\phi)$ coordinate.
\begin{align*}
Y\Big|_{r=2M}&=-2\frac{\partial}{\partial R},\\
\nabla_Y Y\Big|_{r=2M}&=-sT-\sigma Y,
\end{align*}
for some $s,\sigma>0$ to be determined. We have at $r=2M$ and under the $(v,R,\theta,\phi)$ coordinate,
\[ \nabla^{(a}Y^{b)}=\left[ \begin{array}{clclclc} \frac{s}{2} && -\frac{\sigma}{2} && 0 && 0 \\
                                          -\frac{\sigma}{2} && \frac{1}{2M} && 0 && 0 \\
                                          0 && 0 && -\frac{1}{4M^3} && 0 \\
                                          0 && 0 && 0 && -\frac{1}{4M^3\sin^2\theta}
                   \end{array} \right],\ \nabla_a Y^a=-\sigma-\frac{2}{M}.\]
The red-shift current $J_2[\phi]$ and density $K_2[\phi]$ are defined by
\begin{align}
J_2[\phi]_{a}&:=T_{ab} Y^b- \frac{1}{2} V|\phi|^2 Y_a,\\
K_2[\phi]&:=\Div J_2[\phi].
\end{align}
On the horizon we have
\begin{equation}\label{g_redshift}
\begin{split}
 K_2[\phi]&=T_{ab}[\phi]\nabla^aY^b+\Box\phi (\nabla_Y \phi)-V\phi (\nabla_Y \phi)-\frac{1}{2}(\nabla_Y V)\phi^2-\frac{1}{2}V\phi\nabla^aY_a\\
      &=\left(\frac{s}{2}|\nabla_v\phi|^2+\frac{1}{2M}|\nabla_R\phi|^2+\frac{2}{M}\nabla_R\phi\cdot\nabla_v\phi+\frac{\sigma}{2}|\slashed{\nabla}\phi|^2\right)\\
      &+\left( \frac{1}{2}\left(\sigma+\frac{2}{M}\right)V-\frac{1}{2}(\nabla_Y V) \right)\phi^2+(\nabla_Y\phi)\cdot G.
\end{split}      
\end{equation}
For any radial function $f(r)$, let $X=f(r)\left(1-\frac{2M}{r}\right)\frac{\partial}{\partial r}$. From direct computation,

\begin{align*}
 \nabla^{(a}\left(1-\frac{2M}{r}\right)\partial_r^{b)}&=\left[ \begin{array}{clclclc} -\frac{M}{r^2\left(1-\frac{2M}{r}\right)} && 0 && 0 && 0 \\
                                          0 && \frac{M}{r^2}\left(1-\frac{2M}{r}\right) && 0 && 0 \\
                                          0 && 0 && \frac{1}{r^3}\left(1-\frac{2M}{r}\right) && 0 \\
                                          0 && 0 && 0 && \frac{1}{r^3\sin^2\theta}\left(1-\frac{2M}{r}\right)
                   \end{array} \right],\\
                   \nabla_a \left(1-\frac{2M}{r}\right)\partial_r ^a&=\frac{2M}{r^2}+\frac{2}{r}\left(1-\frac{2M}{r}\right).\end{align*}

\begin{align*}
T_{ab}\nabla^a X^b=&\left(1-\frac{2M}{r}\right)^2\frac{\partial f}{\partial r}|\nabla_{r}\phi|^2+\frac{f}{r}\left(1-\frac{3M}{r}\right)|\slashed{\nabla}\phi|^2\\
&-\frac{1}{2}\left(1-\frac{2M}{r}\right)\left( \frac{\partial f}{\partial r}+\frac{2f}{r} \right)(\nabla \phi)^2.
\end{align*}
Let $\omega^X=\left(1-\frac{2M}{r}\right)\left(\frac{\partial f}{\partial r}+\frac{2f}{r}\right)$ and define the Morawetz current
\begin{align}
J_3[\phi]_a&:=T_{ab}X^b-\frac{1}{2}V|\phi|^2X_a+\frac{1}{4}\omega^X \nabla_a |\phi|^2-\frac{1}{4}\nabla_a \omega^X |\phi|^2,\\
K_3[\phi]&:=\Div J_3[\phi].
\end{align}
Then
\begin{align*}
K_3[\phi]=&T_{ab}[\phi]\nabla^a X^b+\Box\phi\cdot\nabla_X\phi-V\phi\cdot\nabla_X\phi-\frac{1}{2}\nabla_X V\phi^2-\frac{1}{2}\nabla^a X_a V\phi^2\\
&+\frac{1}{4}\omega^X\Big(2\phi\Box\phi+2(\nabla\phi^2)\Big)-\frac{1}{4}\Box\omega^X \phi^2\\
         =&\left(1-\frac{2M}{r}\right)^2\frac{\partial f}{\partial r}|\nabla_{r}\phi|^2+\frac{f}{r}\left(1-\frac{3M}{r}\right)|\slashed{\nabla}\phi|^2-\frac{1}{2}\left(1-\frac{2M}{r}\right)\left( \frac{\partial f}{\partial r}+\frac{2f}{r} \right)(\nabla \phi)^2\\
          &+\nabla_X\phi\cdot G-\frac{1}{2}\nabla_X V\phi^2-\frac{1}{2}\nabla^a X_a V\phi^2+\frac{1}{2}\omega^X(\nabla\phi)^2+\frac{1}{2}\omega^X\phi\cdot (V\phi+G)-\frac{1}{4}\Box \omega^X \phi^2\\
          =&\left(1-\frac{2M}{r}\right)^2\frac{\partial f}{\partial r}|\nabla_{r}\phi|^2+\frac{f}{r}\left(1-\frac{3M}{r}\right)|\slashed{\nabla}\phi|^2+\Big(\nabla_X\phi+\frac{1}{2}\omega^X\phi\Big)\cdot G\\
          &+\left(-\frac{1}{4}\Box\omega^X+\frac{1}{2}\omega^X V-\frac{1}{2}\nabla_X V-\frac{1}{2}\nabla^a X_a V\right)\phi^2.
\end{align*}
Together with $\nabla^aX_a=\omega^X+\frac{2M}{r^2}f$, we have
\begin{equation}\label{g_Morawetz}
\begin{split}
K_3[\phi]&=\mathring{K}_3[\phi]+Err_3[\phi],\\
\mathring{K}_3[\phi]=&\left(1-\frac{2M}{r}\right)^2\frac{\partial f}{\partial r}|\nabla_{r}\phi|^2+\frac{f}{r}\left(1-\frac{3M}{r}\right)|\slashed{\nabla}\phi|^2\\
&+\left(-\frac{1}{4}\Box\omega^X-\frac{1}{2}f\D \frac{\partial V}{\partial r}-\frac{M}{r^2}f V \right)|\phi|^2,\\
Err_3[\phi]&=\left(\nabla_X \phi+\frac{1}{2}\omega^X \phi\right)\cdot G.
\end{split}
\end{equation}
Let $\psi=r\phi$. By direct computation,
\begin{align*}
\Box \psi =\frac{2}{r}\D\psi_r+\frac{2M}{r^3}\psi+r\Box\phi.
\end{align*}
Let 
\begin{align*}
T_{ab}[\psi]= \nabla_a\psi\nabla_b\psi-\frac{1}{2}(\nabla\psi)^2g_{ab}.
\end{align*}
Then
\begin{align*}
\nabla^a T_{ab}[\psi]&=\Box \psi \nabla_b\psi\\
                     &=\frac{2}{r}\D\psi_r \nabla_b\psi+\frac{2M}{r^3}\psi \nabla_b\psi+r\Box\phi\nabla_b\psi.
\end{align*}
\begin{align*}
J^p_a =\frac{r^p}{r^2\D}T_{ab}[\psi]L^b-\frac{Mr^p}{r^5\D}\psi^2L_a.
\end{align*}
\begin{lemma}
\begin{equation}\label{g_rp}
\begin{split}
&\nabla^a \left(\frac{r^p}{r^2\D}T_{ab}[\psi]L^b-\frac{Mr^p}{r^5\D}\psi^2L_a-\frac{1}{2}\frac{r^p}{r^2\D}V\psi^2 L_a\right)\\
&=\D^{-2}r^{p-3}\left(\frac{p}{2}\D -\frac{M}{r}\right)|\nabla_L\psi|^2+\left(1-\frac{p}{2}\right)r^{p-3}|\nablas\psi|^2\\
&+\left((3-p)Mr^{p-6}-\frac{r^p}{r^2\D}\frac{1}{r}\left(1-\frac{M}{r}\right)V-\D\partial_r\left(\frac{r^p}{2r^2 \D}V\right)\right)|\psi|^2\\
&+\frac{r^p}{r^2\D}\nabla_L\psi \cdot rG.
\end{split}
\end{equation}
\end{lemma}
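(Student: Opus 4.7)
The identity is a direct tensor calculation in the Schwarzschild exterior; the plan is to decompose the current into three pieces, apply the product rule to each, and verify that all cross terms either cancel or combine into the advertised quadratic expressions. Write $J^p_a = A_a + B_a + C_a$ with
\begin{align*}
A_a = \frac{r^p}{r^2\D}T_{ab}[\psi]L^b,\qquad B_a = -\frac{Mr^p}{r^5\D}\psi^2 L_a,\qquad C_a = -\frac{1}{2}\frac{r^p}{r^2\D}V\psi^2 L_a,
\end{align*}
and set $f = r^{p-2}\D^{-1}$ so that $A_a = fT_{ab}[\psi]L^b$, $B_a = -\tfrac{M}{r^3}f\psi^2L_a$, $C_a = -\tfrac{V}{2}f\psi^2L_a$.

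For $A_a$ the product rule gives
\begin{align*}
\nabla^a A_a = (\nabla^a f)\,T_{ab}[\psi]L^b + f\,(\Box\psi)\,\nabla_L\psi + f\,T_{ab}[\psi]\nabla^a L^b.
\end{align*}
Use the scalar identity $\Box\psi = \tfrac{2\D}{r}\partial_r\psi + \tfrac{2M}{r^3}\psi + r\Box\phi$ recorded just above the lemma, together with $\Box\phi - V\phi = G$, to rewrite $\Box\psi = \tfrac{2\D}{r}\partial_r\psi + \tfrac{2M}{r^3}\psi + V\psi + rG$. The middle term then produces the desired source contribution $\frac{r^p}{r^2\D}\nabla_L\psi\cdot rG$, plus the two cross terms $2f\tfrac{M}{r^3}\psi\nabla_L\psi$ and $fV\psi\nabla_L\psi$ which are precisely what will be cancelled by $\nabla^a B_a$ and $\nabla^a C_a$ below. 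For the remaining $(\nabla^a f)T_{ab}L^b + fT_{ab}\nabla^a L^b$ I would expand $T_{ab}[\psi]L^b = (\nabla_L\psi)\nabla_a\psi - \tfrac{1}{2}(\nabla\psi)^2 L_a$ with the null-frame identity $(\nabla\psi)^2 = -\D^{-1}(\nabla_L\psi)(\nabla_{\underline{L}}\psi) + |\nablas\psi|^2$ and $2\partial_r\psi = \D^{-1}(\nabla_L\psi - \nabla_{\underline{L}}\psi)$; using $\nabla^a f = \D f'(\partial_r)^a$ this first piece collapses to $\tfrac{f'}{2}|\nabla_L\psi|^2 - \tfrac{\D f'}{2}|\nablas\psi|^2$. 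A parallel computation of $T_{ab}\nabla^a L^b$ based on the Schwarzschild identity $\nabla^a L_a = \tfrac{2\D}{r} + \tfrac{2M}{r^2}$ and the explicit Christoffel symbols contributes only further $|\nabla_L\psi|^2$ and $|\nablas\psi|^2$ terms, with the $|\nabla_{\underline{L}}\psi|^2$ mode cancelling identically. Substituting $f' = \tfrac{r^{p-3}}{\D^2}\bigl((p-2)\D - \tfrac{2M}{r}\bigr)$ then produces the coefficients $\D^{-2}r^{p-3}\bigl(\tfrac{p}{2}\D - \tfrac{M}{r}\bigr)$ and $(1-\tfrac{p}{2})r^{p-3}$ stated in the lemma.

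For $B_a$ and $C_a$, the divergence of a generic $w(r)\psi^2 L_a$ is
\begin{align*}
\nabla^a(w\psi^2 L_a) = \D w'\psi^2 + 2w\psi\,\nabla_L\psi + w\psi^2\Bigl(\tfrac{2\D}{r} + \tfrac{2M}{r^2}\Bigr).
\end{align*}
With $w_B = -\tfrac{M}{r^3}f$ and $w_C = -\tfrac{V}{2}f$, the $2w\psi\nabla_L\psi$ terms exactly cancel the two leftover cross terms from the previous step; indeed the coefficients of the lower-order pieces of the current are precisely engineered for this purpose. The remaining $\psi^2$-contribution of $B_a$ combines $\D w_B'$ with $w_B(\tfrac{2\D}{r}+\tfrac{2M}{r^2})$ to produce $(3-p)Mr^{p-6}|\psi|^2$, while that of $C_a$ reassembles into $-\D\partial_r\bigl(\tfrac{r^p}{2r^2\D}V\bigr)|\psi|^2 - \tfrac{r^p}{r^2\D}\cdot\tfrac{1}{r}\bigl(1-\tfrac{M}{r}\bigr)V|\psi|^2$, matching the stated potential coefficient.

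The main obstacle is purely bookkeeping, and in particular the verification that $(\nabla^a f)T_{ab}L^b + fT_{ab}\nabla^a L^b$ carries no $|\nabla_{\underline{L}}\psi|^2$ mode. This is the crucial null-structure cancellation that distinguishes the $r^p$-current from a pure $T$-current and enables the hierarchy of estimates; it follows from the nullity $g(L,L)=0$ together with the explicit value of $\nabla^a L_a$ on Schwarzschild. Once this cancellation is in place, matching the surviving coefficients with the right-hand side of (\ref{g_rp}) is a routine term-by-term comparison.
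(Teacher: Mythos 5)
Your proposal is correct and follows essentially the same route as the paper's proof: split the current into its three pieces, apply the product rule using the deformation tensor of $L$, the identity $\Box(r\phi)=\tfrac{2}{r}\D\,\partial_r(r\phi)+\tfrac{2M}{r^3}(r\phi)+r\Box\phi$, and the divergence formula for $w(r)\psi^2 L_a$, with the lower-order terms engineered to cancel the $\psi\nabla_L\psi$ cross terms. The only cosmetic difference is that you treat general $p$ at once via the clean identity $T(\partial_r,L)=\tfrac{1}{2}\D^{-1}|\nabla_L\psi|^2-\tfrac{1}{2}|\nablas\psi|^2$, whereas the paper first computes the $p=0$ case and then adds the $r^p$ correction; the coefficients you record ($f'$, the $(3-p)Mr^{p-6}$ term, and the $V$-terms) all check out.
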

\begin{proof} For $p=0$,
\begin{align*}
&\nabla^a\left( \frac{1}{r^2\D}T_{ab}[\psi]L^b -\frac{M}{r^5\D}\psi^2 L_a\right)\\
                 =&I+II.
\end{align*}
\begin{align*}
I=&\frac{1}{r^2\D}\nabla^a T_{ab} L^b+\frac{\partial}{\partial r}\left(\frac{1}{r^2\D}\right)T_{ab} r^a L^b+\frac{1}{r^2\D} T_{ab}\nabla^{(a}L^{b)}\\
 =&I_1+I_2+I_3.
\end{align*}
\begin{align*}
I_1=&\frac{1}{r^2\D}\left(\frac{2}{r}\D\nabla_r \psi+\frac{2M}{r^3}\psi+r\Box\phi\right)\cdot\nabla_L\psi\\
   =&\frac{2}{r^3}\nabla_r\psi\nabla_L\psi+\frac{2M}{r^5\D}\psi\nabla_L\psi +\frac{1}{r^2\D}r\Box\phi\cdot\nabla_L\psi.
\end{align*}
\begin{align*}
I_2&=\frac{-2(r-M)}{r^4\D^2}\left( \D\nabla_r\psi \nabla_L\psi -\frac{1}{2}\D(\nabla\psi)^2\right)\\
   &=\frac{-2(r-M)}{r^4\D} \nabla_r\psi \nabla_L\psi +\frac{r-M}{r^4\D}(\nabla\psi)^2.
\end{align*}
\begin{align*}
I_3=\frac{1}{r^2\D}\left( -\frac{M}{r^2}\D^{-1}\nabla_L\psi\nabla_{\underline{L}}\psi+\frac{1}{r}\D|\nablas {\psi}|^2-\frac{r-M}{r^2}(\nabla\psi)^2 \right).
\end{align*}
In the computation of $I_3$, we used that 
\begin{align*}
\nabla^{(a}L^{b)}&=\frac{M}{r^2}\left(-\D^{-1}\partial^a_t\partial^b_t+\D\partial^a_r\partial^b_r\right)\\
                                &+\frac{1}{r^3}\D(\partial^a_\theta \partial^b_\theta+\sin\theta^{-2}\partial_\phi^a\partial_\phi^b ).
\end{align*}
Therefore,
\begin{align*}
I&=\frac{1}{r^3}|\nablas \psi|^2-\frac{2M}{r^4\D}\nabla_r\psi\nabla_L\psi+\frac{2M}{r^5\D}\psi\nabla_L\psi\\
 &-\frac{M}{r^4\D^2}\nabla_L\psi\nabla_{\underline{L}}\psi+\frac{1}{r^2\D}r\Box\phi\cdot\nabla_L\psi\\
 &=\frac{1}{r^3}|\nablas \psi|^2-\frac{M}{r^4\D^2}|\nabla_L\psi|^2+\frac{2M}{r^5\D}\psi\nabla_L\psi+\frac{1}{r^2\D}r\Box\phi\cdot\nabla_L\psi,
\end{align*}
\begin{align*}
II&=-\left[\nabla_L\left(\frac{M}{r^5\D}\right)\psi^2+\frac{2M}{r^5\D}\psi\nabla_L\psi+\frac{M}{r^5\D}\left(\frac{2r-2M}{r^2}\right)\psi^2\right]\\
  &=\frac{3M}{r^6}\psi^2-\frac{2M}{r^5\D}\psi\nabla_L\psi.
\end{align*}
Thus
\begin{align*}
I+II=\frac{1}{r^3}|\nablas \psi|^2+\frac{3M}{r^6}\psi^2-\frac{M}{r^4\D^2}|\nabla_L\psi|^2+\frac{1}{r^2\D}r\Box\phi\cdot\nabla_L\psi.
\end{align*}
For general $p$, we have
\begin{align*}
&\nabla^a\left( \frac{r^p}{r^2\D}T_{ab}[\psi]L^b -\frac{Mr^p}{r^5\D}\psi^2 L_a\right)\\
                  =&I+II. 
\end{align*}
\begin{align*}
I=r^{p-3}|\nablas \psi|^2+3Mr^{p-6}\psi^2-\frac{Mr^{p-4}}{\D^2}|\nabla_L\psi|^2+\frac{r^{p}}{r^2\D}r\Box\phi\nabla_L\psi.
\end{align*}
\begin{align*}
II&=\frac{pr^{p-1}}{r^2\D}\left( \D\nabla_r\psi \nabla_L\psi-\frac{1}{2}\D(\nabla\psi)^2 \right)-\frac{pMr^{p-1}}{r^5}\psi^2\\
  &=-\frac{p}{2}r^{p-3}|\nablas \psi|^2-pMr^{p-6}\psi^2+\frac{p}{2}r^{p-3}\D^{-1}|\nabla_L\psi|^2.
\end{align*}
Together with
\begin{align*}
&\nabla^a\left(\frac{r^p}{r^2\D}\left(-\frac{1}{2}V\psi^2 g_{ab}\right)L^b\right)\\
=&-\frac{r^p}{r^2\D} V\psi\nabla_L\psi\\
 &+\left(-\nabla_L\left(\frac{r^p}{2r^2\D} V\right)-\frac{r^p}{r^2\D}\frac{1}{r}\left(1-\frac{M}{r}\right)V\right)\psi^2,
\end{align*}
the result follows.
\end{proof}
\end{appendix}

\end{document}